\newcommand{\E}{\mathrm{e}\kern0.2pt}%
\newcommand{\D}{\mathrm{d}\kern0.2pt}%
\newcommand{\RR}{\mathrm{I\kern-0.20emR}}
\newtheorem{lemma}{Lemma}[section]
\newtheorem{theorem}{Theorem}[section]
\newtheorem*{definition}{Definition}
\newtheorem{proposition}{Proposition}[section]
\numberwithin{equation}{section}
\newcommand{\banm}{\begin{anm}}
\newcommand{\eanm}{\end{anm}}
\DeclareMathOperator*{\esssup}{ess\,sup}
\begin{document}

\title{{\bf On the Benjamin--Lighthill conjecture \\ for water waves with vorticity}}

\author{Vladimir Kozlov$^a$, Nikolay Kuznetsov$^b$ and Evgeniy Lokharu$^a$}

\date{}

\maketitle

\vspace{-10mm}

\begin{center}
$^a${\it Department of Mathematics, Link\"oping University, S--581 83 Link\"oping}\\
$^b${\it Laboratory for Mathematical Modelling of Wave Phenomena, \\ Institute for
Problems in Mechanical Engineering, \\ Russian Academy of Sciences \\ V.O., Bol'shoy
pr.\ 61, St Petersburg 199178, Russian Federation}

\vspace{1mm}

E-mail: vlkoz@mai.liu.se / V.~Kozlov; nikolay.g.kuznetsov@gmail.com /
N.~Kuznetsov;  evgeniy.lokharu@liu.se / E.~Lokharu \\

\end{center}

\begin{abstract}
We consider the nonlinear problem of steady gravity-driven waves on the free surface
of a two-dimensional flow of an incompressible fluid (say, water). The flow is
assumed to be unidirectional of finite depth and the water motion is supposed to be
rotational. Our aim is to verify the Benjamin--Lighthill conjecture for flows whose
total head (Bernoulli's constant) is close to the critical one; the latter is
determined by the vorticity distribution so that no horizontal shear flows exist
for smaller values of the total head.

Originally, the conjecture was made about irrotational wave trains in order to
describe them in terms of the parameters $Q$ (rate of flow), $R$ (total
head/Bernoulli's constant) and $S$ (flow force). Let $r$ and $s$ be dimensionless
versions of $R$ and $S$, respectively, for fixed $Q$, and let $\cal C$ be the region
in the $(r, s)$-plane whose cusped boundary $\partial \cal C$ represents all
possible uniform streams; moreover, the part of $\partial \cal C$ corresponding to
supercritical streams is included into $\cal C$, whereas the other part not. The
Benjamin--Lighthill conjecture says that (a) each wave train is represented by a
point of $\cal C$ and (b) every point of $\cal C$ corresponds to some wave train. In
2010--11, this form of the conjecture was proved by Kozlov and Kuznetsov for
irrotational waves corresponding to nearcritical values of Bernoulli's constant.

Here, we modify the Benjamin--Lighthill conjecture to adapt it for rotational waves
on unidirectional flows. Let $\omega$ be a vorticity distribution, then the
corresponding cusped region ${\cal C}_\omega$ (its boundary represents all possible
horizontal shear flows) must be truncated by the line $r = r_0$, where the constant
$r_0$ defined by $\omega$ is finite for some vorticity distributions. Under the
assumptions that $\omega$ is Lipschitz continuous and the problem's parameter $r$
attains nearcritical values, we prove the following extended version of the
conjecture. Namely, along with the assertions (a) and (b) formulated above we show
that the correspondence between wave trains and points in ${\cal C}_\omega$ is
one-to-one. Our verification of the conjecture is based on the existence and
uniqueness theorems for the problem with nearcritical values of $r$. These theorems
also yield that there are two different parametrisations for each family of waves
having their crests on a fixed vertical line.

\vspace{1mm}

\noindent {\bf Mathematics Subject Classification (2010)}\ \ 76B15 $\cdot$ 35Q35
\end{abstract}

\tableofcontents

\section{Introduction}

We consider the two-dimensional nonlinear problem describing steady waves in a
horizontal open channel of uniform rectangular cross-section. The corresponding
motion of an inviscid incompressible heavy fluid, say water, occupying the channel
is supposed to be rotational with a prescribed vorticity distribution. The reason
for considering this mathematical model is the importance of vorticity for
interaction of waves with currents (see the survey paper \cite{P} by Peregrine)
which commonly occurs in nature as is indicated by observations (see, for example,
\cite{SCJ} and references cited therein). Indeed, an interesting phenomenon
predicted in the framework of this problem is the formation of (possibly multiple)
counter-currents separated one from the other by critical layers (see
\cite{ConstStr1,KK5,Wah} and references cited therein).

However, the aim of the present article is to study the whole set of waves existing
on {\it unidirectional}\/ flows of constant depth that are close to the so-called
critical shear flow. Unlike the irrotational case when the critical uniform stream
is completely defined by its rate of flow and Bernoulli's constant, the vorticity
distribution is essentially involved in the definition of the critical flow in the
rotational case (see Section 2.1). Nevertheless, it occurs that the behaviour of
waves with vorticity in some aspects is similar to that of irrotational ones. In
particular, this concerns the topic of this paper\,---\,the Benjamin--Lighthill
conjecture in the nearcritical regime. For its verification we establish that
amplitudes and slopes of nearcritical waves are small, but the method used for this
purpose is new and essentially differs from that applied in the irrotational case
and based on harmonic analysis. Our approach provides a simple interpretation of
waves existing for every nearcritical value of Bernoulli's constant. Namely, there
are two different parametrisations for each family of waves having their crests on a
fixed vertical line.

\vspace{-2mm}

\subsection{Statement of the Problem}

Let an open channel of uniform rectangular cross-section be bounded below by a
horizontal rigid bottom and let water occupying the channel be bounded above by a
free surface not touching the bottom. In appropriate Cartesian coordinates $(x,y)$,
the bottom coincides with the $x$-axis and gravity acts in the negative
$y$-direction. We use the non-dimensional variables proposed by Keady and Norbury
\cite{KN} (see also Appendix~A in \cite{KK5} for details of scaling); namely,
lengths and velocities are scaled to $(Q^2/g)^{1/3}$ and $(Qg)^{1/3}$ respectively.
Here $Q$ and $g$ are the dimensional quantities for the rate of flow and the gravity
acceleration respectively, whereas $(Q^2/g)^{1/3}$ is the depth of the critical
uniform stream in the irrotational case.

The steady water motion is supposed to be two-dimensional and rotational; the
surface tension is neglected on the free surface of the water, where the pressure is
constant. These assumptions and the fact that water is incompressible allow us to
seek the velocity field in the form $(\psi_y, -\psi_x)$, where $\psi (x,y)$ is
referred to as the {\it stream function}. The vorticity distribution $\omega$ is
supposed to be a prescribed Lipschitz function depending on $\psi$.

We choose the frame of reference so that the velocity field is time-independent as
well as the unknown free-surface profile. The latter is assumed to be the graph of
$y = \eta (x)$, $x \in \Bbb R$, where $\eta$ is a positive continuous function, and
so the longitudinal section of the water domain is $D = \{ x \in \Bbb R , \ 0 < y <
\eta (x) \}$. The following free-boundary problem for $\psi$ and $\eta$ which
describes all kinds of waves has long been known (cf. \cite{KN}):
\begin{eqnarray}
&& \psi_{xx} + \psi_{yy} + \omega (\psi) = 0, \quad (x,y) \in D ; \label{eq:lapp} \\
&& \psi (x,0) = 0, \quad x \in \Bbb R ; \label{eq:bcp} \\ && \psi (x,\eta (x)) = 1,
\quad x \in \Bbb R ; \label{eq:kcp} \\ && |\nabla \psi (x,\eta (x))|^2 + 2 \eta (x) = 3
r, \quad x \in \Bbb R . \label{eq:bep}
\end{eqnarray}
In condition \eqref{eq:bep} (Bernoulli's equation), $r$ is a constant considered as
the problem's parameter and referred to as Bernoulli's constant/the total head. 

Initially, it is natural to impose rather weak assumptions on the unknown functions,
namely, that $\psi \in C^{1}_{loc} (\bar D)$ and $\eta$ is a Lipschitz function on
$\Bbb R$. This allows us to understand the boundary value problem
\eqref{eq:lapp}--\eqref{eq:kcp} in a weak sense. Then the classical Schauder
estimates are applicable because $\omega$ is a Lipschitz function. This implies
that $\psi \in C^{2, \alpha} (D)$ for every $\alpha \in (0,1)$, and so the problem
\eqref{eq:lapp}--\eqref{eq:bep} may be understood in the classical sense. Indeed,
$\nabla \psi$ is continuous up to the boundary which yields that \eqref{eq:bep} is
fulfilled in the classical sense.

Since we are going to study only unidirectional flows, it is assumed that the
horizontal component of the velocity field has the same direction, say to the right,
throughout the flow. This assumption results in the following additional condition:
\begin{equation}
\psi_y (x, y) > 0 \quad \mbox{for all} \ (x, y) \in \bar D . \label{eq:uni}
\end{equation} 

In conclusion of this section, we recall that $(\psi, \eta)$ is called a Stokes-wave
solution of the problem \eqref{eq:lapp}--\eqref{eq:uni} when $\eta$ is a periodic
function with a single crest per wavelength and symmetric about vertical lines going
through crests, whereas $\psi (x, y)$ is a periodic function of $x$ and its period
is the same as that of $\eta$. Furthermore, a non-stream solution $(\psi, \eta)$ is
called a solitary-wave solution if it asymptotes some stream solution as $|x| \to
\infty$ and $\eta$ is symmetric about the vertical line going through the single
crest. The class of stream solutions is analogous to uniform irrotational streams
and is described below in Section~2.1.

\vspace{-2mm}

\subsection{Background} 

To compare the results obtained for rotational and irrotational waves, we recall
what is known in both cases about the {\it whole class of steady}\/ waves.

The first paper concerning this class in the {\it irrotational}\/ case was
\cite{BenL}. In this paper published in 1954, Benjamin and Lighthill conjectured
that the parameters $Q$, $R$, and $S$ `probably determine the wave-train uniquely'.
Here, $Q$ is the volume rate of flow per unit span, $R$ stands for the total head
(Bernoulli's constant), and $S$ is the flow force, and each of these parameters is a
constant of wave motion, that is, it does not depend on the coordinate measured
along the horizontal bottom.

In order to formulate the irrotational Benjamin--Lighthill conjecture in precise
terms we recall that both periodic and solitary waves bifurcate from uniform streams
whose depths are defined by the following equation:
\begin{equation}
\left( \frac{Q}{{\cal H}} \right)^2 + 2 g {\cal H} = 2 R , \quad {\cal H} > 0 ,
\label{dus}
\end{equation}
where $Q$ and $R$ are given and $g$ is the acceleration due to gravity. Namely, let
$Q$ be fixed, then for $R = R_c = \frac{3}{2} (Q g)^{2/3}$ there exists only one
positive root of (\ref{dus})---the double root ${\cal H}_c = (Q^2/g)^{1/3}$. The
corresponding uniform stream is called {\it critical} because for $R < R_c$ there
are no positive roots at all, whereas for $R > R_c$ the equation has two positive
roots ${\cal H}_-$ and ${\cal H}_+$ such that ${\cal H}_- < {\cal H}_c < {\cal
H}_+$. The uniform stream whose depth is equal to ${\cal H}_-$ (${\cal H}_+$) is
called {\it supercritical} ({\it subcritical}, respectively).

Using $R_c$ and $S_c = \frac{3}{2} (Q^4 g)^{1/3}$ (the critical values of $R$ and
$S$, respectively), two non-dimensional characteristics of flows supporting waves
are defined as follows:
\[ r = R / R_c \quad \mbox{and} \quad s = S / S_c .
\]
We recall that $r$ is usually considered as a given parameter in the problem of
steady waves, whereas $s$ depends on its solution as well as on $r$ itself; see, for
example, formula (2.4) in \cite{Ben}. According to it, for $r > 1$ we have the
following. The value of $s$ corresponding to a supercritical stream is equal to $s_-
(r) = (2 + \nu_-) / (3\nu_-^{1/3})$, whereas
\begin{equation}
s_+ (r) = \frac{1}{3 \nu_+^{1/3}} \left[ 2 + \nu_+ + \frac{1}{8} \left( \frac{{\cal H}_+}
{{\cal H}_-} - 1 \right) \left( 3 - \sqrt{1 + 8 \nu_+} \right)^2 \right]
\label{eq:s+}
\end{equation}
corresponds to a subcritical one. Here, $\nu_-  > 1$ $(\nu_+ < 1)$ is the larger
(smaller, respectively) positive root of the following equation:
\[ r = \frac{1 + 2 \nu}{3 \nu^{2/3}} \, . 
\]
It is worth mentioning that $\nu_-$ $(\nu_+)$ is the Froude number squared of
the supercritical (subcritical, respectively) uniform stream. Both $s_- (r)$ and 
$s_+ (r)$ monotonically increase with $r$, and $s_+ (r) > s_- (r)$ for all $r > 1$, 
thus defining the cuspidal region
\[ {\cal C} = \{ (r,s): 1 \leq r, \ s_- (r) \leq s < s_+ (r) \}  \, ,
\]
see Figure 1 (a scaled version of Figure~2 in \cite{BenL}). Here, the pairs $(r,
s_- (r))$ corresponding to supercritical uniform streams are included because they
also represent solitary-wave disturbances of these streams when these disturbances
exist. In terms of ${\cal C}$ the Benjamin--Lighthill conjecture is as follows:

\vspace{1mm}

(a) Every steady wave train is represented by a point in ${\cal C}$.

(b) To every point in ${\cal C}$ corresponds some steady wave train.

(c) The correspondence between wave trains and points in ${\cal C}$ is
one-to-one.

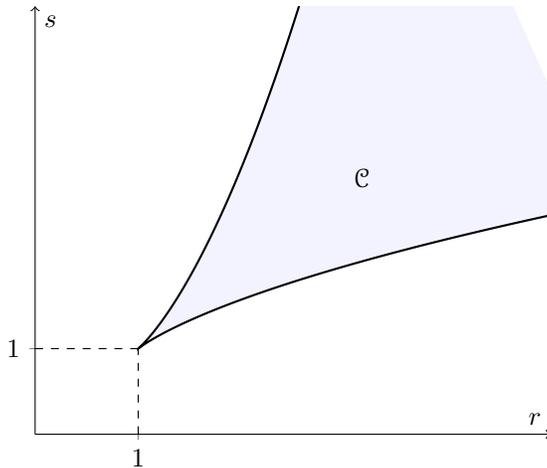
\begin{figure} \label{Pic2}
\centering
\begin{tikzpicture}
\begin{axis}[axis lines = middle, inner axis line style={->}, smooth,xlabel = $r$, 
ylabel =$s$,xmin=0,xmax=5, ymin=0,ymax=5, xtick={0,1},ytick={0,1},
xticklabels={0,$1$},yticklabels={0,$1$}]
\addplot[thick, variable=\t, 
domain=0.2:4,samples=400, fill=blue,fill opacity=0.05] ({1/3*(2/t + t^2)}, 
{1/3*(2*t + 1/t^2)});

\addplot[dashed] coordinates {(0, 1)(1, 1) };
\addplot[dashed] coordinates {(1, 0)(1, 1)};

\addplot [black, nodes near coords=${\cal C}$,every node near
coord/.style={anchor=180}] coordinates {( 3, 3)};

\end{axis}
\end{tikzpicture}
\vspace{-4mm} 
\caption{Possible values of $(r,s)$ representing steady waves
according to \cite{BenL}.} \vspace{-2mm}
\end{figure}

In \cite{BenL} (see also \cite{Ben}), it was demonstrated that this conjecture
is true in the framework of a one-dimensional approximate theory derived under
the assumption that both $r-1$ and $s-1$ are small, that is, waves are long and
of small amplitude.

Since no steady waves other than Stokes and solitary waves had been known when the
conjecture was proposed, first results confirming assertion (a) were obtained for
Stokes waves; namely, Keady and Norbury \cite{KN} (see also \cite{Ben}) proved that
Stokes waves are represented by inner points of ${\cal C}$. On the other hand,
Ovsyannikov \cite{O} supposed that $(r, s_- (r))$ uniquely determines a solitary
wave, but Plotnikov \cite{P} disproved this conjecture. He established that there
are values of $r$ for which at least two geometrically distinct solitary-wave
profiles exist. Thus, assertion (c) of the Benjamin--Lighthill conjecture does not
hold in its general form. It is worth mentioning that as early as 1974 Plotnikov's
result had been found numerically by Longuet-Higgins and Fenton \cite{LHF}.

Further results confirming the conjecture are as follows. Amick and Toland \cite{AT}
showed that $r > 1$ for solitary waves (another proof of this fact was given by
McLeod \cite{Mc}). Subsequently, this fact was proved in \cite{KK0} for all steady
waves irrespective of the type (Stokes, solitary, whatever). On the other hand,
numerical results obtained by Cokelet \cite{Cok} show that assertion (b) is not
true, at least for Stokes waves. There is a bound dividing ${\cal C}$ so that {\it
all}\/ Stokes-wave solutions lie to the left of it. This bound is formed by points
corresponding to waves that have greatest total head and flow force for a given
wavelength; the corresponding line crosses the lower part of $\partial {\cal C}$ at
some distance from the cusp vertex and approaches the curve \eqref{eq:s+} for large
values of $r$ (see Figure~1(a) in \cite{KN}). Other important results were obtained
by Amick and Toland \cite{AT}, who combined the existence theorems for Stokes and
solitary waves with the proof of the convergence of Stokes waves to solitary ones in
the long-wave limit.

In the nearcritical case ($r$ is close to one), the general structure of the whole
set of irrotational waves was obtained in \cite{KK2,KK1}. It is as follows: only
solitary and Stokes waves exist and they are parametrised by the depth at the crest
which varies from the depth of the subcritical uniform flow to that of the solitary
wave at its crest. This implies that assertions (a) and (b) of the conjecture are
true in this case. Moreover, this hierarchy of waves is in agreement with the
one-dimensional approximate theory investigated in \cite{BenL} as well as with the
bound obtained in \cite{Cok}.

Now, we turn to results for waves with vorticity. To the authors' knowledge, so far
only Groves and Wahl\'en \cite{GW} studied small-amplitude Stokes and solitary waves
using a unified approach based on the so-called spatial dynamics in which the
horizontal coordinate plays the role of time. Their analysis demonstrates the
existence of a continuous branch of wave solutions that consists of a solitary wave
of elevation and a family of Stokes waves.

\vspace{-2mm}

\section{Main Results}

The exact formulation of the Benjamin--Lighthill conjecture for waves with vorticity
includes the same assertions (a)--(c) as in the irrotational case. However, the
vorticity distribution must be involved in the definition of the corresponding
cuspidal region (hence this region will be denoted by ${\cal C}_\omega$ in what
follows) along with the total head $r$ and the flow force invariant $s$. The latter
is introduced below following the paper \cite{KN} by Keady and Norbury. Similarly to
the irrotational case, we define ${\cal C}_\omega$ in terms of unidirectional
horizontal shear flows which are analogous to uniform streams. Besides, {\it stream
solutions} describing these flows have more complicated structure. For example,
there are several definitions of the so-called Froude number for shear flows neither
of which is universal like that for the irrotational uniform streams (see \cite{W2},
p.~95, where these definitions are listed). Therefore, prior to the formulation of
main results we outline basic properties of stream solutions and provide a
classification of vorticity distributions based on these properties. This
classification is also important when defining the cuspidal region ${\cal
C}_\omega$.

\vspace{-2mm}

\subsection{Stream Solutions}

By a {\it stream} (shear-flow) solution we mean a pair $(u (y), \, d)$; $u$ stands
for the stream function instead of $\psi$ and the constant depth of flow $d$
replaces the wave profile $\eta$. Then problem \eqref{eq:lapp}--\eqref{eq:bep}
reduces the following one:
\begin{equation}
u'' + \omega (u) = 0 \ \ \mbox{on} \ (0, d) , \ \ \ u (0) = 0, \ \ \ u (d) = 1 , \ \
\ |u' (d)|^2 + 2 \, d = 3 \, r . \label{eq:ss1}
\end{equation}
Here, the prime symbol denotes differentiation with respect to $y$. A detailed study
of these solutions including those that describe flows with counter-currents is
given in \cite{KK}. In particular, it is shown that the set of unidirectional
solutions of the first three relations \eqref{eq:ss1} is parametrised by $\lambda =
u' (0)$ which satisfies the inequality
\[ \lambda \geq \lambda_0 = \sqrt{2 \max_{0 \leq \tau \leq 1} \Omega (\tau)}, \quad 
\mbox{where} \ \Omega(\tau) = \int_0^\tau \omega(t) \D t .
\]
This is a consequence of the following expressions for $u$ and $d$ (implicit and
explicit respectively):
\begin{equation}
y = \int_0^u \frac{\D \tau}{\sqrt{\lambda^2 - 2 \, \Omega (\tau)}}  \ \ \mbox{and} \
\ d = \int_0^1 \frac{\D \tau}{\sqrt{\lambda^2 - 2 \, \Omega (\tau)}} \, .
\label{eq:d}
\end{equation}
The function $d \ [= d (\lambda)]$ decreases strictly monotonically and tends to
zero as $\lambda \to +\infty$ (see Figure 2), whereas $d_0 = \lim_{s \to \lambda_0 +
0} d (\lambda)$ can be finite or infinite depending on the behaviour of $\Omega$ on
$[0, 1]$ (see below). It should be noted that the solution \eqref{eq:d} is well
defined for $\lambda = \lambda_0$ when $d_0 < +\infty$.

\begin{figure}[!ht]
\begin{minipage}{.49\textwidth}
\begin{tikzpicture}[yscale=1.50]
\draw[->] (0,0) -- (0,2.5) node[left]{$d$};
\draw[->] (0,0) -- (4.2,0) node[right]{$\lambda$};
\draw [thick] (0.5, 1.5) to[out=-90,in=178] (4,0.05);
\draw[-,dashed] (0.5,0) node[below]{$\lambda_0$} -- (0.5,1.5);
\draw[-,dashed] (0,1.5) node[left]{$d_0$} -- (0.5,1.5);
\draw[-,dashed] (1.5,0) node[below]{$\lambda_c$} -- (1.5,0.28);
\draw[-,dashed] (0,0.28) node[left]{$d_c$} -- (1.5,0.28);
\end{tikzpicture}
\vspace{-4mm}
\caption{A sketch of the graph \\ of $d (\lambda)$ in the case when $d_0 < \infty$.}
\end{minipage}
\begin{minipage}{.49\textwidth}
\begin{tikzpicture}[scale=1.50]
\draw[->] (0,0) -- (0,2.5) node[left]{$r$};
\draw[->] (0,0) -- (3,0) node[right]{$\lambda$};
\draw [thick] (0.5, 1.5) to[out=-95,in=180] (1,0.5) to[out=5,in=-120] (2.9,2.2);
\draw[-,dashed] (0.48,0) node[below]{$\lambda_0$} -- (0.48,1.5);
\draw[-,dashed] (1,0) node[below]{$\lambda_c$} -- (1,0.5);
\draw[-,dashed] (0,0.5) node[left]{$r_c$} -- (1,0.5);
\draw[-,dashed] (0,1.5) node[left]{$r_0$} -- (0.5,1.5);
\end{tikzpicture}
\vspace{-4mm} 
\caption{A sketch of the graph \\ of ${\cal R} (\lambda)$ in the
case when $d_0 < \infty$.}
\end{minipage}
\end{figure}

Furthermore, according to the last relation \eqref{eq:ss1}, the value of $\lambda$
in formulae \eqref{eq:d} must be determined from the equation
\begin{equation}
r = {\cal R} (\lambda) , \quad \mbox{where} \ {\cal R} (\lambda) = [ \lambda^2 - 2 \, 
\Omega (1) + 2 \, d (\lambda) ] / 3 . \label{eq:calR}
\end{equation}
One comes to this conclusion by substituting the first expression \eqref{eq:d} into
the last relation \eqref{eq:ss1}. It is easy to check that the function ${\cal R}
(\lambda)$ has only one minimum, say $r_c > 0$ attained at some $\lambda_c >
\lambda_0$ (see Figure 3). Hence $r_c$ is the critical value of $r$ in the same
sense as $r = 1$ is critical in the irrotational case, that is, no $\lambda$ can be
found from \eqref{eq:calR} when $\lambda < \lambda_c$. It is clear that
\[ \int_0^1 \frac{\D \tau}{[ \lambda_c^2 - 2 \Omega (\tau) ]^{3/2}} = 1 ,
\]
which, in particular, implies that $\lambda_c^2 - \lambda_0^2 \leq 1$. 

If $d_0 = +\infty$, then for every $r > r_c$ the equation \eqref{eq:calR} has two
solutions $\lambda_+ (r)$ and $\lambda_- (r)$ such that $\lambda_0 < \lambda_+ <
\lambda_c < \lambda_-$. By substituting $\lambda_+$ and $\lambda_-$ into
\eqref{eq:d}, one obtains two stream solutions, say $(u_+, d_+)$ and $(u_-, d_-)$.
However, if $d_0 < +\infty$, then both $\lambda_+$ and $\lambda_-$, and consequently
the corresponding stream solutions satisfying inequality \eqref{eq:uni} exist only
for $r \in (r_c, r_0)$, where $r_0 = {\cal R} (\lambda_0)$. It should be noted that
$d_0 > d_+ > d_c > d_-$, where $d_c = d (\lambda_c)$. The shear flows described by
$(u_+, d_+)$ and $(u_-, d_-)$ are analogous to the uniform sub- and supercritical
flows respectively existing in the irrotational case.

It is worth mentioning that the values $d_+ (r)$ and $d_- (r)$ provide important
bounds for wave profiles on unidirectional rotational flows (see \cite{KKL2} for the
proof). Namely, if $r \in (r_c, r_0)$, then the inequalities
\[ d_-  (r) < \eta (x) \ \ \mbox{for all} \ x \in \Bbb R \quad \mbox{and} \quad 
\inf \eta < d_+ (r) \leq \sup \eta 
\]
hold for all non-stream solutions. Moreover, the last inequality is strict provided
$\sup \eta$ is attained somewhere. Finally, the problem
\eqref{eq:lapp}--\eqref{eq:bep} has no solution at all if $r < r_c$ and only the
stream solution exists when $r = r_c$.

To analyse the dependence of $d_0$ on the vorticity distribution the following
three options were considered in \cite{KK}:

\begin{center}
\begin{tabular}{ | l |  p{5cm} |} 
\hline (i) & $d_0 = +\infty$ \\
\hline (ii) & $d_0< +\infty, \ u' (0) = 0, \ u'(d_0) \neq 0$ \\
\hline (iii) & $d_0< +\infty, \ u'(d_0) = 0$ \\ \hline
\end{tabular} 
\end{center}

\noindent Thus, $d_0 < +\infty$ if either $u'(0) = 0$ or $u'(d_0) = 0$, and this \
classification can be reformulated in terms of the vorticity distribution as follows:

\vspace{1mm}

\noindent \ \ (i) $\max_{0 \leq p \leq 1} \Omega (p)$ is attained either at an
inner point of $(0, 1)$ or at one (or both) of the end-points. In the latter
case, either $\omega (1) = 0$ when $\Omega (1) > \Omega (p)$ for $p \in (0, 1)$
or $\omega (0) = 0$ when $\Omega (0) > \Omega (p)$ for $p \in (0, 1)$ (or both
of these conditions hold simultaneously).

\noindent \ (ii) $\Omega (0) > \Omega (p)$ for $p \in (0, 1]$ and $\omega (0) <
0$.

\noindent (iii) $\Omega (p) < \Omega (1)$ for $p \in (0, 1)$ and $\omega (1) >
0$. Moreover, if $\Omega (1) = 0$, then $\omega (0) < 0$ and $\omega (1) > 0$ must
hold simultaneously.

\vspace{1mm}

\noindent Conditions (i)--(iii) define three disjoint sets of vorticity
distributions whose union gives the whole set of distributions that are continuous
on $[0,1]$.

\vspace{-2mm}

\subsection{Flow Force and the Cuspidal Region ${\cal C}_\omega$}
 
To the authors' knowledge, the flow force invariant $s$ for rotational waves was
introduced by Keady and Norbury \cite{KN}; up to a slight difference in the
definition of $\Omega$, their definition is as follows. Let $(\psi, \eta)$ be a
solution of the problem \eqref{eq:lapp}--\eqref{eq:bep}, then
\begin{eqnarray}
&& \!\!\!\!\!\!\!\!\!\!\!\! s = s (\psi, \eta) = \left[ r + \frac{2}{3} \Omega (1)
\right] \eta (x) \nonumber \\ && - \frac{1}{3} \Big\{ \eta^2 (x) - \int_0^{\eta (x)}
\left[ \psi_y^2 - \psi_x^2 - 2 \Omega (\psi) \right] \, \D y \Big\} . \label{eq:s}
\end{eqnarray}
Indeed, using relations \eqref{eq:lapp}--\eqref{eq:bep}, it is straightforward to
check that this expression is independent of $x$.

By $s_\pm (r)$ we denote the flow force corresponding to the stream solution 
$(u_\pm (y), d_\pm)$, and so
\[ s_\pm (r) =  \left[ r + \frac{2}{3} \Omega (1) \right] d_\pm - \frac{1}{3} \Big\{
d_\pm^2 - \int_0^{d_\pm} \left[ (u_\pm)_y^2 - 2 \Omega (u_\pm) \right] \, \D
y \Big\} . 
\]
It should be noted that the equation $s = s_+ (r)$ $(s = s_- (r))$ gives the upper
(lower respectively) curve bounding the cuspidal region ${\cal C}_\omega$.

It occurs that the curve $s = s_- (r)$ always goes to infinity being defined for all
$r \in [r_c , +\infty)$, whereas $s = s_+ (r)$ goes to infinity only when $r_0 =
+\infty$ or, what is the same, conditions (i) hold for the vorticity distribution.
Thus, the cuspidal region ${\cal C}_\omega$ is similar to ${\cal C}$ in this case.
On the contrary, if $\omega$ satisfies either of conditions (ii) and (iii), then
both $d_0$ and $r_0$ are finite, and so the curve $s = s_+ (r)$ terminates at the
point $(r_0, s_+ (r_0))$. Hence the region ${\cal C}_\omega$ is bounded under these
conditions and $\partial {\cal C}_\omega$ consists of two arcs and the segment that
connects them and lies on the line $r = r_0$.

\vspace{-2mm}

\subsection{Definitions and Formulations}

Since we are going to study waves only for nearcritical values of $r$, it is
convenient to suppose that
\begin{equation} \label{BdR}
r \leq {\cal R} ( [\lambda_c + \lambda_0] / 2 ) .
\end{equation}
Also, the following notation will be used below:
\begin{equation} \label{omegaL}
\omega_0 = \max_{[0,1]} |\omega| , \quad \omega_1 = \omega_0 + \esssup_{[0,1]} |\omega'| .
\end{equation}

Now, we specify the problem to be investigated in this paper.

\begin{definition} {\rm Let $\omega$ be a Lipschitz vorticity distribution. We say 
that $(\psi, \eta)$ belonging to $C^1_{loc}(\bar D) \times W^{1,\infty} (\Bbb R)$ is
a solution of problem $\rm P_r^M$ for some $M > 0$ and $r > r_c$ ($r_c > 0$ is the
critical value of Bernoulli's constant for $\omega$), if the following conditions
are fulfilled. The inequality $|\eta'(x)| \leq M$ is true a.e. on $\Bbb R$ and
\eqref{eq:uni} holds for $\psi$. The latter function satisfies the equation
\eqref{eq:lapp} in a weak sense, whereas the boundary conditions
\eqref{eq:bcp}--\eqref{eq:bep} are valid pointwise.}
\end{definition}

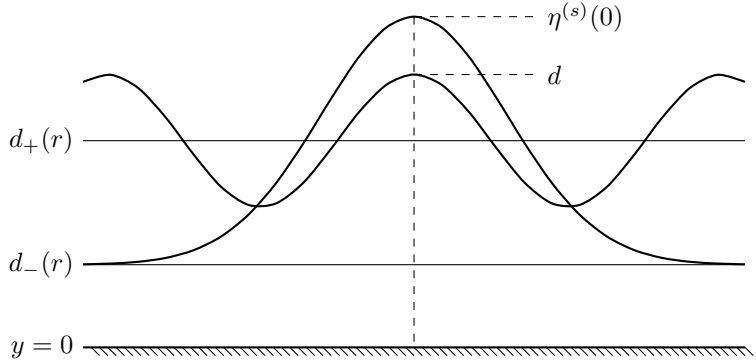
\begin{figure} \label{Pic5}
\centering
\begin{tikzpicture}[scale=1.1]
\draw[-] (-4,1.5) node[left]{$d_-(r)$} -- (4,1.5);
\draw[-] (-4,3) node[left]{$d_+(r)$} -- (4,3);

\draw[-, thick] (-4,0.5) node[left]{$y=0$} -- (4,0.5);

\foreach \x in {1,2,...,80}{
\draw[-] (-4 + 0.1*\x,0.5) -- (-4 + 0.1*\x + 0.1,0.4);
}

\draw[-,dashed] (0,4.5) -- (0,0.5);

\draw[-,dashed] (0,4.5) -- (1.5,4.5) node[right]{$\eta^{(s)}(0)$};

\draw[-,dashed] (0,3.8) -- (1.5,3.8) node[right]{$d$};

\draw[thick] plot[domain=-4:4,smooth] (\x,{1.5+3*exp(-0.4*\x*\x)});

\draw[thick] plot[domain=-4:4,smooth] (\x,{3+0.8*cos(1.69*\x r)});

\end{tikzpicture}
\caption{A sketch of the hierarchy of waves corresponding to a nearcritical
value of $r$.} \vspace{-2mm}
\end{figure}

Let us turn to the formulation of main results concerning problem $\rm P_r^M$.
Our first theorem provides a complete description of the set of waves existing
in the nearcritical regime; these are the family of Stokes waves and a solitary
wave naturally parametrised by their heights at the crest (see Figure~4).

\begin{theorem} \label{UniqThm} 
For any $M > 0$ there exists $r' \in (r_c, r_0)$ (it also depends on $\omega_1$)
such that the following assertions are true:
\begin{itemize}
\item [\sc (I)] For every $r \! \in \! (r_c, r']$ problem ${\rm P^M_r}$ has one and
only one solitary-wave solution $(\psi^{(s)}, \eta^{(s)})$ such that $\eta^{(s)}$
attains its maximum at $x = 0$ and is an even function. All other solitary-wave
solutions are horizontal translations of $(\psi^{(s)}, \eta^{(s)})$. 
\item [\sc
(II)] If $(\psi, \eta)$ is a Stokes-wave solution of problem ${\rm P^M_r}$ with $r
\in (r_c, r']$, then the following inequalities hold:
\begin{equation} \label{t2:II}
d_+ = d (\lambda_+ (r)) < \max_{x \in \Bbb R} \eta (x) < \eta^{(s)} (0) ,
\end{equation}
where $d (\lambda)$ is defined by the second formula \eqref{eq:d} and $\lambda_+
(r)$ is the smallest root of \eqref{eq:calR}. 
\item[\sc (III)] For every $r \!
\in \! (r_c, r']$ and every $d \in ( d_+, \eta^{(s)} (0) )$ problem ${\rm
P^M_r}$ has one and only one Stokes-wave solution $(\psi, \eta)$ such that
\[ \max_{ x \in \Bbb R} \eta (x) = \eta (0) = d .
\]
Thus, the family of Stokes waves with crests on the $y$-axis is parametrised by $d$.
All other Stokes-wave solutions are horizontal translations of $(\psi, \eta)$.
\item[\sc (IV)] For every $r \in (r_c, r']$ all solutions of problem ${\rm P^M_r}$
are exhausted by those described in assertions {\sc (I)} and {\sc (III)} and stream
solutions.
\end{itemize}
\end{theorem}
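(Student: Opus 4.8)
The plan is to convert the free-boundary problem $\mathrm{P}^M_r$ into a problem on a fixed strip by a partial hodograph transformation, and then to exploit the nearcritical degeneracy by a Lyapunov--Schmidt-type reduction to a scalar second-order ODE for the (rescaled) surface elevation. First I would introduce the new independent variables $(x, \psi)$ in place of $(x, y)$; since \eqref{eq:uni} guarantees $\psi_y > 0$, the map $(x,y) \mapsto (x, \psi(x,y))$ is a diffeomorphism of $\bar D$ onto the fixed strip $\bar S = \{(x,q): 0 \le q \le 1\}$. Writing $h(x, q)$ for the inverse (so $y = h(x,q)$, $\eta(x) = h(x,1)$), equations \eqref{eq:lapp}--\eqref{eq:bep} become a quasilinear elliptic equation for $h$ on $S$ with a nonlinear (Bernoulli) boundary condition on $q=1$ and $h=0$ on $q=0$; the stream solution $(u_\pm, d_\pm)$ corresponds to an $x$-independent solution $H(q; \lambda)$ obtained by inverting the first formula in \eqref{eq:d}. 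I would then decompose $h = H(q;\lambda_+) + w$, where $w$ carries the wave, and split $w$ into its average over a period (or over $\Bbb R$, in the solitary case) plus a zero-mean remainder; near $r = r_c$ the linearisation at the critical stream has a one-dimensional kernel (the neutral mode), while on the complement of the kernel it is boundedly invertible with estimates uniform for $r \in (r_c, r')$ once $r'-r_c$ is small depending on $M$ and $\omega_1$.

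The central step is the reduction: solving the infinite-dimensional part of the equation for the zero-mean remainder as a function of the amplitude parameter $t$ (the coefficient of the neutral mode) gives a smooth map whose norm is $O(t^2)$; substituting back into the projected equation yields, after the natural spatial rescaling $X = \mu x$ with $\mu^2 \sim r - r_c$, a scalar ODE of the form
\begin{equation} \label{reducedODE}
\varphi'' = a\,\varphi + b\,\varphi^2 + \text{(higher-order, uniformly small terms)} ,
\end{equation}
with $a, b$ depending on $\omega$ through $\lambda_c$ and $\Omega$, and with $b \neq 0$ (this nondegeneracy, essentially the genuine-nonlinearity condition familiar from the irrotational analysis in \cite{KK1,KK2}, is what produces the cusp and must be checked from the explicit formulae \eqref{eq:d}--\eqref{eq:calR}). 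Equation \eqref{reducedODE} is a perturbed stationary KdV-type equation: its bounded solutions on $\Bbb R$ are well understood from the phase portrait. For $r > r_c$ one has $a > 0$; the bounded orbits are exactly the equilibrium (stream solution), a one-parameter family of periodic orbits encircling the center, and the homoclinic orbit — the separatrix — bounding that family. This is precisely the hierarchy of Figure~4: the homoclinic loop is the solitary wave $(\psi^{(s)}, \eta^{(s)})$, and the nested periodic orbits are the Stokes waves. Translating the amplitude along each periodic orbit into the height at the crest, and reading off that the crest height ranges over $(d_+, \eta^{(s)}(0))$ as the periodic orbit sweeps from the equilibrium out to the separatrix, gives assertions \textsc{(II)} and \textsc{(III)}; evenness and the single crest per period follow from the reversibility $x \mapsto -x$ of the reduced equation, and uniqueness up to translation from uniqueness of the orbit through a given point of the phase plane. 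Assertion \textsc{(I)} is the homoclinic orbit (unique up to translation, symmetric), and \textsc{(IV)} follows because the Lyapunov--Schmidt reduction is exhaustive: every solution with $|\eta'| \le M$ and $r \in (r_c, r')$ lies in the neighbourhood where the reduction applies, hence corresponds to a bounded orbit of \eqref{reducedODE}, and these are exactly equilibria, periodic orbits and the separatrix.

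The main obstacle I anticipate is making the reduction uniform in $r$ down to $r = r_c$ while keeping the a priori class $\mathrm{P}^M_r$ (only a Lipschitz bound on $\eta$, no smallness assumed on the wave a priori). One must first show that every solution in $\mathrm{P}^M_r$ with $r$ nearcritical is automatically small in amplitude and slope — this is the step the authors flag as new and not based on harmonic analysis — so that it actually falls inside the domain of the local reduction; I expect this to come from the flow-force identity \eqref{eq:s} together with the bounds $d_-(r) < \eta < $ (something close to $d_+(r)$) quoted from \cite{KKL2}, squeezing $\eta$ into an $O(\sqrt{r-r_c})$ neighbourhood of the critical depth $d_c$, after which elliptic (Schauder) estimates for $h$ on the fixed strip upgrade this to smallness of $\nabla\psi$ and hence of $\eta'$. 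The secondary technical point is the degenerate endpoint $\lambda = \lambda_0$ (the case $d_0 < \infty$): one must verify that $r' < r_0$ can be chosen so that $\lambda_+(r)$ stays bounded away from $\lambda_0$, keeping all the strip-problem coefficients smooth; this is why $r'$ is required to lie in $(r_c, r_0)$ and why the bound \eqref{BdR} is imposed.
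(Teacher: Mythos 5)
Your overall architecture (partial hodograph transform, reduction of all small-amplitude bounded solutions to a planar reversible ODE of stationary-KdV type, and reading off the equilibrium/periodic-orbit/homoclinic hierarchy) is in substance the classification that the paper simply imports from Groves and Wahl\'en \cite{GW}; redoing it by a Lyapunov--Schmidt or centre-manifold reduction is legitimate but is not where the difficulty lies. The genuine gap is in the step you yourself flag as the main obstacle: proving that \emph{every} solution of ${\rm P_r^M}$ with nearcritical $r$ is automatically of small amplitude and slope, so that the reduction is exhaustive (this is exactly what assertion {\sc (IV)} needs). Your proposed mechanism does not close it. The bounds quoted from \cite{KKL2} are $d_-(r)<\eta$ and $\inf\eta<d_+(r)\leq\sup\eta$; they give no upper bound on $\sup\eta$ near $d_+(r)$ (crest heights in fact range up to $\eta^{(s)}(0)>d_+$), and the flow-force identity \eqref{eq:s} cannot ``squeeze'' $\eta$ either, because in problem ${\rm P_r^M}$ the value of $s$ is not prescribed but determined by the unknown solution; bounding $s$ between $s_-(r)$ and $s_+(r)$ is itself part of the conjecture being verified, so invoking it here is circular. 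Moreover, the ``upgrade via Schauder estimates on the strip'' presupposes that $h_p$ is bounded above, i.e.\ that $\psi_y$ is uniformly bounded away from zero (no stagnation points); in the class ${\rm P_r^M}$ this is not given and in the paper is itself \emph{deduced from} the amplitude smallness (Theorem \ref{propSmall}(b)), so this part of your argument is circular as well.

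For comparison, the paper's route to the crucial smallness is: Lemma \ref{EtaBound} (a lower bound for the depth by a comparison construction and the maximum principle); Proposition \ref{PropBoundsPsi} (a uniform bound for $|\nabla\psi|$ depending only on $\omega_0$ and $R$, via weighted $L^2$ estimates, local elliptic estimates and the subharmonic function $P=|\nabla\psi|^2+2y-2\,\Omega(\psi)-3r$); Theorem \ref{propSmall}(a), the genuinely new estimate, which projects the hodograph problem onto the fundamental eigenfunction $\phi_0$ of \eqref{SL1}--\eqref{SL3} with the weight $\E^{-\vartheta|q-q_0|}$ and uses $\mu_0(\lambda)\to0$ as $\lambda\to\lambda_c$ (Proposition \ref{PropK1}) to conclude $\sup_x|\eta-d_+(r)|\to0$ as $r\to r_c$ uniformly over ${\rm P_r^M}$; Theorem \ref{propSmall}(b) (uniform lower bound for $\psi_y$ via boundary estimates and the weak Harnack inequality); and Proposition \ref{RegProp} with Lemma \ref{lemma_small} (H\"older bounds plus interpolation giving $C^2$-smallness of $w$ and $C^1$-smallness of $f$). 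Only then are the classification of \cite{GW} and the stability estimate of Theorem \ref{StabThm} applied; the latter, rather than phase-plane uniqueness alone, is what yields the one-to-one parametrisation by the crest height. Unless you replace your flow-force argument by an a priori smallness proof of comparable strength, assertions {\sc (II)}--{\sc (IV)} remain unproved for the full class ${\rm P_r^M}$.
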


It should be mentioned that the left-hand side of inequality \eqref{t2:II} was
obtained in \cite{KKL2}; here we include it for the sake of completeness. The next
assertion shows that all waves considered in Theorem~\ref{StabThm} depend on the
depth at the crest continuously, but the continuity is understood in a certain
integral sense.

\begin{theorem} \label{StabThm}
Given any $M > 0$, then there exists $r'' \in (r_c, r_0)$ (it also depends on
$\omega_1$) such that for any two solutions of problem $\rm P_r^M$ with $r \in (r_c,
r'']$, say $(\psi^{(1)}, \eta^{(1)})$ and $(\psi^{(2)}, \eta^{(2)})$, the following
inequality
\begin{eqnarray*}
&& \int_{\RR} |\eta^{(1)} (x) - \eta^{(2)} (x)|^2 \, \E^{-\theta |x-x_0|} \D x \\ &&
\ \ \ \ \ \ \ \ \leq C \left[ | \eta^{(1)} (x_0) - \eta^{(2)} (x_0) |^2 + |
\eta_x^{(1)} (x_0) - \eta_x^{(2)} (x_0) |^2 \right]  
\end{eqnarray*}
holds for any $x_0 \in \Bbb R$. The constants $C$ and $\theta$ depend only $\omega_0$.
\end{theorem}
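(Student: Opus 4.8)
\textbf{Proof strategy for Theorem~\ref{StabThm}.}
The plan is to derive a differential inequality for the weighted $L^2$-norm of the
difference $w = \eta^{(1)} - \eta^{(2)}$ of the two free-surface profiles, and then
integrate it using a Gronwall-type argument. First I would set $\phi = \psi^{(1)} -
\psi^{(2)}$ and note that $\phi$ satisfies the linear equation $\Delta\phi +
[\omega(\psi^{(1)}) - \omega(\psi^{(2)})] = 0$ in the overlap of the two domains,
with $\phi$ vanishing on the bottom; since $\omega$ is Lipschitz, the nonlinear term
is controlled by $\omega_1$ (really just $\omega_0$, the Lipschitz constant bound)
times $|\phi|$. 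The kinematic and Bernoulli conditions on the two free surfaces,
after subtraction and flattening to a common strip via the change of variables
$y \mapsto y/\eta^{(i)}(x)$, give boundary relations expressing $w$ and $w_x$ at a
point $x_0$ in terms of traces of $\phi$ and $\nabla\phi$; the uniform-flow
condition \eqref{eq:uni} guarantees $\psi_y$ is bounded away from zero, so these
relations are invertible and $w, w_x$ are comparable to the boundary data of $\phi$.

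The heart of the argument is an energy estimate localised by the exponential weight
$e^{-\theta|x-x_0|}$. Multiplying the equation for $\phi$ by $\phi\, e^{-\theta|x-x_0|}$
and integrating over $D$ (intersected with the common region), I would integrate by
parts to obtain
\[
\int_D |\nabla\phi|^2 e^{-\theta|x-x_0|}\,\D x\,\D y
\;\le\; C\theta \int_D |\nabla\phi|\,|\phi|\, e^{-\theta|x-x_0|}\,\D x\,\D y
\;+\; C\omega_0 \int_D |\phi|^2 e^{-\theta|x-x_0|}\,\D x\,\D y
\;+\; (\text{boundary terms}),
\]
where the boundary terms on the free surface carry the factors $w(x_0), w_x(x_0)$
after one localises near $x_0$ — or, more cleanly, one keeps the boundary flux from
the free-surface conditions and uses the Bernoulli relation to convert $|\nabla\phi|^2$
traces into $|w|$ and $|w_x|$ contributions. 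Using a Poincaré inequality in the
vertical variable (valid because $\phi = 0$ on $y=0$ and the depth is bounded below
by $d_-(r) > 0$ for $r$ near $r_c$) to absorb the $|\phi|^2$ term into the gradient
term for $\theta$ small, and Young's inequality on the cross term, one arrives at a
bound of the form $\int_D |\nabla\phi|^2 e^{-\theta|x-x_0|} \le C[|w(x_0)|^2 +
|w_x(x_0)|^2]$. Finally, a trace inequality on the free surface — again uniform
because the depth and $\eta'$ are bounded — converts the left side into $\int_\RR
|w(x)|^2 e^{-\theta|x-x_0|}\,\D x$, which is the desired estimate.

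The main obstacle, and the step requiring genuine care, is the handling of the
boundary terms on the two \emph{distinct} free surfaces $y = \eta^{(1)}(x)$ and
$y = \eta^{(2)}(x)$: one must flatten both to a common reference strip, track the
Jacobians and metric coefficients (which involve $\eta^{(i)}$ and $(\eta^{(i)})_x$,
hence the $M$-dependence of $r''$), and verify that after subtraction the resulting
bilinear form is coercive with constants depending only on $\omega_0$ — this is where
the nearcritical restriction $r \le {\cal R}((\lambda_c+\lambda_0)/2)$ and the choice
of $r''$ close enough to $r_c$ enter, since they ensure the relevant quadratic form
at the free surface (coming from linearising Bernoulli's equation about the stream
solution) has the right sign. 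A secondary technical point is justifying the
integration by parts and the finiteness of all integrals over the unbounded domain
$D$; this follows from the exponential weight together with the a priori boundedness
of $\eta^{(i)}$, $(\eta^{(i)})_x$, and $\nabla\psi^{(i)}$ guaranteed by membership in
problem ${\rm P}_r^M$ and the Schauder estimates mentioned in the introduction.
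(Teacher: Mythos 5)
There is a genuine gap, and it lies at the very heart of the statement: your energy argument never explains how an integral over all of $\Bbb R$ can be controlled by the Cauchy data of $\eta^{(1)}-\eta^{(2)}$ at the \emph{single} point $x_0$. Multiplying the equation for $\phi=\psi^{(1)}-\psi^{(2)}$ by $\phi\,\E^{-\theta|x-x_0|}$ and integrating by parts produces free-surface boundary terms involving $w(x)=\eta^{(1)}(x)-\eta^{(2)}(x)$ and its derivative for \emph{every} $x$, weighted by the exponential; the step where you assert these terms ``carry the factors $w(x_0)$, $w_x(x_0)$ after one localises near $x_0$'' is exactly the point that needs a proof and is not supplied by any coercivity or trace inequality. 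At best such an estimate yields a global-by-global bound (weighted norm of the difference by a weighted norm of the surface data), which is far weaker than the theorem. Worse, the coercivity you invoke fails precisely in the nearcritical regime: the linearisation about the stream $H(p,\lambda_+(r))$ has, by Proposition \ref{PropK1}, a lowest Sturm--Liouville eigenvalue $\mu_0<0$ with $|\mu_0|\to 0$ as $r\to r_c$, so the quadratic form of the operator $\cal L$ associated with \eqref{SL1}--\eqref{SL3} is \emph{not} positive on the whole space but only on the orthogonal complement of $\phi_0$ (Lemma \ref{LemmaSL}); the nearcritical restriction does not make ``the relevant quadratic form at the free surface have the right sign''\,---\,it makes it degenerate in the $\phi_0$-direction, which is the cusp phenomenon itself. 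Consequently the proposed absorption of the zero-order and boundary terms into the gradient term cannot go through uniformly as $r\downarrow r_c$.

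The paper's proof supplies exactly the mechanism your sketch lacks. After the hodograph reformulation and passage to the first-order system \eqref{eq:v1}--\eqref{eq:v2}, the difference of two solutions is split spectrally into the one-dimensional critical mode $(\xi_0,\zeta_0)$ along $\phi_0$ and the complementary part $(\widetilde\xi,\widetilde\zeta)$. On the complement, coercivity (Lemma \ref{LemmaSL}) together with the smallness of nearcritical solutions (Lemma \ref{lemma_small}, which itself rests on Theorem \ref{propSmall} and Proposition \ref{RegProp}) shows that $(\widetilde\xi,\widetilde\zeta)$ is slaved to $\xi_0$ (Lemmas \ref{StuctEst} and \ref{StabLemma}). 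The critical mode, in turn, satisfies the scalar oscillator equation \eqref{eq:w02}, $\xi_0''+|\mu_0|\xi_0=g$ with small forcing $g$, and it is the explicit variation-of-constants representation of its solutions\,---\,not an energy inequality\,---\,that converts the bound into one by the Cauchy data $\xi_0(q_0),\xi_0'(q_0)$ at a single point (Lemma \ref{CauchyLemma1}); Lemma \ref{CauchyLemma2} then transfers these data to the surface values $\xi(q_0,1)$, $\xi_q(q_0,1)$, i.e.\ to $\eta^{(1)}-\eta^{(2)}$ and its derivative at $x_0$, giving Theorem \ref{StabThm2'} and hence Theorem \ref{StabThm}. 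Without an analogue of this splitting and of the pointwise control of the near-neutral mode, the statement cannot be reached by the route you propose.
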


Theorems \ref{UniqThm} and \ref{StabThm} are analogous to Theorems 2.1 and 2.2,
respectively, proved in \cite{KK2}. In the latter paper, the mentioned theorems
provided the basis for verification of assertions (a) and (b) of the
Benjamin--Lighthill conjecture for irrotational waves. Here, Theorems \ref{UniqThm}
and \ref{StabThm} serve for the same purpose for waves with vorticity (see Section 7
below), but our techniques used for establishing these theorems differ radically
from those applied in \cite{KK2}.

Let $r' \in (r_c, r_0)$ be the number whose existence is established in Theorem
\ref{UniqThm}, and let $r \in (r_c, r']$. By  ${\cal P}_{\rm r}^{\rm M}$ we
denote the set of all solutions to problem $\rm P_r^M$ that have the following
property. The second component $\eta$ has a crest on the $y$-axis (see
assertions {\sc (I)} and {\sc (III)} of Theorem~\ref{UniqThm} and Figure~4).
Then formula \eqref{eq:s} defines the following map:
\begin{equation}
{\cal P}_{\rm r}^{\rm M} \ni (\psi, \eta) \mapsto s \in [ s_- (r), s_+ (r) ) .
\label{eq:maps}
\end{equation}
The last main result concerns this map; namely, the set ${\cal P}_{\rm r}^{\rm M}$
is parametrised by the flow force $s$ which is an alternative parametrisation to
that considered in Theorem \ref{UniqThm}.

\begin{theorem} \label{BLThm} 
For any $M > 0$ there exists $r''' \in (r_c, r_0)$ (it also depends on $\omega_1$)
such that the map \eqref{eq:maps} is one-to-one for any $r \in (r_c, r''']$.
\end{theorem}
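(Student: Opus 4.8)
Everything can be routed through the parametrisation of ${\cal P}_r^M$ by the height at the crest that Theorem~\ref{UniqThm} provides. By assertions (I), (III) and (IV) of that theorem, for $r\in(r_c,r']$ the set ${\cal P}_r^M$ consists precisely of the even solitary-wave solution $(\psi^{(s)},\eta^{(s)})$ together with the Stokes-wave solutions having a crest on the $y$-axis — stream solutions, having constant $\eta$ and hence no crest, are excluded — and the assignment $(\psi,\eta)\mapsto d:=\max_{x\in\mathbb R}\eta(x)=\eta(0)$ is a bijection of ${\cal P}_r^M$ onto the half-open interval $(d_+(r),\eta^{(s)}(0)]$, with $\eta^{(s)}(0)$ realised by the solitary wave alone. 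Thus the map \eqref{eq:maps} is one-to-one precisely when the composed function $d\mapsto s(d)$, with $s$ given by \eqref{eq:s}, is injective on $(d_+(r),\eta^{(s)}(0)]$; and since \eqref{eq:maps} is already known to land in $[s_-(r),s_+(r))$, injectivity together with continuity re-delivers surjectivity for free.

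First I would pin down the two ends of the family. Since the solitary wave asymptotes the supercritical stream $(u_-(y),d_-)$ as $|x|\to\infty$ and $s$ is independent of $x$, passing to the limit in \eqref{eq:s} gives $s(\eta^{(s)}(0))=s_-(r)$. At the lower end, as $d\downarrow d_+(r)$ the corresponding Stokes waves converge, locally uniformly and with $\nabla\psi$, to the subcritical stream $(u_+,d_+)$ — a convergence belonging to the existence theory behind Theorem~\ref{UniqThm} — whence $s(d)\to s_+(r)$. Continuity of $d\mapsto s(d)$ on all of $(d_+(r),\eta^{(s)}(0)]$ follows from Theorem~\ref{StabThm}: taking $x_0=0$ and using that two competing elements of ${\cal P}_r^M$ both satisfy $\eta_x(0)=0$, the weighted $L^2$ distance of the profiles near the origin is bounded by $C\,|d_1-d_2|^2$; feeding this into the interior Schauder estimates for $\psi$ yields convergence of the integrand of \eqref{eq:s} near $x=0$, and so of $s$ itself.

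What remains — the heart of the proof — is strict monotonicity of $d\mapsto s(d)$ on $(d_+(r),\eta^{(s)}(0))$. Here I would lean on the rigid structure that the proof of Theorem~\ref{UniqThm} imposes on near-critical solutions: for $r$ close enough to $r_c$ they are captured by a two-dimensional reduced set on which the horizontal variable plays the role of time for a one-degree-of-freedom conservative system, with the subcritical stream a non-degenerate centre, the supercritical stream a saddle, the Stokes waves the periodic orbits nested inside the homoclinic loop, and the solitary wave that loop. On such a picture $s$, being $x$-independent, is a first integral; on the annulus swept out by the periodic orbits it is therefore a function of the essentially unique first integral of the system, and it is not constant there because $s_-(r)<s_+(r)$ for $r\in(r_c,r_0)$ (the region ${\cal C}_\omega$ being genuinely two-dimensional), so it is a strictly monotone function of it. As the crest height $d=\max\eta$ is itself strictly monotone along the nested family — which is exactly the bijectivity in assertion (III) of Theorem~\ref{UniqThm} — strict monotonicity of $s(d)$ follows; combined with the endpoint values this makes $d\mapsto s(d)$ a continuous strictly decreasing bijection of $(d_+(r),\eta^{(s)}(0)]$ onto $[s_-(r),s_+(r))$. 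One then takes $r'''$ to be the least of $r'$, $r''$ and the threshold below which the reduction is valid.

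The obstacle sits squarely at this last step, and is as much organisational as conceptual: one must verify, within the class $\{|\eta'|\le M\}$ and for merely Lipschitz $\omega$, that near-critical waves genuinely lie on such a reduced set and that $s$ is a \emph{regular} first integral along the periodic orbits, so that the crest parameter skips no orbit. If one prefers to avoid the dynamical-systems vocabulary, the alternative is to show $s'(d)\neq 0$ by hand — differentiating the weak form of \eqref{eq:lapp}--\eqref{eq:bep} in $d$ and pairing the resulting profile with an appropriate solution of the linearised stream problem — the requisite sign coming from the absence of a bounded kernel for the linearisation, a property already established on the way to uniqueness in Theorem~\ref{UniqThm}; in either route the real labour is carrying the low regularity through the Schauder bootstrap.
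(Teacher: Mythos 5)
Your overall framing coincides with the paper's: reduce the statement to strict monotonicity of the flow force $s$ along the crest-height parametrisation supplied by Theorem~\ref{UniqThm}, with the solitary wave giving $s_-(r)$ in the limit $|x|\to\infty$. But the decisive step --- strict monotonicity of $d\mapsto s(d)$ --- is not actually proved in your proposal, and you say so yourself. In your first route, the inference ``$s$ is an $x$-independent first integral, non-constant on the annulus of nested periodic orbits, hence a strictly monotone function of the orbit parameter'' is a non sequitur: constancy along each orbit together with non-constancy across the family does not exclude two distinct Stokes waves with equal flow force. To exclude it along these lines you would have to identify $s$ (up to a monotone change of variable) with the reduced Hamiltonian and rule out further equilibria inside the homoclinic loop, and also verify that \emph{every} solution of ${\rm P^M_r}$ (only $|\eta'|\le M$, Lipschitz $\omega$) is captured by the reduction; none of this is carried out, and the paper never constructs such a reduction for this step (it uses \cite{GW} only for the classification behind Theorem~\ref{UniqThm}). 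Your fallback sketch --- differentiate in $d$ and pair with a solution of the linearised problem, ``the sign coming from the absence of a bounded kernel'' --- does not yield a sign: absence of kernel gives at best nondegeneracy $s'(d)\neq 0$ pointwise if the pairing were set up, not the definite sign, and it is not the mechanism that works here.

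For comparison, the paper's argument is an explicit computation in hodograph variables along the crest line $q=0$: writing $s(t)$ through $h(q,p;t)$, differentiating in the crest height $t$, integrating by parts and using \eqref{eq:hdf1} and \eqref{eq:hdf3} (the out-of-integral terms cancel because $(0,1)$ is a crest), one gets
\[ \dot s(t)=\frac32\int_0^1\Bigl[\frac{h_{qq}}{h_p}\,\dot h\Bigr]_{q=0}\,\D p , \]
and the sign follows from two concrete facts: $h_{qq}(0,p;t)<0$, obtained by applying the maximum principle to $h_q$ on a half-period rectangle $(0,\Lambda)\times(0,1)$ using its boundary signs; and $\dot h(0,p;t)>0$, obtained from the spectral splitting together with Lemmas~\ref{StabLemma}--\ref{CauchyLemma2}, which show that the difference of two crest profiles is dominated by the positive fundamental mode $\phi_0$, so that $[h(0,p;t_1)-h(0,p;t_2)]/(t_1-t_2)\ge \phi_0(p)/[2\phi_0(1)]>0$ for $r$ near $r_c$. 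Hence $s$ strictly decreases in the crest height, which is exactly the quantitative input missing from your proposal.
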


This theorem yields that along with the parametrisation described in assertion {\sc
(III)} of Theorem~\ref{UniqThm} there is another parametrisation for the family of
waves having their crests on the $y$-axis, namely, that in terms of $s$. Theorem
\ref{BLThm} also allows us to verify assertion (c) of the Benjamin--Lighthill
conjecture for rotational waves; irrotational ones are included as a particular
case with zero vorticity.

It follows from our proofs of Theorems \ref{StabThm} and \ref{BLThm} that $r''
\geq r' = r'''$.

\vspace{-2mm}

\section{Reformulation of the Problem and \\ an Auxiliary Sturm--Liouville Problem}

In our proofs, an equivalent problem in a fixed strip $S = \Bbb R \times (0,1)$ is
used instead of the original problem \eqref{eq:lapp}--\eqref{eq:bep} in the unknown
domain $D$. The reformulation is possible for unidirectional flows and is based on
the change of variables referred to as the partial hodograph transform (see Section
3.1) proposed by Dubreil-Jacotin \cite{DJ} in 1934. In Sections 3.2 and 3.3, we
investigate an auxiliary problem related to the linearized reformulated problem.
This Sturm--Liouville problem was considered, for example, in \cite{ConstStr1}, but
the estimates of its eigenvalues and eigenfunctions (see Proposition \ref{PropK1} in
Section 3.3), that are necessary for proving our main results, were not obtained so
far.

\vspace{-3.4mm}

\subsection{Partial Hodograph Transform}

In view of the boundary conditions \eqref{eq:bcp}, \eqref{eq:kcp} and the inequality
\eqref{eq:uni}, putting $q = x$  and $p = \psi (x,y)$, we define a mapping
\[ D \ni (x,y) \mapsto (p,q) \in S = \Bbb R \times (0,1) .
\]
Let us treat the pair $(q, p)$ as independent variables in $S$ and consider $y$ as
the new unknown function for which $h (q, p)$ is the standard notation. A
straightforward calculation shows that
\[ h_q = - \frac{\psi_x}{\psi_y} \quad \mbox{and} \quad h_p = \frac{1}{\psi_y} ,
\]
thus yielding that the problem \eqref{eq:lapp}--\eqref{eq:bep} takes the following
form
\begin{eqnarray}
&& \left( \frac{h_q}{h_p} \right)_q - \frac{1}{2} \left( \frac{1+h_q^2}{h_p^2}
\right)_p - \omega(p) = 0, \quad (q, p) \in S; \label{eq:hdf1} \\ && h(0,q) =
0,\quad q \in \Bbb R ; \label{eq:hdf2} \\ && \frac{1 + h_q^2}{h_p^2} = 3r - 2h,
\quad p = 1, \ q \in \Bbb R  \label{eq:hdf3}
\end{eqnarray}
in terms of new variables. On the other hand, using the formulae
\begin{equation} \label{PsiHrel}
\psi_x = - \frac{h_q}{h_p}  \quad \mbox{and} \quad \psi_y = \frac{1}{h_p} ,
\end{equation}
one recovers the gradient of $\psi$; that is, the velocity field $(\psi_y, -\psi_x)$
in $\bar D$. Besides, the equality $\eta (x) = h (x, 1)$, $x \in \Bbb R$ gives the
free surface profile.

Let us emphasise two main advantages of the relations \eqref{eq:hdf1}--\eqref{eq:hdf3} 
comparing with \eqref{eq:lapp}--\eqref{eq:bep} (to which they are equivalent under
reasonable smoothness assumptions; see \cite{ConstStr1} for a detailed account).
There is only one unknown function $h$ and it is defined on the fixed strip $\bar S$.

In terms of new variables, a stream solution is a single function, say $H
(p,\lambda)$, which is as follows:
\begin{equation} \label{Stream}
H (p, \lambda) = \int_0^p \frac{\D \tau}{\sqrt{\lambda^2 - 2  \, \Omega (\tau)}} .
\end{equation}
Here $\lambda \in (\lambda_c , \lambda_0]$ is the same parameter as in \eqref{eq:d}
and \eqref{eq:calR}. It is clear that $d_+ (r) = H (1, \lambda_+ (r))$.

\subsection{Auxiliary Sturm--Liouville Problem}

It is straightforward to check that the equation obtained by linearization of
\eqref{eq:hdf1} near the stream solution $H$ involves the operator $- \partial_p
\left( H_p^{-3} \partial_p \right)$. This is the reason to consider the following
Sturm--Liouville problem:
\begin{eqnarray}
- \left( \frac{\phi_p}{H_p^3} \right)_p \!\!\!\! &=& \!\!\!\! \mu \frac{\phi}{H_p} ,
\quad p \in (0,1) ; \label{SL1} \\ \phi_p(1) \!\!\!\! &=& \!\!\!\! H_p^3(1) \phi(1)
; \label{SL2}  \\ \phi(0) \!\!\!\! &=& \!\!\!\! 0 . \label{SL3}
\end{eqnarray}
Here $\mu$ is the spectral parameter, but one has to keep in mind that the
eigenvalues and eigenfunctions of this problem depend also on $\lambda$ because $H$
depends on it. In our investigation of the problem \eqref{eq:hdf1}--\eqref{eq:hdf3},
the problem \eqref{SL1}--\eqref{SL3} plays an essential role. Let us denote by $\{
\mu_k \}_{k=0}^{+\infty}$ the sequence of its eigenvalues all of which are simple,
that is,
\[ \mu_0 < \mu_1 < \dots < \mu_n < \dots ;
\]
here $\mu_0$ may be either positive or negative.

\subsubsection{A Preliminary Estimate of $\mu_1$}

Let 
\[\frak M = \max_{p \in [0,1]} H_p = (\lambda^2 - \lambda_0^2)^{-1/2} \ \mbox{and}
\ \frak m = \min_{p \in [0,1]} H_p = \big[ \lambda^2 - 2 \min_{\tau \in [0,1]}
\Omega (\tau) \big]^{-1/2} , 
\]
then
\begin{equation} \label{KK10}
\mu_1 \geq \pi^2 \frac{\frak m}{\frak M^3} .
\end{equation}
Indeed, it is well known that the smallest eigenvalue of the operator $- \D^2 / \D p^2$ on
$(0, 1)$ with the Dirichlet boundary conditions is
\[ \inf_{\phi \in W^{1,2}_0 (0,1)} \frac{\int_0^1 \phi_p^2 \D p}{\int_0^1 \phi^2 \D p}
= \pi^2 .
\]
Since the fundamental eigenfunction $\phi_0$ of the problem \eqref{SL1}--\eqref{SL3}
is non-negative irrespective of the sign of the corresponding eigenvalue $\mu_0$,
the eigenfunction $\phi_1$ has exactly one zero, say $p_* \in (0,1)$. Therefore, we
have
\[ \pi^2 \leq p_*^2 \frac{\int_0^{p_*} \phi_{1 p}^2 \D p}{\int_0^{p_*} \phi_{1 p}^2 
\D p} \leq \frac{\frak M^3}{\frak m} \frac{\int_0^{p_*} \phi_{1 p}^2 H_p^{-3} \D
p}{\int_0^{p_*} \phi_1^2 H_p^{-1} \D p} \leq \frac{\frak M^3}{\frak m} \mu_1 ,
\]
which yields inequality \eqref{KK10}.

\subsubsection{Properties of $\mu_0$ and $\phi_0$}

Let us consider the initial value problem
\begin{equation} \label{Vdisp}
- (H_p^{-3} V_p)_p = \mu H_p^{-1} V , \quad V (0) = 0 , \quad V_p (0) = 1 ,
\end{equation}
depending on the parameters $\mu$ and $\lambda$ (the latter is involved through
$H$), and so we will write $V (p; \lambda, \mu)$ when necessary. It should be noted
that $V$ is a monotone function of $p \in [0,1]$, and it analytically depends on
$\lambda > \lambda_0$ and $\mu \in \Bbb R$. In terms of $V$, the eigenvalue $\mu_0 =
\mu_0 (\lambda)$ of the problem (\ref{SL1})--(\ref{SL3}) is equal to the least root
of the equation $\sigma (\lambda,\mu) = 0$, where
\[ \sigma (\lambda,\mu) = [ H_p (1, \lambda) ]^{-3} V_p (1;\lambda,\mu) - 
V (1;\lambda,\mu) ,
\]
which depends on $\lambda > \lambda_0$ and $\mu \in \Bbb R$ analytically. Moreover,
the eigenfunction $\phi_0$ corresponding to $\mu_0$ is equal to $V$ solving
\eqref{Vdisp} with $\mu = \mu_0$.

In order to find the derivative $\sigma_\mu$, it is convenient to differentiate the
first equality \eqref{Vdisp} with respect to $\mu$, then multiply the result by $V$
and integrate over $(0,1)$. Then we obtain after integration by parts
\begin{equation} \label{KK4}
\sigma_\mu (\lambda, \mu) = \frac{- 1}{V(1; \lambda,\mu)} \int_0^1 \frac{V^2 (p;
\lambda,\mu)} {H_p (p, \lambda)} \D p \, .
\end{equation}
Hence $\sigma_\mu$ is always negative which implies that $\sigma (\lambda,\mu) > 0$
when $\mu < \mu_0 (\lambda)$.

Furthermore, if $\mu = 0$, then $V (p;\lambda,0) = - \lambda^2 \partial_{\lambda} 
H (p; \lambda)$, and so
\begin{equation} \label{KK6}
\sigma (\lambda,0) = \lambda^2 \partial_{\lambda} \Big[ \frac{1}{2 H_p^2 (1;\lambda)}
+ H (1;\lambda) \Big] = \frac{3\lambda^2}{2} {\cal R}' (\lambda) ,
\end{equation}
where the last equality is a consequence of the definitions of $H$ and ${\cal R}$
(see \eqref{Stream} and \eqref{eq:calR} respectively). Since ${\cal R}' (\lambda)$
is negative for $\lambda < \lambda_c$ and positive for $\lambda > \lambda_c$ (see
Figure 2), it follows from \eqref{KK6} and \eqref{KK4} that $\mu_0 (\lambda)$ is
positive (negative) for $\lambda > \lambda_c$ ($\lambda < \lambda_c$ respectively)
and $\mu_0 (\lambda_c) = 0$.

Let us estimate $|\mu_0 (\lambda)|$ for $\lambda < \lambda_c$ or, what is the same,
$\mu_0 (\lambda) < 0$. We have
\[ \mu_0 = \min_{v \in W^{1,2}_0 (0,1)} \frac{\int_0^1 v^2 H_p^{-3} \D p - 
v^2 (1)}{\int_0^1 v^2 H_p^{-1} \D p} \geq \min_{v \in W^{1,2}_0 (0,1)} \frac{\frak
M^{-3} \int_0^1 v^2 \D p - v^2 (1)}{\frak m^{-1} \int_0^1 v^2 \D p} \, ,
\]
where $\frak M$ and $\frak m$ are the same as in \eqref{KK10}. To find the last
minimum one has to find the fundamental eigenvalue of the problem that has the same
form as (\ref{SL1})--(\ref{SL3}), but its constant coefficients are obtained by
changing $H_p$ to $\frak M$. Then the minimum is equal to $- \kappa^2 \frak m \frak M^{-3}$,
where $\kappa = \kappa (\frak M)$ is the root of the following equation:
\[ \frac{\sinh \kappa}{\kappa \cosh \kappa} = \frac{1}{\frak M^3} \, .
\]
Since $\frak M^3 \geq \int_0^1 H_p^{3} \D p > 1$, this equation is uniquely solvable
and we arrive at the following estimate:
\begin{equation} \label{KK9}
|\mu_0| \leq \kappa^2 \frak M^{-2} .
\end{equation}
Using the variational formulation of the problem \eqref{eq:hdf1}--\eqref{eq:hdf3},
one obtains that the right-hand side of the last inequality is a monotone function
of $\frak M$.

Now, we turn to estimates of $V$ and $V_p$. Integrating the equation (\ref{Vdisp})
and using the second boundary condition, we get that
\begin{equation}\label{KK1}
V_p (p) = \frac{H_p^3 (p)}{\lambda^3} + |\mu| H_p^3 (p) \int_0^p \frac{V (p')}{H_p
(p')} \D p' ,
\end{equation}
which gives the following lower estimates:
\begin{equation} \label{KK7}
V (p) \geq \frac{p \, \frak m^3}{\lambda^3} , \quad V_p (p) \geq \frac{\frak m^3}{\lambda^3} .
\end{equation}
In order to obtain an upper estimate for $V$, we integrate \eqref{KK1}, thus
reducing it to an integral equation with a positive operator. Its upper solution
gives an estimate of $V$, and one obtains such a solution from the problem
(\ref{SL1})--(\ref{SL3}) with $H_p^{-3}$ and $H_p^{-1}$ changed to $\frak M^{-3}$
and $\frak m^{-1}$, respectively. This leads to the following inequality:
\begin{equation} \label{KK8}
V \leq \frac{\sinh (\theta p)}{\theta} , \quad \mbox{where} \ \theta^2 = |\mu| 
\frac{\frak M^3}{\frak m} .
\end{equation}
Combining this and \eqref{KK1}, we get that
\begin{equation}\label{KK8a}
V_p \leq \frac{\frak M^3}{\lambda^3} + |\mu| \frac{\frak M^3}{\frak m} \int_0^p 
\frac{\sinh (\theta p)}{\theta} \D p .
\end{equation}

Let us estimate $\mu_0' (\lambda)$, for which purpose we differentiate the equation
\eqref{Vdisp} with respect to $\lambda$. The result multiplied by $V$ we integrate
over $(0,1)$ and after integration by parts arrive at
\begin{equation} \label{KK5}
\sigma_{\lambda} (\lambda,\mu) = \frac{1}{V (1;\lambda,\mu)} \Big( \int_0^1V_p^2
\partial_{\lambda} H_p^{-3} \D p - \mu \int_0^1V^2 \partial_\lambda H_p^{-1} \D p
\Big) .
\end{equation}
Since $\mu_0' (\lambda) = - \sigma_{\lambda} (\lambda,\mu (\lambda)) / \sigma_\mu
(\lambda,\mu (\lambda))$, relations \eqref{KK4} and \eqref{KK5} yield that $\mu
(\lambda)$ is a Lipschitz function on every interval separated from $\lambda_0$.

\subsection{Estimates for Nearcritical Values of $r$}

In this section, we refine the estimates obtained in Sections 3.1 and 3.2 by
applying the assumption \eqref{KK8}, which means that $\lambda \in [
(\lambda_0+\lambda_c)/2, \lambda_c )$.

\begin{proposition} \label{PropK1} 
If $\lambda \in [ (\lambda_0+\lambda_c)/2, \lambda_c)$, then the following
inequalities are valid:

{\rm (i)} $\mu_0 (\lambda) < 0$ and $|\mu_0 (\lambda)| \leq C_1 |\lambda - \lambda_c|$,

{\rm (ii)} $\mu_1 (\lambda) \geq C_2$,

{\rm (iii)}  $C_3 p \leq V(p) \leq C_4 p$ and $C_5 \leq V_p (p) \leq C_6$ for $p
\in [0, 1]$.

\noindent The positive constants $C_1, \ldots, C_6$ depend only on $\omega_0$ 
(see the first formula \eqref{omegaL} for its definition).
\end{proposition}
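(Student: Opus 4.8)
The plan is to combine the general facts established in Section 3.2 with the restriction $\lambda\in[(\lambda_0+\lambda_c)/2,\lambda_c)$, which has the crucial effect of keeping $\lambda$ \emph{bounded away from} $\lambda_0$ (so that $H_p$, $\mathfrak{m}$ and $\mathfrak{M}$ are controlled from above and below by constants depending only on $\omega_0$, via the representation \eqref{Stream} and $\lambda_c^2-\lambda_0^2\le 1$), while simultaneously keeping $\lambda$ \emph{bounded away from infinity}. First I would record that on this interval $\lambda^2-\lambda_0^2$ lies between two positive constants depending only on $\omega_0$; together with $\Omega(\tau)\le\tfrac12\lambda_0^2$ and $|\Omega(\tau)|\le\omega_0$ this pins $\mathfrak{M}=(\lambda^2-\lambda_0^2)^{-1/2}$ and $\mathfrak{m}$ between positive constants, and likewise bounds $\lambda$ itself from above and below. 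This is the step that makes everything else routine, so I would state it carefully as the backbone of the argument.

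For (i): by \eqref{KK6} and the sign analysis after it, $\sigma(\lambda,0)=\tfrac{3\lambda^2}{2}{\cal R}'(\lambda)<0$ for $\lambda<\lambda_c$, hence $\mu_0(\lambda)<0$ there. For the quantitative bound I would use that $\mu_0(\lambda)$ is Lipschitz on the interval $[(\lambda_0+\lambda_c)/2,\lambda_c]$ — this is exactly the conclusion drawn at the end of Section 3.2 from \eqref{KK4} and \eqref{KK5}, whose right-hand sides are controlled by the estimates \eqref{KK7}--\eqref{KK8a} once $\mathfrak{m},\mathfrak{M},\lambda,|\mu|$ are bounded (and $|\mu_0|\le\kappa^2\mathfrak{M}^{-2}$ from \eqref{KK9} bounds $|\mu|$ here). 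Since $\mu_0(\lambda_c)=0$, the Lipschitz bound gives $|\mu_0(\lambda)|\le C_1|\lambda-\lambda_c|$ with $C_1$ depending only on $\omega_0$. For (ii): inequality \eqref{KK10} reads $\mu_1\ge\pi^2\mathfrak{m}/\mathfrak{M}^3$, and since $\mathfrak{m}$ and $\mathfrak{M}$ are bounded between positive constants depending only on $\omega_0$, the right-hand side is $\ge C_2>0$. For (iii): the lower bounds $V(p)\ge p\,\mathfrak{m}^3/\lambda^3$ and $V_p(p)\ge\mathfrak{m}^3/\lambda^3$ are \eqref{KK7}, and with $\mathfrak{m},\lambda$ bounded these give the constants $C_3,C_5$; for the upper bounds I would feed the bound $|\mu|=|\mu_0|\le C_1|\lambda-\lambda_c|\le C_1$ (bounded) into \eqref{KK8} and \eqref{KK8a}, so $\theta^2=|\mu|\mathfrak{M}^3/\mathfrak{m}$ is bounded, $V\le\theta^{-1}\sinh(\theta p)\le C_4 p$ on $[0,1]$ (using $\sinh t\le t\cosh 1$ for $t\in[0,1]$ after absorbing constants), and correspondingly $V_p\le C_6$ from \eqref{KK8a}.

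The main obstacle is making the Lipschitz estimate for $\mu_0$ in step (i) genuinely uniform, i.e.\ with a constant depending only on $\omega_0$ and not on the particular $\lambda$ or on $\lambda_c,\lambda_0$ separately. This requires checking that the formula $\mu_0'(\lambda)=-\sigma_\lambda/\sigma_\mu$ has a numerator and denominator that are, respectively, bounded above and bounded below away from zero by constants depending only on $\omega_0$: the denominator $|\sigma_\mu|$ is bounded below using \eqref{KK4} together with the lower bound $V(p)\ge C_3 p$ and $V(1)$ bounded above, while the numerator $|\sigma_\lambda|$ is bounded above using \eqref{KK5}, the bounds on $V,V_p$, and the observation that $\partial_\lambda H_p^{-3}$ and $\partial_\lambda H_p^{-1}$ are controlled on $[0,1]$ since $H_p^{-1}=\sqrt{\lambda^2-2\Omega}$ depends smoothly on $\lambda$ with $\lambda$ in a compact interval. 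Once these two one-sided bounds are in place, $\mu_0$ is Lipschitz with the desired uniform constant, and the rest of the proof is the bookkeeping sketched above. I would therefore organise the write-up as: (1) the a priori bounds on $\mathfrak{m},\mathfrak{M},\lambda$; (2) the uniform Lipschitz bound on $\mu_0$ giving (i); (3) the short deductions of (ii) and (iii).
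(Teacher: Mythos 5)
Your overall scheme coincides with the paper's: reduce (i)--(iii) to uniform two-sided control of $\frak m$, $\frak M$ and $\lambda$ in terms of $\omega_0$ alone, and then read off (ii) from \eqref{KK10}, (iii) from \eqref{KK7}--\eqref{KK8a} together with \eqref{KK9}, and the quantitative part of (i) from the Lipschitz property of $\mu_0$ (via \eqref{KK4}, \eqref{KK5}) combined with $\mu_0(\lambda_c)=0$; in fact you are more explicit than the paper about why the Lipschitz constant is uniform.

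However, there is a genuine gap precisely at what you yourself call the backbone of the argument: the claim that on $[(\lambda_0+\lambda_c)/2,\lambda_c)$ the quantity $\lambda^2-\lambda_0^2$ is bounded \emph{below} by a positive constant depending only on $\omega_0$, which is what is needed to bound $\frak M=(\lambda^2-\lambda_0^2)^{-1/2}$ from above. You assert this, but the ingredients you cite ($\lambda_c^2-\lambda_0^2\le 1$, $|\Omega|\le\omega_0$, $\Omega\le\lambda_0^2/2$, the formula \eqref{Stream}) cannot yield it: $\lambda_c^2-\lambda_0^2\le 1$ goes in the wrong direction, and nothing you invoke rules out a priori that $\lambda_c$ is extremely close to $\lambda_0$, in which case $\frak M$ blows up on your interval and all the constants $C_1,\dots,C_6$ would cease to depend on $\omega_0$ only. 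The missing step is the heart of the paper's proof of this proposition: starting from the characterisation of $\lambda_c$ as the minimiser of ${\cal R}$ in \eqref{eq:calR}, i.e.\ $\int_0^1[\lambda_c^2-2\Omega(\tau)]^{-3/2}\,\D\tau=1$, and using the Lipschitz bound $2\Omega(\tau)\ge\lambda_0^2-2\omega_0|\tau-\tau_0|$ with $2\Omega(\tau_0)=\lambda_0^2$, one evaluates the resulting integral and obtains
\[
\lambda_c^2-\lambda_0^2\ \ge\ \frac{1}{4^{2/3}+(4\omega_0)^2}\,,
\]
whence the second inequality \eqref{KK7b} and then \eqref{KK7c}. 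Since for $\lambda\ge(\lambda_0+\lambda_c)/2$ one has $\lambda^2-\lambda_0^2\ge(\lambda_c^2-\lambda_0^2)/4$, this bound is exactly what makes your step (1) true; without an argument of this kind the remainder of your write-up, although correctly organised, does not establish the proposition.
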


\begin{proof}
It is easy to see that the right-hand side terms in \eqref{KK10}, \eqref{KK9} and
\eqref{KK7}--\eqref{KK8a} depend monotonically on both $\frak m$ and $\frak M$.
Therefore, these estimates imply that the inequalities listed in items (i)--(iii)
follow from the inequalities
\begin{equation} \label{KK7c}
C' (\omega_0) \leq \lambda_c \leq C'' (\omega_0) 
\end{equation}
provided the estimates
\begin{equation} \label{KK7b}
\frak m \geq \frak m_* \quad \mbox{and} \quad \frak M \leq \frak M_* 
\end{equation}
hold with positive constants $\frak m_*$ and $\frak M_*$ depending only on $\omega_0$. 

The definition of $\lambda_c$ implies that $\lambda_c^2 - \lambda_0^2 \leq 1$, and
so we have that
\[ \frak m \geq \frac{1}{\sqrt{\lambda_c^2 - \lambda_0^2 + \lambda_0^2 - 2 \min_{\tau 
\in [0,1]} \Omega (\tau)}} \geq \frac{1}{\sqrt{1 + 4 \, \omega_0}} ,
\]
which gives the first inequality \eqref{KK7b}. Furthermore, we see that
\[ \frak M \leq \frac{2}{\sqrt{\lambda_c^2 - \lambda_0^2}} .
\]
Hence it is sufficient to estimate $\lambda_c^2 - \lambda_0^2$ from below in order
to obtain the second inequality \eqref{KK7b}. Let $\tau_0$ be such that $2 \Omega 
(\tau_0) = \lambda_0^2$, then we have
\[ \int_0^1 (\lambda_c^2 - \lambda_0^2 + 2 \omega_0 |\tau - \tau_0|)^{-3/2} \D \tau
\leq \int_0^1 [ \lambda_c^2 -2 \Omega (\tau) ]^{-3/2} \D \tau = 1 .
\]
Evaluating the integral on the left-hand side, we get the inequality
\[ \frac{1}{\omega_0} \Big[ 2 (\lambda_c^2 - \lambda_0^2)^{-1/2} -
(\lambda_c^2 - \lambda_0^2 + 2 \omega_0 (1 - \tau_0))^{-1/2}
- (\lambda_c^2 - \lambda_0^2 + 2 \omega_0 \tau_0)^{-1/2} \Big] \leq 1 .
\]
It implies that $(\lambda_c^2 - \lambda_0^2)^{-1/2} - (\lambda_c^2 - \lambda_0^2 +
\omega_0 )^{-1/2} \leq \omega_0$, thus yielding
\[ 1 \leq 2 (\lambda_c^2 - \lambda_0^2 + \omega_0) \sqrt{\lambda_c^2 - \lambda_0^2} .
\]
Hence either $1/4 \leq (\lambda_c^2 - \lambda_0^2)^{3/2}$ or $1/4 \leq \omega_0
(\lambda_c^2 - \lambda_0^2)^{1/2}$, and so
\[ \frac{1}{4^{2/3} + (4 \omega_0)^2} \leq \lambda_c^2 - \lambda_0^2 ,
\]
which gives the second inequality \eqref{KK7b}.

Furthermore, combining the last inequality and $\lambda_c^2 - \lambda_0^2 \leq 1$,
we obtain \eqref{KK7c} with
\[ C' = \sqrt{\lambda_0^2 + \left[ 4^{2/3} + (4 \omega_0)^2 \right]^{-1}} \quad
\mbox{and} \quad C'' = \sqrt{1 + \lambda_0^2} \, .
\]
This completes the proof of the proposition.
\end{proof}

\section{Properties of Solutions of Problem ${\rm P_r^M}$}

In this section, we study the following properties of solutions. First, we obtain a
lower bound for $\check \eta = \inf_{x \in \Bbb R} \eta (x)$, which is similar to
the first assertion of Theorem 1, \cite{KKL2}, where the inequality $\check{\eta}
\geq d_- (r)$ was proved. (We cannot use that theorem here because it was
established under stronger assumptions than those imposed in problem ${\rm P_r^M}$.)
Then we derive a uniform bound for $\nabla \psi$ that depends only on $\omega_0$
provided Bernoulli's constant is separated from $r_0$. Moreover, bounds are obtained
for the H\"older norms of both $\psi$ and $\eta$. Finally, it is shown that the
smallness of $\eta - d_+ (r)$ depends on the difference $r - r_c$.

In what follows, we use different local estimates for solutions of  problem ${\rm
P_r^M}$ near the free surface, for which purpose we have to ascertain how thick is
the domain $D$.

\begin{lemma} \label{EtaBound}
Let $(\psi, \eta)$ be a solution of problem ${\rm P_r^M}$, then  the following
inequality holds: $\check{\eta} \geq \min \left\{ (6 r)^{-1/2} , (2 \omega_0)^{-1/2}
\right\}$.
\end{lemma}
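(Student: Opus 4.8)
The plan is to localise at a vertical segment through a point where $\eta$ attains its infimum $\check\eta$ and to majorise the vertical profile of $\psi$ there by an explicit concave parabola that is a supersolution of \eqref{eq:lapp}; comparing slopes at the free surface will produce a quadratic inequality for $\check\eta$, from which the two-sided bound follows by an elementary case distinction.

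Concretely, I would first introduce the strip $D_{-}=\Bbb R\times(0,\check\eta)$, which lies in $D$ because $\eta(x)\ge\check\eta$ for every $x$, and the function
\[
W(y)=\Big(\tfrac1{\check\eta}+\tfrac{\omega_0\check\eta}{2}\Big)\,y-\tfrac{\omega_0}{2}\,y^2 ,
\]
so that $W(0)=0$, $W(\check\eta)=1$ and $W''\equiv-\omega_0$. Since $|\omega|\le\omega_0$ and $0\le\psi\le1$ in $\bar D$ (by \eqref{eq:bcp}, \eqref{eq:kcp} and \eqref{eq:uni}), one has $\Delta(\psi-W)=\omega_0-\omega(\psi)\ge0$ in $D_{-}$, while on $\partial D_{-}$ the difference satisfies $\psi-W=0$ on $\{y=0\}$ and $\psi-W=\psi-1\le0$ on $\{y=\check\eta\}$. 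As $\psi-W$ is bounded, the Phragm\'en--Lindel\"of principle on the strip $D_{-}$ then yields $\psi\le W$ throughout $D_{-}$.

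Next, assume for the moment that $\check\eta$ is attained, say $\eta(x_0)=\check\eta$; then $(x_0,\check\eta)$ lies on the free surface, so by \eqref{eq:kcp} $\psi(x_0,\check\eta)=1=W(\check\eta)$. Hence $g(y):=W(y)-\psi(x_0,y)$ is nonnegative on $[0,\check\eta]$ and vanishes at $y=\check\eta$, which forces $g'(\check\eta^{-})\le0$, i.e. $W'(\check\eta)\le\psi_y(x_0,\check\eta)$. On the other hand $\psi_y(x_0,\check\eta)^2\le|\nabla\psi(x_0,\check\eta)|^2=3r-2\check\eta\le3r$ by \eqref{eq:bep}, whereas $W'(\check\eta)=1/\check\eta-\omega_0\check\eta/2$; combining these and multiplying by $\check\eta>0$ gives
\[
1\le\check\eta\,\sqrt{3r}+\tfrac{\omega_0}{2}\,\check\eta^{\,2}.
\]
If both $\check\eta<(6r)^{-1/2}$ and $\check\eta<(2\omega_0)^{-1/2}$ held, the right-hand side would be strictly smaller than $\tfrac1{\sqrt2}+\tfrac14<1$, which is impossible; therefore $\check\eta\ge\min\{(6r)^{-1/2},(2\omega_0)^{-1/2}\}$.

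Finally, when $\check\eta$ is not attained (the case of solitary-wave-type profiles, where $\check\eta$ is the limiting depth as $|x|\to\infty$), I would recover the same inequality by running the comparison at points $x_n$ with $\eta(x_n)\to\check\eta$ and passing to the limit; equivalently, one passes to the asymptotic stream solution of \eqref{eq:ss1} of depth $\check\eta$, for which the estimate is immediate because $\psi_{xx}\equiv0$ there, so $u''=-\omega(u)\ge-\omega_0$ together with $1=\int_0^{\check\eta}u'\,\D y$ and $|u'(\check\eta)|^2+2\check\eta=3r$ gives $1\le\check\eta\sqrt{3r}+\tfrac{\omega_0}{2}\check\eta^{2}$ directly. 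The steps requiring the most care are exactly these last two: the justification of $\psi\le W$ on the unbounded strip $D_{-}$ (a Phragm\'en--Lindel\"of argument that uses only boundedness and $0\le\psi\le1$), and the limiting reduction of the non-attained case; the heart of the matter — the one-line slope comparison at the surface and the elementary dichotomy — is then routine.
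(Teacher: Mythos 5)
Your comparison in the strip $D_-=\Bbb R\times(0,\check\eta)$ is sound as far as it goes: $\psi-W$ is indeed subharmonic there (since $\Delta\psi=-\omega(\psi)\ge-\omega_0=\Delta W$ and $0\le\psi\le1$ by \eqref{eq:uni}), the Phragm\'en--Lindel\"of principle on a strip applies to this bounded function, and at a point $x_0$ with $\eta(x_0)=\check\eta$ the one-sided slope comparison together with \eqref{eq:bep} gives $1\le\check\eta\sqrt{3r}+\tfrac{\omega_0}{2}\check\eta^2$ and hence the stated bound. In the attained case this is a correct and rather cleaner variant of the comparison idea used in the paper.

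The genuine gap is the case in which $\check\eta$ is \emph{not} attained (including $\check\eta=0$, where your $W$ is not even defined), and this is not a marginal case: solutions of ${\rm P^M_r}$ are not assumed periodic or asymptotic to anything, and for a solitary wave the infimum is never attained, so this is the generic situation the lemma must cover. Your first patch (take $x_n$ with $\eta(x_n)\to\check\eta$ and ``pass to the limit'') does not go through as stated: at $x_n$ the level $y=\check\eta$ no longer touches the free surface, so $g_n(\check\eta)=1-\psi(x_n,\check\eta)$ is only nonnegative, and a nonnegative function with a small endpoint value can still have a large positive one-sided derivative there; to quantify the Hopf-type comparison you would need uniform bounds on $\nabla\psi$ (in fact a uniform modulus of continuity of $\psi_y$) near the free surface, which are not available at this stage --- Proposition \ref{PropBoundsPsi} is proved \emph{after} Lemma \ref{EtaBound} and its proof uses that lemma, so invoking such bounds would be circular. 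Your second patch (``pass to the asymptotic stream solution of depth $\check\eta$'') presupposes that the flow asymptotes a shear flow, which is not part of the definition of problem ${\rm P^M_r}$ and is not known a priori. This is exactly the difficulty the paper's proof is built to avoid: it compares $\psi$ with the solution $\psi_{d,\epsilon}$ on a domain bounded by the lowered Gaussian bump $d-\delta+\eta_\epsilon(x-x_0)$, chosen so that this curve genuinely touches the free surface at some finite $x_*$ for every $\delta>0$; the slope comparison is then performed at that true free-surface point where Bernoulli's equation holds exactly, with an $O(\delta)$ error controlled by the estimate \eqref{3sep}, and $\delta\to0$ at the end. To complete your proof you would need either such a touching construction or an a priori regularity statement near the free boundary that is independent of this lemma.
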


\begin{proof} 
Let $d > 0$ and $\epsilon \in (0, 1)$ be given numbers, and let $\eta_{\epsilon} (x)
= \epsilon \, \E^{-x^2}$. In the domain $D_{d,\epsilon} = \{ (x,y) \in \Bbb R^2 : 0
< y < d + \eta_{\epsilon} (x) \}$, we consider the auxiliary boundary value problem
\[ \nabla^2 \Psi + \omega_0 = 0 \ \text{in} \  D_{d,\epsilon} , \quad
\Psi (x,0) = 0, \ \ \ \Psi (x, d + \eta_{\epsilon} (x)) = 1 ,
\]
whose bounded solution we denote by $\psi_{d,\epsilon}$. Let us show that
\begin{equation}
\| \psi_{d,\epsilon} - u_{d,\epsilon} \|_{C^1 (D_{d,\epsilon})} \leq \epsilon \, 
C_{d, \omega_0} ,
\label{3sep}
\end{equation}
where $u_{d,\epsilon} (x, y) = u_d ( d y / [d + \eta_{\epsilon} (x)] )$ and $u_d (y)
= y [ d^{-1} + \omega_0 (d - y) / 2 ]$. It is clear that the last function satisfies
the following relations:
\[ u_d'' + \omega_0 = 0 \ \mbox{on} \ (0, 1) , \quad u_d (0) = 0 , \quad u_d (d) = 1 .
\]
Moreover, it is an increasing function on $[0, d]$ when $d < d_* = \sqrt{2 /
\omega_0}$. In order to prove \eqref{3sep}, we note that $u = \psi_{d,\epsilon} -
u_{d,\epsilon}$ solves the homogeneous Dirichlet problem for the following equation:
\[ \nabla^2 u = \epsilon \left\{ \left[ \frac{2 d x \E^{-x^2}} {d + \epsilon 
\E^{-x^2}} \, u'_d \left( \frac{d y}{d + \eta_{\epsilon} (x)} \right) \right]_x
\!\! - \frac{\omega_0 \, \E^{-x^2} [2 d + \eta_{\epsilon} (x)]}{[d + \eta_{\epsilon}
(x)]^2} \right\} \ \ \mbox{in} \ D_{d,\epsilon} .
\]
This yields that $\| u \|_{W^{2,1} (D_{d,\epsilon})} \leq \epsilon \, C_{d, \omega_0}$,
from which \eqref{3sep} follows by virtue of local estimates.

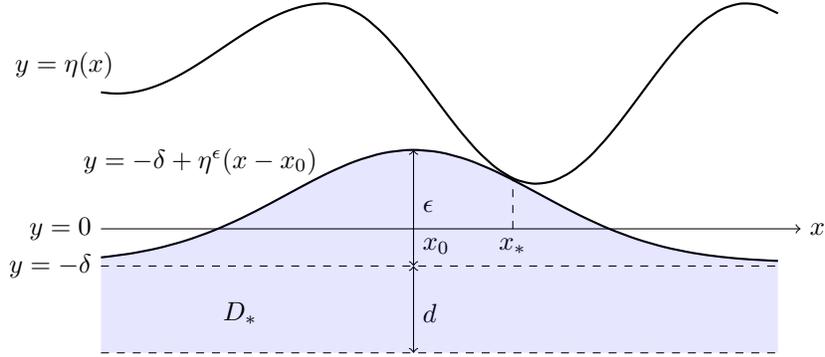
\begin{figure} \label{Pic2}
\centering
\begin{tikzpicture}[scale=1.50]
\draw[->] (-2,1.8) node[left]{$y=0$} -- (4.2,1.8) node[right]{$x$};
\draw[-,dashed] (-2,1.47) node[left]{$y=-\delta$} -- (4,1.47);
\draw[-,dashed] (-2,0.7) -- (4,0.7);

\draw[-,dashed] (1.65,1.8) node[below]{$x_*$} -- (1.65,2.26);

\draw[<->] (0.77,0.7) -- (0.77,1.47);
\draw[<->] (0.77,1.47) -- (0.77,2.5) ;

\node[below right] at (0.77,1.8) {$x_0$};

\node[right] at (0.77,2) {$\epsilon$};
\node[right] at (0.77,1.05) {$d$};

\node[left] at (-1.80,3.25) {$y=\eta(x)$};
\node[left] at (0,2.4) {$y=-\delta + \eta^{\epsilon}(x-x_0)$};

\draw[thick] plot[domain=-2:4,smooth] (\x,{1.5+exp(-0.4*(\x-0.77)*(\x-0.77))});

\draw[thick] plot[domain=0:4,smooth] (\x,{3+0.8*cos(1.69*\x r)});
\draw[thick] plot[domain=-2:0,smooth] (\x,{3.4+0.4*cos(1.69*\x r)});

\fill[fill=blue, opacity = 0.1] (-2,0.7) -- plot[domain=-2:4,smooth]
(\x,{1.5+exp(-0.4*(\x-0.77)*(\x-0.77))}) -- (4,0.7) -- cycle;

\node[right] at (-1,1.05) {$D_*$};

\end{tikzpicture}
\caption{A sketch of the domain $D_*$.}
\end{figure}

Now we turn to the lower bound for $\check{\eta}$ and consider two cases. First,
let us assume that $\check{\eta} = 0$. Since $\eta$ does not vanish identically,
for some $d \in (0, d_*)$ and $\delta > 0$ there exists $\epsilon \in (\delta, 2
\delta)$ and $x_0 , x_* \in \Bbb R$ (see Figure 5) such that
\[ \eta (x) \geq - \delta + \eta_{\epsilon} (x - x_0) \quad \mbox{and} \quad
\eta (x_*) = - \delta + \eta_{\epsilon} (x_* - x_0) .
\]
Let us apply the maximum principle to the superharmonic function 
\[ U (x,y) = \psi_{d, \epsilon} (x-x_0, y+d+\delta) - \psi (x, y)
\]
in the domain 
\[ D_* = \{ (x,y) : - Q < x - x_0 < Q , \ \ 0 < y < - \delta + \eta_{\epsilon} 
(x - x_0) \} ,
\]
where $Q > 0$ is such that $\eta_{\epsilon} (Q) = \delta$. Since $U$ is positive
in this domain and $U (x_*, - \delta + \eta_{\epsilon} (x_* - x_0)$ vanishes, we
obtain that $\partial_n U (x_*, - \delta + \eta_{\epsilon} (x_* - x_0)) < 0$,
and so $u_d' (d) \leq \sqrt{3 r} + O (\delta)$. Letting $\delta \to 0$ in this
inequality, we arrive at $u_d' (d) \leq \sqrt{3 r}$, which is impossible when
$d$ is sufficiently small. The obtained contradiction shows that $\check{\eta} >
0$.

Let us assume that the positive $\check{\eta}$ is less than $(2 \omega_0)^{-1/2}$, because otherwise the required inequality is obviously true. To keep the same
notation as in the previous case we put $d = \check{\eta}$. Then for every $\delta
\in (0, d)$ there exists $\epsilon \in (\delta, 2 \delta)$ and $x_0 , x_* \in \Bbb
R$ such that
\[ \eta (x) \geq d - \delta + \eta_{\epsilon} (x - x_0) \quad \mbox{and} \quad
\eta (x_*) = d - \delta + \eta_{\epsilon} (x_* - x_0) .
\]
Now we apply the maximum principle to the superharmonic function 
\[ U (x,y) = \psi_{d, \epsilon} (x-x_0, y+\delta) - \psi (x, y)
\]
in the domain $\{ (x,y) : -\infty < x < +\infty , \ \ \ 0 < y < d - \delta +
\eta_{\epsilon} (x - x_0) \}$, thus concluding (similarly to the previous case)
that $\partial_n U (x_*, d - \delta + \eta_{\epsilon} (x_* - x_0)) < 0$. Therefore,
\[ u_d' (d) =  \check{\eta}^{-1} - \omega_0 \check{\eta} / 2 \leq 
\sqrt{3 r - \check{\eta}} ,
\]
and so $\check{\eta} \geq \min \left\{ (6 r)^{-1/2} , (2 \omega_0)^{-1/2} \right\}$.
It should be noted that this inequality for $\check{\eta}$ also holds when
$\check{\eta} \geq d_0$, which completes the proof.
\end{proof}

\subsection{Uniform Bound for Velocity Field} 

Here we give a uniform bound for $\nabla \psi$, and the main difficulty is the
nonlinear term in equation \eqref{eq:lapp} which does not allow us to apply the
standard maximum principle for elliptic equations.

\begin{proposition} \label{PropBoundsPsi} 
For every $R > r_c$ there exists a constant $C (\omega_0, R)$ (it does not depend on
other parameters) such that the inequality $|\nabla \psi (x, y)| \leq C (\omega_0,
R)$ holds for all $(x, y) \in \bar D$ provided $(\psi, \eta)$ solves problem ${\rm
P_r^M}$ with $r \in (r_c, R]$.
\end{proposition}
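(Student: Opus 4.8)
The plan is to bound $|\nabla\psi|$ by a maximum principle applied to a Bernoulli-type quantity corrected so as to absorb the nonlinear term $\omega(\psi)$; a naive application of the maximum principle to $|\nabla\psi|^2$ is exactly what fails because of this term. First I would record three elementary a priori facts about a solution $(\psi,\eta)$ of problem ${\rm P}_r^M$ with $r\in(r_c,R]$. Since $\psi_y>0$ on $\bar D$ by \eqref{eq:uni} and $\psi$ increases along each vertical segment from $0$ on the bottom to $1$ on the free surface, one has $0\le\psi\le1$ in $\bar D$. Next, \eqref{eq:bep} gives $|\nabla\psi(x,\eta(x))|^2=3r-2\eta(x)$, so $|\nabla\psi|\le\sqrt{3R}$ on the free surface and simultaneously $\eta(x)\le\tfrac32 R$; combined with Lemma~\ref{EtaBound} this shows that $D$ lies in the fixed strip $\{0<y<\tfrac32 R\}$ and has thickness at least $c_0:=\min\{(6R)^{-1/2},(2\omega_0)^{-1/2}\}$. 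Finally, on the bottom $\psi=0$ and $|\omega(\psi)|\le\omega_0$, so boundary $C^{1,\alpha}$ (Schauder) estimates up to the flat portion $\{y=0\}$, applied on the rectangle $(x_0-c_0/2,x_0+c_0/2)\times(0,c_0/2)\subset D$ and using $0\le\psi\le1$, give $|\nabla\psi(x_0,0)|\le C_0(\omega_0,R)$ for every $x_0\in\RR$; note that $M$ plays no role here.

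The key step is to consider $w=|\nabla\psi|^2+2\,\Omega(\psi)+\tfrac12\,\omega_0^2\,y^2$ on $\bar D$. Differentiating and using $\Delta\psi=-\omega(\psi)$ one computes, in the weak sense (legitimate since $\psi\in C^{2,\alpha}(D)$ and $\omega'\in L^\infty$),
\[
\Delta w=2\,|\nabla^2\psi|^2-2\,\omega(\psi)^2+\omega_0^2 ,
\]
because the $\omega'(\psi)|\nabla\psi|^2$ contribution coming from $\Delta|\nabla\psi|^2$ is cancelled by the one coming from $\Delta\Omega(\psi)$. The pointwise inequality $|\nabla^2\psi|^2\ge\tfrac12(\Delta\psi)^2=\tfrac12\,\omega(\psi)^2$ (valid in two dimensions) then yields $\Delta w\ge\omega_0^2-\omega(\psi)^2\ge0$, so $w$ is subharmonic in $D$; the quadratic correction $\tfrac12\omega_0^2 y^2$ is chosen just large enough to make this work. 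On $\partial D$ the preliminary facts give $w\le C_0^2$ on the bottom and $w=(3r-2\eta)+2\Omega(1)+\tfrac12\omega_0^2\eta^2\le3R+2\omega_0+\tfrac98 R^2\omega_0^2$ on the free surface (using $\eta\le\tfrac32 R$ and $|\Omega(1)|\le\omega_0$); call the larger of these two bounds $B=B(\omega_0,R)$.

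It then remains to pass from the boundary bound for the subharmonic function $w$ to a bound inside $D$, which is a Phragmén--Lindelöf argument on the strip $\{0<y<\tfrac32 R\}$. This requires only a \emph{qualitative} a priori bound $\sup_{\bar D}|\nabla\psi|<\infty$; such a bound follows from standard uniform elliptic (Schauder) estimates in the Lipschitz domain $D$, whose boundary has Lipschitz constant $M$ and on which the lower-order data are controlled by $\omega_1$, and this is the only point at which $M$ and $\omega_1$ intervene. Granting it, $w$ is continuous on $\bar D$ (recall $\nabla\psi\in C(\bar D)$), bounded above, subharmonic in $D$, and $\le B$ on $\partial D$; since the harmonic measure of the far portions of $\partial D$ tends to zero as these portions recede (the containing strip being of finite width), comparison on the truncated domains $D\cap\{|x|<n\}$ forces $w\le B$ throughout $D$. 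Therefore $|\nabla\psi|^2=w-2\Omega(\psi)-\tfrac12\omega_0^2 y^2\le B+2\omega_0$ on $\bar D$, which is the assertion with $C(\omega_0,R)=(B+2\omega_0)^{1/2}$.

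I expect the principal obstacle to be exactly the sign-indefiniteness introduced by $\omega(\psi)$: the remedy is the identity for $\Delta w$ together with the two-dimensional inequality $|\nabla^2\psi|^2\ge\tfrac12(\Delta\psi)^2$, which together restore subharmonicity after adding $\tfrac12\omega_0^2 y^2$. A secondary technical point is justifying the maximum principle on the unbounded strip-like domain, which is handled by first securing the non-quantitative gradient bound above and then upgrading it to the quantitative one depending only on $\omega_0$ and $R$.
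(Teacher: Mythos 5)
Your central device is the same as the paper's final step: the Bernoulli-type quantity corrected by $\omega_0^2y^2/2$ is (weakly) subharmonic, its boundary values are controlled by $\omega_0$ and $R$ (on the free surface via \eqref{eq:bep} and $\eta\le\tfrac32R$, on the bottom via a local estimate on a rectangle whose height is fixed by Lemma \ref{EtaBound}, so that $M$ does not enter), and a maximum principle on the strip-contained unbounded domain finishes the proof. Your Laplacian computation with $+2\Omega(\psi)$, for which the $\omega'(\psi)|\nabla\psi|^2$ terms cancel and $2|\nabla^2\psi|^2\ge(\Delta\psi)^2$ gives $\Delta\bigl(|\nabla\psi|^2+2\Omega(\psi)\bigr)\ge-\omega_0^2$, is correct and is evidently what the paper intends with its function $P^*$.

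The gap is the step you call ``qualitative'': the bound $\sup_{\bar D}|\nabla\psi|<\infty$, uniform in $x$, which your Phragm\'en--Lindel\"of comparison cannot do without (a subharmonic function in a strip with bounded boundary values need not be bounded, e.g.\ $\cosh(kx)\sin(ky)$). You assert it follows from ``standard uniform elliptic (Schauder) estimates in the Lipschitz domain $D$'', but no gradient-up-to-the-boundary Schauder theory is available when the free boundary is merely Lipschitz with constant $M$ (possibly large); and the standing hypotheses of problem ${\rm P^M_r}$ give only $\psi\in C^1_{loc}(\bar D)$, hence local but not uniform-at-infinity control, while the pointwise Bernoulli relation bounds $|\nabla\psi|$ \emph{on} the free surface but not in a uniform neighbourhood of it. Supplying exactly this bound is where the paper spends most of its proof: first the uniform per-period estimate \eqref{L2bound}, $\sup_t\int_{D_t}|\nabla\psi|^2\le C(\omega_0,M,r)$, obtained by multiplying \eqref{eq:lapp} by $\psi$ and taming the lateral terms with an exponential-weight averaging trick; then a pointwise bound near the free surface from the De~Giorgi--Nash--Moser local boundedness theorem (Theorem 8.25 of \cite{GT}) applied to $\pm\psi_x,\pm\psi_y$, which satisfy $\nabla^2\psi^*+\omega'(\psi)\psi^*=0$, extended above the free surface by their boundary supremum $m\le\sqrt{3r}$ coming from \eqref{eq:bep} --- a device that needs no boundary regularity at all, which is the whole point of it. If you replace your one-sentence appeal by this (or an equivalent) argument, the rest of your proof goes through and coincides in substance with the paper's concluding maximum-principle step (the paper invokes Cabr\'e's theorem where you argue via harmonic measure, which is equally fine).
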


\begin{proof}
Let $D_t = \{ (x,y): t \leq x \leq t+1, 0 \leq y \leq \eta(x) \}$ for an
arbitrary $t \in \Bbb R$. Our first aim is to show that 
\begin{equation} \label{L2bound}
\sup_{t \in \Bbb R} \int_{D_t} |\nabla \psi|^2 \D x \D y \leq C (\omega_0, M, r) ,
\end{equation}
where the constant $C (\omega_0, M ,r)$ does not depend on other parameters. For this
purpose we multiply the equation \eqref{eq:lapp} by $\psi$, integrate the result
over $D_t$ and apply the first Green's formula, thus obtaining
\begin{eqnarray}
&& \int_{D_t} \!\! |\nabla \psi|^2 \D x \D y = \int_{D_t} \!\! \omega (\psi) \psi \,
\D x \D y + \int_t^{t+1} \!\! \nabla \psi \cdot (-\eta' (x), 1) \, \D x \nonumber \\
&& + \int_0^{\eta (t+1)} \!\! \psi_x (t+1,y) \psi (t+1,y) \, \D y - \int_0^{\eta
(t)} \!\! \psi_x (t,y) \psi (t,y) \, \D y .
\label{l2bound1} 
\end{eqnarray}
Let us consider each of the four integrals on the right-hand side, say
$I_1,\,I_2,\,I_3,\,I_4$.

Since the values of $\psi$ belong to $[0, 1]$ according to \eqref{eq:uni}, the
definition of $\omega_0$ implies that $|I_1| \leq 3 r \omega_0 / 2$ for all $t \in
\Bbb R$ because $\eta (x) \leq 3 r / 2$ for all $x \in \Bbb R$. Furthermore, the
boundary condition \eqref{eq:kcp} yields that
\[ \psi_x (x, \eta (x)) = - \eta' (x) \psi_y (x, \eta (x)) ,
\]
and so the Bernoulli equation \eqref{eq:bep} gives 
\[ 0 < I_2 = \int_t^{t+1} \sqrt{[3r - 2 \eta (x)] [1 + \eta'^2 (x)]} \, \D x \leq 
\sqrt{3 r (1 + M^2)} \quad \mbox{for all} \ t \in \Bbb R .
\]

Our estimates of $|I_3|$ and $|I_4|$ are based on the equality
\[ \int_0^{\eta (t)} \!\! \psi_x(t,y) \psi(t,y) \, \D y = 2^{-1} \frac{\D}{\D t}
\left[ \int_0^{\eta (t)} \!\! \psi^2 (t,y) \, \D y  -  \eta (t) \right] ,
\]
which allows us to estimate
\[ \left| \int_{\Bbb R} \E^{-|t-\tilde{t}|} \, \D \tilde{t} 
\int_0^{\eta (\tilde{t})} \psi_x (\tilde{t},y) \psi (\tilde{t},y) \, \D y \,
\right| \quad \mbox{for all} \ t \in \Bbb R .
\]
Indeed, multiplying the right-hand side of the previous formula by the
corresponding exponential and integrating over $\Bbb R$, we get after integration by
parts that the last integral is less than or equal to $3r$.

Thus, it follows from \eqref{l2bound1} and the obtained inequalities that
\[ \int_{\Bbb R} \E^{-|t-\tilde{t}|} \, \D \tilde{t} \int_{D_{\tilde{t}}} 
|\nabla \psi|^2 \, \D x \D y \leq C (\omega_0, M, r) \quad \mbox{for all} \ t
\in \Bbb R .
\]
After changing the order of integration, this gives
\[
\int_{D} |\nabla \psi|^2  \E^{-|x-t|} \, \D x \D y \leq C (\omega_0, M, r) \quad
\mbox{for all} \ t \in \Bbb R ,
\]
which, in its turn, yields \eqref{L2bound}.

To complete the proof we apply some local estimates. First, we concentrate on
interior estimates and estimates near the bottom for which purpose we use only
the equation \eqref{eq:lapp} and the boundary condition \eqref{eq:bcp}.

The Schauder interior estimates (see \cite{GT}, Theorem 6.2) and the inequality
\eqref{L2bound} imply that $|\nabla \psi|$ is bounded pointwise by a constant $C
(\omega_1, \epsilon)$ in the domain $D_\epsilon$ whose points are distant from
$\partial D$ not less than $\epsilon  = \check{\eta}/3$. Besides, combining local
estimates near a smooth boundary (see \cite{GT}, Corollary 8.36) and the inequality
\eqref{L2bound}, one obtains that $|\nabla \psi|$ is bounded pointwise by a constant
$C(\omega_0, \epsilon)$ in the $\epsilon$-neighbourhood of the bottom $\{ x \in \Bbb
R, y = 0 \}$. Furthermore, Lemma \ref{EtaBound} shows that $\epsilon =
\check{\eta}/3$ is greater then a positive constant depending only on $\omega_0$ and
$R$.

To estimate $\nabla \psi$ near the free surface we use Theorem 8.25, \cite{GT}, for
solutions of the equation $\nabla^2 \psi^* + \omega' (\psi) \psi^* = 0$, thus
obtaining
\begin{equation} \label{GTTh25}
\sup_{B_a (Z_0)} \psi^*_m \leq C (\omega_1) a^{-1} \| \psi^*_m \|_{L^2 (B_{2 a} (Z_0))} .
\end{equation}
Here $\psi^*$ stands for either of the first derivatives of $\psi$, $a > 0$ is fixed
and $Z_0$ is an arbitrary point of $D$, $B_a (Z_0)$ denotes the open circle of
radius $a$ centred at $Z_0$, $m = \sup_{\partial D \cap B_{2 a} (Z_0)} \psi^*$ and
\[ \psi^*_m (x,y) =  \begin{cases} \max \{ \psi^* (x,y), m \} \ \mbox{when} \ (x,y) 
\in D , \\ m \ \mbox{when} \ (x,y) \notin D . \end{cases}
\]
Applying \eqref{GTTh25} in an arbitrary ball centred at the free surface with $a =
\epsilon = \check{\eta}/3$, we note that the Bernoulli equation \eqref{eq:bep}
implies that $m \leq \sqrt{3 r}$. Combining \eqref{L2bound} and \eqref{GTTh25}, we
find that $\psi^*$ is bounded from above by $C (\omega_1, M, r)$ in the
$\epsilon$-neighbourhood of the free surface. Since the same argument is valid for
$- \psi^*$, we get that
\[ |\nabla \psi (x, y)| \leq C (\omega_1, M, r) \quad \mbox{for all} \ (x, y)
\in D .
\]

In order to prove that a similar estimate holds with a constant independent of $M$
we consider the function
\[ P(x,y) =  - 3r + |\nabla \psi|^2 + 2 y - 2 \Omega (\psi) .
\]
A direct calculation gives that $\nabla^2 P \geq \omega_0^2$ in $D$, and so $P^* = P
+ \omega_0^2 y^2/2$ is a subharmonic function in $D$. Since $P^*$ is bounded, the
maximum principle (see \cite{C}, Theorem 1.4) is applicable. Hence the supremum of
$P^*$ is attained on the boundary. On the free surface, $P^*$ is bounded by $2
\omega_0 + 3 \omega_0^2 r^2$, whereas there exists a constant $C(\omega_0, R)$ such
that $P^* \leq  C(\omega_0, R)$ on the bottom. Therefore, $P^* \leq C(\omega_0, R)$
throughout $D$, which proves the proposition when one takes into account the
definition of $P$.
\end{proof}

\subsection{Bounds for Solutions of $\rm P_r^M$ in the Nearcritical Case}

By the definition of problem $\rm P_r^M$, the slope of $\eta$ is bounded, but this
restriction does not prevent that stagnation points might be present on $\eta$.
However, if the Bernoulli constant $r$ is sufficiently close to its critical value
$r_c$, then any solution of ${\rm P_r^M}$ is of small amplitude, and so there are
no stagnation points on $\eta$. The following assertion deals with both these
properties.

\begin{theorem} \label{propSmall} 
Let $M$ be positive and $r$ is subject to condition \eqref{BdR}. Then the following assertions are true.
\begin{itemize}
\item[\rm (a)] For every $\epsilon > 0$ there exists a constant $r' (\epsilon, M,
\omega_0) > r_c$ such that the inequality $\sup_{x \in \Bbb R} |\eta(x) - d_+(r)| <
\epsilon$ is valid for the second component of a solution of problem\/ ${\rm P_r^M}$
with $r \in (r_c, r']$. Here $d_+(r)$ is the quantity defined in Section 2.1.
\item[\rm (b)]  There exist $r'' (M, \omega_1) > r_c$ and $\delta (M, \omega_1) > 0$
such that the inequality $\psi_y > \delta(M, \omega_1)$ is fulfilled in $D$ for the
first component of a solution of problem\/ ${\rm P_r^M}$ with $r \in (r_c, r'']$.
\end{itemize}
\end{theorem}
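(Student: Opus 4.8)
The plan is to prove both assertions by compactness and contradiction, relying on two ingredients: the fact, recalled in Section~2.1, that problem \eqref{eq:lapp}--\eqref{eq:bep}, \eqref{eq:uni} has no solution for $r<r_c$ and only the critical stream $(u_c,d_c)$ when $r=r_c$, together with the a priori bounds already established. Assumption \eqref{BdR} keeps $r$ bounded away from $r_0$, since the only minimum of ${\cal R}$ is at $\lambda_c$; hence Proposition~\ref{PropBoundsPsi} applies with $R={\cal R}([\lambda_c+\lambda_0]/2)$ (and $R$, as well as $\lambda_0,\lambda_c$, is bounded in terms of $\omega_0$) and yields $|\nabla\psi|\le C(\omega_0)$ on $\bar D$ for every solution of ${\rm P^M_r}$ in the range under consideration, while Lemma~\ref{EtaBound} gives $\check\eta\ge c(\omega_0)$. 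Consequently interior Schauder estimates together with estimates up to the flat bottom bound $\psi$ in $C^{2,\alpha}_{loc}$ away from the free surface, uniformly in $M$ and $r$, the constant depending only on $\omega_1$.

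For assertion (a): if it fails, there are $\epsilon_0>0$, a sequence $r_n\downarrow r_c$, solutions $(\psi_n,\eta_n)$ of ${\rm P^M_{r_n}}$, and points which, after a horizontal translation, lie over the origin, with $|\eta_n(0)-d_+(r_n)|\ge\epsilon_0$. Since the $\eta_n$ are uniformly Lipschitz with constant $M$ and satisfy $c(\omega_0)\le\eta_n\le 3r_n/2$, a subsequence has $\eta_n\to\eta_*$ locally uniformly and $\psi_n\to\psi_*$ in $C^1_{loc}$ of the limiting domain. The crucial claim is that $(\psi_*,\eta_*)$ again solves \eqref{eq:lapp}--\eqref{eq:bep}, \eqref{eq:uni} with $r=r_c$: the limit passage in the interior equation and in the Dirichlet conditions is routine (with $\psi_{*,y}>0$ in the interior coming from the strong maximum principle), while the Bernoulli condition \eqref{eq:bep} is recovered by transferring the problem to the fixed strip through the partial hodograph transform of Section~3.1. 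The latter needs $\psi_{n,y}$ bounded below near the surface --- equivalently, by \eqref{eq:bep} written as $\psi_y^2(1+\eta'^2)=3r-2\eta$ and $|\eta_n'|\le M$, that $\sup\eta_n$ stay below $3r_n/2$ by a fixed amount; on the surface this is immediate once that bound is known, and it is extended to a uniform one-sided neighbourhood by a barrier argument, making the required uniform elliptic estimates up to the surface available. Granting this, $\eta_*\equiv d_c$ since for $r=r_c$ only the critical stream exists, which contradicts $|\eta_*(0)-d_c|\ge\epsilon_0$ because $d_+(r_n)\to d_c$. This produces $r'(\epsilon,M,\omega_0)$; only the $L^\infty$-bound on $\omega$ enters the $C^1_{loc}$-compactness, which accounts for the dependence on $\omega_0$ alone.

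For assertion (b), which I would deduce from (a): suppose there are $r_n\downarrow r_c$, solutions $(\psi_n,\eta_n)$ of ${\rm P^M_{r_n}}$ and points $z_n$, placed over the origin, with $\psi_{n,y}(z_n)\to 0$. By (a) one has $\eta_n\to d_c$ uniformly, so $3r_n-2\eta_n\to\lambda_c^2-2\Omega(1)\ge\lambda_c^2-\lambda_0^2>0$, the last inequality being the bound $\lambda_c^2-\lambda_0^2\ge[\,4^{2/3}+(4\omega_0)^2\,]^{-1}$ obtained in the proof of Proposition~\ref{PropK1}. Then \eqref{eq:bep} and $|\eta_n'|\le M$ give $\psi_{n,y}\ge c_1(M,\omega_0)>0$ on the free surface for large $n$, so the hodograph estimates up to the surface apply ($h_{n,p}=1/\psi_{n,y}$ is bounded near $p=1$) and $z_n$ stays at a positive distance from the surface, where the full $C^{2,\alpha}_{loc}$ bounds hold. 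Passing to the limit as before, $\psi_*=u_c$ and $\eta_*\equiv d_c$, so $\psi_{*,y}(x,y)=u_c'(y)=[\lambda_c^2-2\Omega(u_c(y))]^{1/2}\ge(\lambda_c^2-\lambda_0^2)^{1/2}$ everywhere, which contradicts $\psi_{*,y}=0$ at the limit of the points $z_n$. Hence $\inf_D\psi_y$ is bounded below uniformly for $r$ near $r_c$; one may take $\delta=\tfrac12(\lambda_c^2-\lambda_0^2)^{1/2}$, and $r''$, $\delta$ are permitted to depend on $\omega_1$ through the Schauder estimates near the surface.

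The step I expect to be the main obstacle is the limit passage in the Bernoulli equation in part (a): this is the only place where the merely Lipschitz regularity of the free surface obstructs compactness, and breaking it forces one onto the fixed strip via the hodograph reformulation, which in turn hinges on a uniform exclusion of near-stagnation near the crest. Once assertion (a) is in hand, assertion (b) follows from it and from the explicit lower bound $\lambda_c^2-\lambda_0^2>0$ of Proposition~\ref{PropK1} in a comparatively routine way.
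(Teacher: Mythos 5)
There is a genuine gap, and it sits exactly where you yourself point: the limit passage in the Bernoulli condition in part (a). Your compactness argument needs uniform elliptic estimates up to the free surface (via the hodograph transform), and for that you need $\psi_{n,y}$ uniformly bounded below near the surface, equivalently a uniform bound $3r_n-2\sup\eta_n\geq c>0$. You write that this ``is immediate once that bound is known'' --- but you never establish that bound, and it is not available a priori: the definition of problem $\rm P^M_r$ only imposes the pointwise inequality $\psi_y>0$, and Proposition \ref{PropBoundsPsi} gives only an \emph{upper} bound for $|\nabla\psi|$. In fact the missing bound is essentially assertion (a) itself in disguise (since $3r-2d_+(r)=\lambda_+^2(r)-2\Omega(1)$ is bounded below, (a) implies it, and conversely it is the whole difficulty), so the argument is circular at its crucial step. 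The same lack of surface control undermines the rigidity step: without it the limit $(\psi_*,\eta_*)$ may have $\psi_{*,y}=0$ somewhere on the free surface, i.e.\ it need not lie in the unidirectional class, whereas the statement ``only the stream solution exists at $r=r_c$'' is taken from \cite{KKL2} and, as the paper itself notes before Lemma \ref{EtaBound}, was proved there under stronger hypotheses than those of $\rm P^M_r$; applying it to such a limit object is not justified. A further, secondary, defect is that a compactness argument with $\omega$ fixed yields an $r'$ depending on the particular vorticity distribution, not merely on $\omega_0$ as the theorem asserts.

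The paper's proof avoids all of this by being quantitative rather than by contradiction: in the hodograph variables it projects $w=h-H$ onto the fundamental eigenfunction $\phi_0$, derives an exact identity whose left-hand side is a nonnegative quadratic expression in $f$ and $w_p$, multiplies by the weight $\E^{-\vartheta|q-q_0|}$ with $\vartheta=\sqrt{|\mu_0|}$, and extracts the bound $\int_{q_0-1}^{q_0+1}|\eta-d_+(r)|\,\D q\leq C(\omega_0)|\mu_0|^{1/4}$; combined with $|\mu_0(\lambda)|\leq C|\lambda-\lambda_c|$ (Proposition \ref{PropK1}) and $|\eta'|\leq M$ this gives (a) with constants depending only on $\omega_0$ and $M$, using only the \emph{upper} bound on $|\nabla\psi|$ and no lower bound on $\psi_y$. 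For (b) your surface estimate via Bernoulli's equation and (a) coincides with the paper's, but the propagation of the lower bound from $\partial D$ into $D$ is again asserted rather than proved (``hodograph estimates up to the surface apply'' presupposes $h_p$ bounded near $p=1$, i.e.\ the conclusion); the paper does this step with the weak Harnack inequality (Theorem 8.26 of \cite{GT}) applied to $\psi_y$, after also bounding $\psi_y(x,0)$ from below by a maximum-principle comparison with a stream solution. If you want to salvage a compactness scheme, you would first have to prove the uniform non-stagnation bound near the crest independently --- which is precisely what the paper's weighted-identity argument delivers.
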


\begin{proof} 
To prove (a), it is reasonable to use $w (q,p) = h (q,p) - H (p)$ because $\eta (x)
= h (x, 1)$, $x \in \Bbb R$, and $d_+ (r) = H (1, \lambda_+ (r))$; here $h$ is a
solution of the problem \eqref{eq:hdf1}--\eqref{eq:hdf3} in $S = \{ (q,p): q \in
\Bbb R; p \in (0,1) \}$ and $H$ is defined by formula \eqref{Stream}. Relations
\eqref{PsiHrel} between the derivatives of $\psi$ and $h$ yield that $h$,
$h_p^{-1}$ and $f = h_q h_p^{-1}$ are bounded by a constant depending on $\omega_0$
and $R$ (see Proposition \ref{PropBoundsPsi}).

Let $\phi_0$ be the eigenfunction of the problem \eqref{SL1}--\eqref{SL3}
corresponding to $\mu_0$. If $H = H(p; \lambda_+(r))$, then $\phi_0$ is negative
(see assertion (i) of Proposition \ref{PropK1}). Putting
\[ w_0 (q) = \int_0^1 w (q,p) \phi_0 (p) H_p^{-1} (p) \, \D p \quad \mbox{and} 
\quad f_0 (q) = \int_0^1 f (q, p) \phi_0 (p) \, \D p ,
\]
we get from \eqref{eq:hdf1}--\eqref{eq:hdf3}
\[ \int_0^1 \left[ \frac{H_p^3 f^2}{2} + \frac{(2h_p+H_p) w_p^2}{2h_p^2}
\right] \frac{\phi_0'}{H_p^3} \, \D p = \mu_0 w_0 - f_0' .
\]
Multiplying this identity by $\E^{-\vartheta |q-q_0|}$ with some $\vartheta > 0$ and
$q_0 \in \Bbb R$, and integrating the result over $\Bbb R$, we estimate both
integrals on the right-hand side, thus obtaining
\begin{eqnarray}
&& \int_{S} \left[ \frac{H_p^3 f^2}{2} + \frac{(2h_p+H_p) w_p^2}{2h_p^2} \right]
\frac{ \phi_0'}{H_p^3} \E^{-\vartheta |q-q_0|} \, \D q \D p \nonumber \\ && \leq
|\mu_0| \int_{-\infty}^{+\infty} |w_0| \, \E^{-\vartheta |q-q_0|} \, \D q + C_4
(\omega_0) \vartheta \int_S |f| \, \E^{-\vartheta |q-q_0|} \D q . \label{small:1}
\end{eqnarray}
Here $C_4$ is the same constant as in Proposition \ref{PropK1}, and it arises after
integration by parts in the second integral. Since $w_0$ and $f$ are bounded on
$\bar S$, both integrals on the right-hand side are convergent. Therefore, the first
term is bounded by $C(\omega_0) |\mu_0| / \vartheta$, whereas the second one is less
than or equal to
\[ C (\omega_0) \sqrt{\vartheta} \left[ \int_{S} f^2 \E^{-\vartheta |q-q_0|} \, 
\D q \D p \right]^{1/2} ,
\]
which is a consequence of the Schwarz inequality. Putting $\vartheta =
\sqrt{|\mu_0|}$, we get from \eqref{small:1}
\[ I^2 - C (\omega_0) |\mu_0|^{1/4} I - C (\omega_0) \sqrt{|\mu_0|} \leq 0 , 
\quad \mbox{where} \ I = \left[ \int_{S} f^2 \E^{-\theta |q-q_0|} \right]^{1/2} .
\]
Here, it is also taken into account that $\min_{p \in [0,1]} \phi' (p) \geq C$ (see
assertion (iii) of Proposition \ref{PropK1}); here the last constant is positive and
depends only on $\omega_0$. Then the inequality for $I$ gives that $I \leq C
(\omega_0) |\mu_0|^{1/4}$. Combining this and \eqref{small:1}, we arrive at the
following inequality:
\begin{equation} \label{small:2}
\int_{S} \left[ \frac{H_p^3 f^2} {2} + \frac{(2h_p+H_p) w_p^2}{2h_p^2} \right] 
\frac{ \phi_0'}{H_p^3} \E^{-\theta |q-q_0|} \, \D q \D p \leq C (\omega_0) 
\sqrt{|\mu_0|} .
\end{equation}

Furthermore, by virtue of the Schwarz inequality, we get that
\[ \int_{q_0-1}^{q_0+1} \!\! \int_0^1 |w_p| \, \D p \D q \leq \left[ 
\int_{q_0-1}^{q_0+1} \!\! \int_0^1 \frac{|w_p|^2}{h_p} \, \D p \D q \right]^{1/2}
\left[ \int_{q_0-1}^{q_0+1} \!\! \int_0^1 h_p \, \D p \D q \right]^{1/2} ,
\]
and the last expression is bounded by $C(\omega_0) |\mu_0|^{1/4}$ in view of
\eqref{small:2}. This fact and the inequality $|w(q,1)| \leq \int_0^1 |w_p (q, p)|
\, \D p$ give us that
\[ \int_{q_0-1}^{q_0+1} |\eta(q) - d_+(r)| \, \D q \leq C (\omega_0) |\mu_0|^{1/4} 
\quad \mbox{for all} \ q_0 \in \Bbb R ,
\]
because $w(q,1) = \eta(q) - d_+(r)$.

According to assertion (i) of Proposition \ref{PropK1}, we have that $\mu_0
(\lambda) \to 0$ as $\lambda \to \lambda_c$, and so the last inequality implies that
for every $\epsilon > 0$ there exists $r' = r' (\epsilon, M, \omega_0)$ such that
$|\eta (q) - d_+ (r)| < \epsilon$ for all $q \in \Bbb R$ and all $r \in (r_c, r')$.
Indeed, $|\eta' (q)| \leq M$ a.e. on $\Bbb R$ being the second component of a
solution of problem ${\rm P_r^M}$.

\vspace{2mm}

Now we turn to assertion (b) for a solution of problem ${\rm P_r^M}$. First, we show
that $\psi_y$ is separated from zero on $\partial D$ when $r$ is close to $r_c$.

Since the boundary conditions \eqref{eq:kcp} and \eqref{eq:bep} imply that
\[ \left\{ 1 + [\eta'(x)]^2 \right\} \psi_y^2 (x, \eta(x)) = 3r - 2\eta (x) 
\quad \mbox{for all} \ x \in \Bbb R ,
\]
we have
\[ [\psi_y (x, \eta(x))]^2 \geq [3r - 2\eta (x)] / (1+M^2) \quad \mbox{for all} 
\ x \in \Bbb R .
\]
Let us show that the expression on the right-hand side is separated from zero by a
positive constant depending only on $\omega_0$. For this purpose we write
\[ 3r - 2 \eta = 3r - 2d_+(r) + 2 [\eta - d_+(r)] = [\lambda_+ (r)]^2 - 2 \Omega (1) 
+ 2 [\eta - d_+(r)] ,
\]
where the last equality is a consequence of \eqref{eq:calR}. In view of
\eqref{BdR} we have
\[ [\lambda_+ (r)]^2 - \Omega (1) = \lambda_0^2 - \Omega(1) + [\lambda_+ (r)]^2 
- \lambda_0^2 \geq (\lambda_c - \lambda_0) \lambda_c / 4 ,
\]
but assertion (a) of Theorem \ref{propSmall} with $\epsilon = (\lambda_c -
\lambda_0) \lambda_c / 16$ yields that there exists $r'(M,\omega_0)$ such that
\[ 2 |\eta - d_+(r)| \leq (\lambda_c - \lambda_0) \lambda_c
/ 8 \quad \mbox{for all} \ r \in (r_c,r'] .
\]
Therefore,
\[ [\psi_y (x, \eta(x))]^2 \geq (\lambda_c - \lambda_0) \lambda_c / [8 (1+M^2)]
\quad \mbox{for all} \ x \in \Bbb R .
\]

To estimate $\psi_y$ near the bottom we use the problem
\eqref{eq:hdf1}--\eqref{eq:hdf3} because the maximum principle (see \cite{C},
Theorem 1.4) is applicable to it. We choose $\lambda_*$ so that $\sup_{x \in \Bbb R}
\eta (x) = H(p; \lambda_*)$, and put $w (q, p) = h (q, p) - H (p; \lambda_*)$. Since
$w (q, 0) = 0$ for all $q \in \Bbb R$ and $w (q, 1) < 0$, the maximum principle
implies that $w < 0$ in $S$ and $w_p (q, 0) < 0$. In other words, $h_p (q,0) < H_p
(0, \lambda_*)$, and so
\[ \psi_y(x,0) \geq [H_p (0, \lambda_*)]^{-1} \geq [H_p (0,\lambda_+(r))]^{-1} \geq
\sqrt{(\lambda_c - \lambda_0) \lambda_c / 4} .
\]
Here the second inequality is a consequence of \eqref{PsiHrel}.

The obtained estimates of $\psi_y$ on $\partial D$ allow us to apply Theorem 8.26,
\cite{GT}, to the equation $\nabla^2 \psi_y + \omega'(\psi) \psi_y = 0$, thus
demonstrating that $\psi_y$ is separated from zero in a neighbourhood of $\partial
D$. According to this theorem we have
\begin{equation} \label{T4:3}
\| \psi_y^- \|_{L^2 (B_{6 a} (Z))} \leq C (\omega_1) \inf_{B_{3 a} (Z)} \psi_y^- ,
\end{equation}
where $Z$ is an arbitrary point of $\partial D$,
\[ \psi_y^{-} (x, y) = \begin{cases} \min \{\psi_y, m\} \ \mbox{when} \ (x,y) \in D, 
\\ m \ \mbox{when} \  (x,y) \notin D \end{cases}
\]
and $m = \inf_{B_{12 a} (Z) \cap \partial D} \psi_y$. We recall that $B_a (Z)$
denotes the open circle of radius $a$ centred at $Z$.

It is essential that the constant in \eqref{T4:3} depends only on $\omega_1$ and is
independent of $a$. Indeed, Lemma \ref{EtaBound} and inequality \eqref{BdR} imply
that 
\[ C_1 (\omega_0) < 3 a < C_2 (\omega_0) ,
\]
where both constants are positive and depend only on $\omega_0$. Since the free
surface is a Lipschitz curve with the constant $\sqrt{1+M^2}$, a large part of the
ball $B_{6 a} (Z)$ belongs to the complement of $D$ for every $Z \in \partial D$
(this is obvious for the flat bottom), and so $\| \psi_y^- \|_{L^2 (B_{6 a} (Z))}
\geq C (M) \, m$. Combining this inequality and \eqref{T4:3}, we get that
\[ \inf_{B_{3 a} (Z) \cap D} \psi_y \geq C (M, \omega_1) m \quad \mbox{for every} \ 
Z \in \partial D .
\]
The last expression is greater than a certain positive constant $C_1 (M, \omega_1)$,
and so the required inequality holds in the whole domain $D$ because it is covered
by circles $B_{3 a} (Z)$ with $Z \in \partial D$. This completes the proof.
\end{proof}

Now, we are going to estimate H\"older norms of derivatives for solutions of problem
${\rm P_r^M}$. A bound for them will be obtained under the assumption that the
horizontal component of the velocity field is uniformly separated from zero in $\bar
D$.

\begin{proposition} \label{RegProp} 
Let $R \in (r_c, r_0)$ and $M > 0$. If problem\/ ${\rm P_r^M}$ with $r \in (r_c, R]$
has a solution $(\psi, \eta)$, whose first component satisfies the inequality
$\psi_y > \delta$ in $\bar D$ with some $\delta > 0$, then there exist $\alpha \in
(0,1)$ and $C > 0$ (both depending only on $M$, $R$, $\delta$ and $\omega_1$) such
that the following inequalities hold:
\begin{equation} \label{eq:glob}
\| \psi \|_{C^{2,\alpha} (\bar D)}  \leq C \quad \mbox{and} \quad \| \eta \|_{C^{2,\alpha} 
(\Bbb R)} \leq C .
\end{equation}
\end{proposition}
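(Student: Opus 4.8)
The plan is to establish the $C^{2,\alpha}$ bounds by combining the partial hodograph formulation with standard Schauder theory, working in the fixed strip $S$ rather than the variable domain $D$. First I would transfer the problem to the function $h(q,p)$ solving \eqref{eq:hdf1}--\eqref{eq:hdf3}. The hypothesis $\psi_y > \delta$ together with Proposition \ref{PropBoundsPsi} and the relations \eqref{PsiHrel} gives two-sided bounds $0 < c_1 \leq h_p \leq c_2$ and $|h_q| \leq c_3$ on $\bar S$, with constants depending only on $M$, $R$, $\delta$ and $\omega_0$; the lower bound on $h_p$ is exactly $\psi_y < \delta^{-1}$ inverted and the upper bound is $\psi_y^{-1}$ bounded below via $\psi_y \leq |\nabla\psi| \leq C(\omega_0,R)$. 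This makes equation \eqref{eq:hdf1} a uniformly elliptic quasilinear equation for $h$ in divergence form, with the coefficient matrix $a^{ij}(\nabla h)$ depending smoothly on $\nabla h$ in the region where $h_p$ is bounded away from $0$ and $\infty$, and with the Lipschitz right-hand side $\omega(p)$.

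Next I would obtain an interior and boundary-adjacent $C^{1,\beta}$ estimate for $h$. Since the slope of $\eta$ is bounded by $M$ and $h$ is globally bounded, $h$ is a bounded weak solution of a uniformly elliptic equation in divergence form with bounded measurable coefficients; De Giorgi--Nash--Moser gives interior $C^{\beta}$ bounds for $h$, and then, because the gradient is already bounded, the equation can be differentiated (or one applies the De Giorgi--Nash--Moser estimate to difference quotients) to get $h \in C^{1,\beta}_{\mathrm{loc}}(S)$ for some $\beta \in (0,1)$. Near the bottom $p=0$ the boundary condition \eqref{eq:hdf2} is the homogeneous Dirichlet condition on a flat piece, so the same estimates hold up to $\{p=0\}$ by even reflection or standard boundary De Giorgi--Nash--Moser. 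Near the top $p=1$ the condition \eqref{eq:hdf3} is a nonlinear oblique-derivative (in fact a first-order boundary relation) condition; rewriting it as $h_p = (1+h_q^2)^{1/2}(3r-2h)^{-1/2}$, valid since $3r - 2h = 3r - 2\eta > 0$ by Lemma \ref{EtaBound} and the amplitude bounds, one sees it is a smooth nonlinear Neumann-type condition with $C^{0,1}$ data, and the boundary $C^{1,\beta}$ estimate of Lieberman--Trudinger type (or the treatment of oblique problems in \cite{GT}, Chapter 6, combined with the flatness of $\{p=1\}$) applies. Thus $h \in C^{1,\beta}(\bar S)$ uniformly.

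Once $h \in C^{1,\beta}(\bar S)$, the coefficients $a^{ij}(\nabla h)$ of \eqref{eq:hdf1} are themselves $C^{0,\beta}(\bar S)$, the lower-order term $\omega(p)$ is Lipschitz hence $C^{0,\beta}$, and the boundary condition on $p=1$ has $C^{1,\beta}$ coefficients; the classical Schauder estimates for linear equations (interior: \cite{GT}, Theorem 6.2; boundary: \cite{GT}, Theorems 6.26, 6.30, adapted to the oblique condition) then upgrade $h$ to $C^{2,\alpha}(\bar S)$ with $\alpha = \beta$ and a constant depending only on $M$, $R$, $\delta$ and $\omega_1$ (the $\omega_1$ dependence entering through the $C^{0,1}$, hence $C^{0,\alpha}$, norm of $\omega$). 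Restricting to $p=1$ gives $\eta(q) = h(q,1) \in C^{2,\alpha}(\Bbb R)$ with the same bound, and inverting the partial hodograph change of variables — which is now a $C^{2,\alpha}$ diffeomorphism because $h_p$ is bounded away from zero and $h \in C^{2,\alpha}$ — transfers the estimate back to $\psi$ on $\bar D$, yielding \eqref{eq:glob}.

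The main obstacle I anticipate is the boundary regularity at $p=1$: the Bernoulli condition \eqref{eq:hdf3} is nonlinear and couples $h_p$, $h_q$ and $h$, so one cannot simply quote the flat-boundary Schauder theorem for the Dirichlet or linear-Neumann problem. One must either linearize it carefully (writing the condition as $\mathcal{B}(h,\nabla h)=0$ with $\partial \mathcal{B}/\partial h_p \neq 0$, which holds precisely because $3r-2h$ is bounded away from $0$) and invoke the theory of nonlinear oblique-derivative problems, or introduce an auxiliary harmonic-type extension that flattens the condition. Keeping track that all constants depend only on the stated quantities — in particular that the ellipticity constants come from Lemma \ref{EtaBound} and Proposition \ref{PropBoundsPsi} and are therefore controlled — is the bookkeeping that requires care, but no new idea beyond the uniform bounds already established.
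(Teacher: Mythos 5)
Your proposal follows essentially the same route as the paper: pass to the hodograph unknown $h$, use $\psi_y>\delta$ together with Proposition \ref{PropBoundsPsi} to get two-sided bounds on $h_p$ over $\bar S$, derive a uniform $C^{1,\alpha}(\bar S)$ bound from the quasilinear divergence structure with the nonlinear Bernoulli condition at $p=1$, bootstrap to $C^{2,\alpha}(\bar S)$ by oblique-derivative Schauder-type theory, and transfer back through the hodograph map; the paper merely cites Ladyzhenskaya--Uraltseva (Ch.\ 10, Theorems 2.1 and 3.1) and Agmon--Douglis--Nirenberg where you invoke De Giorgi--Nash--Moser, Lieberman--Trudinger and \cite{GT}. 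The only slips are cosmetic: $\psi_y>\delta$ yields the \emph{upper} bound $h_p<\delta^{-1}$ (the lower bound on $h_p$ comes from $\psi_y\le C(\omega_0,R)$), and the needed lower bound on $3r-2h$ at $p=1$ follows directly from Bernoulli's equation with $\psi_y>\delta$ rather than from Lemma \ref{EtaBound}.
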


\begin{proof} 
The assumptions imposed on $(\psi, \eta)$ and Proposition \ref{PropBoundsPsi} imply
that the corresponding solution $h$ of problem \eqref{eq:hdf1}--\eqref{eq:hdf3}
satisfies the following inequalities
\begin{equation} \label{restr}
0 < C (\omega_1, R) \leq h_p \leq \delta^{-1} \ \ \ \text{in} \ \ \ \bar S = \Bbb R
\times [0,1] .
\end{equation}
Let us show that there exists $\alpha > 0$ such that the inequality
\begin{equation} \label{C1alpha}
\| h \|_{C^{1,\alpha} (\bar S)} \leq C' (R, \omega_1) 
\end{equation}
holds with a positive constant $C' (R, \omega_1)$.

Since $\partial S$ consists of two straight lines and the Dirichlet boundary
condition is fulfilled when $p = 0$, the only difficulty is to prove the estimate
near the line $\{ q \in \Bbb R , p = 1 \}$ (cf. \cite{GT}, Theorems 13.1 and 13.2).
To overcome this difficulty we use the local estimates obtained in \cite{LU} (see
the proof of Theorem 2.1, ch. 10) for the quasilinear equation \eqref{eq:hdf1}
written in divergence form and complemented by the boundary condition
\eqref{eq:hdf3}; namely:
\begin{eqnarray}
&& \!\!\!\!\!\!\!\!\!\!\!\!\!\!\!\!\!\!\!\!\!\! [a_1 (h_q, h_p)]_q + [a_2 (h_q,
h_p)]_p - \omega = 0 , \quad [a_2 (h_q, h_p) + \phi (h)]_{p=1} = 0 ,
\label{16sept1} \\ && \!\!\!\!\!\!\!\!\!\!\!\!\!\!\!\!\!\!\!\!\!\! \mbox{where}
\ a_ 1 (h_q, h_p) = \frac{h_q}{h_p} , \ a_2 (h_q, h_p) = - \frac{1 + h_q^2}{2
h_p^2} \ \mbox{and} \ \phi (h) = 2h - 3r . \label{16sept2}
\end{eqnarray}
Indeed, all conditions on $a_1$, $a_2$ and $\phi$ required in the mentioned theorem
are fulfilled in our case because the imposed conditions guarantee that the
two-sided restriction \eqref{restr} holds which, in its turn, yields
\eqref{C1alpha}.

The next step is to apply Theorem 11.2, \cite{ADN}, to the problem \eqref{16sept1}
and \eqref{16sept2}, which gives that $h \in C^{2,\alpha} (\bar S)$, but does not
provide a bound for the norm. To obtain such a bound we write the problem as
follows:
\begin{eqnarray*}
&& h_{qq} - 2h_q h_p^{-1} h_{qp} + (1+h_q^2) h_p^{-2} h_{pp} + \omega (p) h_p = 0 \
\ \ \text{in} \ S ; \\ && (1+h_q^2)h_p^{-2} - (3r - 2h) = 0 \ \ \text{when} \ p = 1;
\quad h = 0 \ \ \text{when} \ p = 0 .
\end{eqnarray*}
Now, we are in a position to use Theorem 3.1, \cite{LU}, ch. 10, which gives a bound
for $\| h \|_{C^{2,\alpha} (\bar S)}$, depending only on $M$, $R$, $\delta$ and
$\omega_1$. Therefore, in view of \eqref{PsiHrel} and \eqref{restr} both
inequalities \eqref{eq:glob} are true.
\end{proof}

\section{Proof of Theorem \ref{StabThm}}

Let us prove the following assertion that is slightly stronger than Theorem
\ref{StabThm}.

\begin{theorem} \label{StabThm2'}
There exist $r'' \in (r_c, r_0)$ and positive $C$ and $\theta$ $(r''$ depends on $M$
and $\omega_1$, whereas $C$ and $\theta$ depend only on $\omega_0)$ such that any
two solutions $(\psi^{(1)}, \eta^{(1)})$ and $(\psi^{(2)}, \eta^{(2)})$ of problem
$\rm P_r^M$ with $r \in (r_c, r'']$ satisfy the inequality
\begin{eqnarray} 
&& \!\!\!\!\!\!\!\!\!\!\!\!\!\!\!\!\!\!\!\!\!\!\!\! \int_{S}  \big| \nabla h^{(1)} -
\nabla h^{(2)} \big|^2 \, \E^{-\theta |x-x_0|} \, \D q \D p \nonumber \\ && \leq C
\left[ |\eta^{(1)} (q_0) - \eta^{(2)} (q_0)|^2 + |\eta_x^{(1)} (q_0) - \eta_x^{(2)}
(q_0)|^2 \right] . \label{eq:stab'}
\end{eqnarray}
Here $q_0 \in \Bbb R$ is arbitrary, whereas $h^{(1)}$ and $h^{(2)}$ correspond to
$(\psi^{(1)}, \eta^{(1)})$ and $(\psi^{(2)}, \eta^{(2)})$ through the partial
hodograph transform.
\end{theorem}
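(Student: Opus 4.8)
The plan is to derive an equation for the difference $w = h^{(1)} - h^{(2)}$, which satisfies a \emph{linear} second-order elliptic equation with bounded measurable coefficients (since the quasilinear operator in \eqref{eq:hdf1} has a divergence structure and we may write the difference of the two equations as a divergence-form equation in $w$), together with a linear oblique boundary condition on $p=1$ obtained by subtracting the two copies of \eqref{eq:hdf3}, and the homogeneous Dirichlet condition $w(q,0)=0$. The coefficients of this linear problem are controlled because of Proposition \ref{PropBoundsPsi} and the nearcritical bounds in Theorem \ref{propSmall}(b) together with Proposition \ref{RegProp}: the velocity field is bounded and $\psi_y$ (equivalently $h_p^{-1}$) is bounded and bounded away from zero, so $C^{-1}\le h_p^{(i)}\le C$ uniformly. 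This is what makes the difference equation uniformly elliptic with the right constants.

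Next I would run the same weighted-energy scheme used for Theorem \ref{propSmall}(a). Project $w$ onto the fundamental eigenfunction $\phi_0$ of the Sturm--Liouville problem \eqref{SL1}--\eqref{SL3} (built from the stream solution $H(p;\lambda_+(r))$), setting $w_0(q) = \int_0^1 w(q,p)\phi_0(p)H_p^{-1}\,\D p$, and likewise project the difference of the ``flux'' quantities $f^{(i)} = h_q^{(i)}/h_p^{(i)}$. Testing the difference equation against $\phi_0 H_p^{-1}$ and using \eqref{SL1}--\eqref{SL2} produces an identity whose right-hand side is $\mu_0 w_0 - (\text{difference of flux terms})'$ plus quadratic remainder terms that are $O$ of the energy itself; crucially the spectral gap $\mu_1 \ge C_2 > 0$ from Proposition \ref{PropK1}(ii) lets us absorb the component of $w$ orthogonal to $\phi_0$. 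Multiplying by $\E^{-\theta|q-q_0|}$ with $\theta = \sqrt{|\mu_0|}$ (as in \eqref{small:1}--\eqref{small:2}), integrating over $\Bbb R$, integrating by parts in the flux term, and using the Schwarz inequality and the sign of $\phi_0'$ (assertion (iii) of Proposition \ref{PropK1}), one obtains a quadratic inequality for $I = \big[\int_S |\nabla w|^2 \E^{-\theta|q-q_0|}\big]^{1/2}$ of the form $I^2 \le C\,\sqrt{|\mu_0|}\,I + (\text{boundary data})$. The key difference from the proof of Theorem \ref{propSmall}(a) is that now there is no ``source'' $-3r + \dots$ forcing $w$ away from zero; instead the only inhomogeneity enters through the boundary condition on $p=1$, and the boundary terms generated by the integration by parts are exactly $|\eta^{(1)}(q_0)-\eta^{(2)}(q_0)|^2 + |\eta^{(1)}_x(q_0)-\eta^{(2)}_x(q_0)|^2$ times a constant. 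Since $|\mu_0|\to 0$ as $r\to r_c$, for $r$ close enough to $r_c$ the coefficient $C\sqrt{|\mu_0|}$ in front of $I$ is less than $1$ (indeed we only need it bounded, but smallness is automatic), and the quadratic inequality yields $I^2 \le C[\,|\eta^{(1)}(q_0)-\eta^{(2)}(q_0)|^2 + |\eta^{(1)}_x(q_0)-\eta^{(2)}_x(q_0)|^2\,]$, which is \eqref{eq:stab'}. Theorem \ref{StabThm} then follows because $|\eta^{(i)}(x)-\eta^{(i)}_{\text{stream}}|$ and the derivatives transform back to the physical variables with uniformly bounded factors (again from the two-sided bound on $h_p$), and $|\eta'|\le M$ a.e. controls the passage between the $L^2$-in-$x$ statement and pointwise values.

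The main obstacle I anticipate is handling the boundary condition on $p=1$ correctly: unlike the interior equation, the difference of the two Bernoulli conditions \eqref{eq:hdf3} is where the prescribed data $\eta^{(i)}(q_0), \eta^{(i)}_x(q_0)$ actually enter, and one must track the boundary terms produced when integrating $\int_{\Bbb R}(\dots)'\E^{-\theta|q-q_0|}\,\D q$ by parts so that they collapse to precisely the two squared quantities on the right of \eqref{eq:stab'} rather than to a weighted integral of $w$ on $p=1$. This requires expressing $w(q,1)$ and $w_q(q,1)$ via the flux projection and the boundary relation, and using that $\theta = \sqrt{|\mu_0|}\to 0$ to ensure the contribution at $q\ne q_0$ is a genuine decaying tail that gets absorbed into the left-hand side. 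The quadratic remainder terms coming from the nonlinearity (products of $\nabla w$ with $\nabla w$ and with the small quantity $h_p - H_p$) are the secondary technical point, but Theorem \ref{propSmall}(a) makes $h_p - H_p$ as small as we like, so these are controlled by a small multiple of $I^2$ and absorbed.
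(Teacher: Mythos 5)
There is a genuine gap, and it sits exactly at the point you flag as your ``main obstacle''. Your weighted-energy scheme (project the difference onto $\phi_0$, use the spectral gap $\mu_1\geq C_2$ for the orthogonal part, multiply by $\E^{-\theta|q-q_0|}$ and integrate by parts) cannot, by itself, make the point values $\eta^{(1)}(q_0)-\eta^{(2)}(q_0)$ and $\eta^{(1)}_x(q_0)-\eta^{(2)}_x(q_0)$ appear on the right-hand side. The weight $\E^{-\theta|q-q_0|}$ is continuous at $q=q_0$ (only its derivative jumps), so integrating a $q$-derivative against it over $\Bbb R$ produces terms of the form $\theta\int(\cdot)\,\mathrm{sgn}(q-q_0)\E^{-\theta|q-q_0|}\D q$ and nothing evaluated at $q_0$; moreover the difference of the two Bernoulli conditions is a relation holding for \emph{all} $q$, so the point $q_0$ is not distinguished anywhere in your identity. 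The deeper structural problem is the $\phi_0$-component $\xi_0$ of the difference: since $\mu_0<0$, this component solves a weakly forced oscillator equation $\xi_0''+|\mu_0|\xi_0=g$, i.e.\ it is a \emph{neutral} direction that the exponential-weight energy inequality cannot control (in the proof of Theorem \ref{propSmall}(a) the analogous term $|\mu_0|\int|w_0|\E^{-\vartheta|q-q_0|}\D q$ was disposed of using the a priori boundedness of $w_0$, which gave the harmless $O(\sqrt{|\mu_0|})$; for the difference of two solutions you must bound $\xi_0$ itself, and it is not small relative to the data). The paper handles this by three separate steps your proposal does not contain: the orthogonal part is slaved to $\xi_0$ with a small factor (Lemma \ref{StabLemma}); the weighted norm of $\xi_0$ is bounded by its Cauchy data $\xi_0(q_0),\xi_0'(q_0)$ via the explicit variation-of-parameters representation for the oscillator together with Lemma \ref{fmu} (Lemma \ref{CauchyLemma1}); and those Cauchy data are compared with $\xi(q_0,1),\xi_q(q_0,1)$, i.e.\ with the surface data, by pointwise local $C^{1,\alpha}$ estimates of Agmon--Douglis--Nirenberg type for $\widetilde\xi$ (Lemma \ref{CauchyLemma2}) — a pointwise elliptic estimate, not an energy estimate, is indispensable here because $\xi(q_0,1)=\xi_0(q_0)\phi_0(1)+\widetilde\xi(q_0,1)$ and you must control $\widetilde\xi$ and $\widetilde\xi_q$ at the single point $q_0$.

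Two further points. Your choice $\theta=\sqrt{|\mu_0|}$ contradicts the statement being proved: $\theta$ must be a fixed positive constant depending only on $\omega_0$ (in the paper it comes from the coercivity constant of ${\cal L}$ and $\frak m$), whereas $\sqrt{|\mu_0|}\to0$ as $r\to r_c$, which makes the left-hand side grow like $1/\theta$ and the claimed inequality with an $r$-independent constant unobtainable by this route. Also, linearizing the difference of the two quasilinear problems directly in the physical-type variables is workable, but the paper's formulation as the first-order system \eqref{eq:weak1}--\eqref{eq:weak2} for $(w,f)$ with $f=h_q/h_p$ is what makes the spectral splitting into the two-dimensional neutral subspace spanned by $(\phi_0,0)$, $(0,\phi_0H_p^{-1})$ and its complement clean, and what reduces the neutral part to the scalar second-order ODE \eqref{eq:w02} to which the Cauchy-data argument applies. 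Your smallness input (Theorem \ref{propSmall} plus Proposition \ref{RegProp}, i.e.\ Lemma \ref{lemma_small}) and the use of the spectral gap are in line with the paper, but without the oscillator/Cauchy-data mechanism and the pointwise ADN step the proof does not close.
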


It is easy to see that Theorem \ref{StabThm} is a consequence of Theorem
\ref{StabThm2'}. Our proof of the latter theorem is based on another form of problem
\eqref{eq:hdf1}--\eqref{eq:hdf3}; namely, a system of Hamilton's equations proposed
in \cite{GW} (see \eqref{eq:v1} and \eqref{eq:v2} below).

\vspace{-2mm}

\subsection{A System of First Order \\ Equivalent to Problem
\eqref{eq:hdf1}--\eqref{eq:hdf3}}

Let us write a system equivalent to the problem \eqref{eq:hdf1}--\eqref{eq:hdf3},
for which purpose we complement $h$ by the unknown function $f = h_q / h_p$, thus
obtaining for $(q, p) \in S$:
\begin{numcases}{}
h_q = f h_p , \label{eq:v1} \\
f_q = - \frac 1 2 {\cal A} (f, h) + \omega (p) . \label{eq:v2}
\end{numcases}
Here $\cal A$ is a nonlinear operator which is convenient to define by virtue of the
integral identity
\[ \int_0^1 {\cal A}( f, h) \Phi \, \D p = \int_0^1 (f^2 + h_p^{-2}) \Phi_p \, \D p - 
[ \{3 r - 2 h (q, p)\} \Phi (p)]_{p=1} ,
\]
which must hold for every $\Phi \in H^{1}_0 (0,1)$; this space consists of
continuous functions that vanish at $p = 0$ and have derivatives in $L^2 (0, 1)$.
The last identity is well defined because the functions $f$ and $h_p^{-1}$ are
bounded when $(\psi, \eta)$ is a solution of problem $\rm P_r^M$ (see Proposition
\ref{PropBoundsPsi}).

This weak formulation allows us to use the technique developed in \cite{KMz1} (see
also \cite{KMz2}) for ordinary differential equations with operator coefficients. It
is worth mentioning that the explicit form of the operator $\cal A$ (the
differential expression and boundary operators) can be found in \cite{GW}.

\vspace{-2mm}

\subsubsection{Linearization Near a Stream Solution}

Let us linearize the equations \eqref{eq:v1} and \eqref{eq:v2} near the stream
solution $H(p, \lambda_+ (r))$. For this purpose we put $h = H + w$, thus obtaining
that the pair $(w, f)$ must satisfy the system
\begin{numcases}{}
w_q - f H_p = {\cal N}_1 (w, f) , \label{eq:weak1} \\
f_q - {\cal L} (w) = {\cal N}_2 (w, f) . \label{eq:weak2}
\end{numcases}
Here $(q, p) \in S$, ${\cal N}_1(w, f) = f w_p$, whereas the operators $\cal L$ and
${\cal N}_2$ are defined in the same way as $\cal A$. Their action on functions
given on the cross-section of $S$ is described by the following integral identities:
\begin{eqnarray*}
&& \int_0^1 {\cal L} (w) \Phi \, \D p = \int_0^1 \frac{w_p \Phi_p}{H_p^3} \, \D p -
[w \Phi]_{p=1} , \\ && \int_0^1 {\cal N}_2 (w) \Phi \, \D p = - \int_0^1 \left[
\frac{w_p^3}{H_p^3h_p^2} + \frac 3 2 \frac{w_p^2}{H_p^2h_p^2} + \frac{f^2}{2}
\right] \! \Phi_p \, \D p .
\end{eqnarray*}
They must be fulfilled for every $\Phi \in H^1_0(0, 1)$. Note that $\cal L$ is
nothing else than the operator of the spectral problem \eqref{SL1}--\eqref{SL3}
represented in a weak form.

Using Theorem \ref{propSmall} and Proposition \ref{RegProp}, we prove the following
assertion.

\begin{lemma} \label{lemma_small} 
For any $\epsilon, M > 0$ there exists $r^* \in (r_c, r_0)$ depending only on $M,
\epsilon$ and $\omega_1$ such that if $(\psi,\eta)$ is a solution of problem $\rm
P_r^M$ with $r \in (r_c,r^*]$, then the following inequality holds:
\[ \|w\|_{C^2(\bar S)} + \|f\|_{C^1(\bar S)} < \epsilon .
\]
\end{lemma}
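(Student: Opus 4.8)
The plan is to combine the amplitude bound from assertion (a) of Theorem~\ref{propSmall}, the separation $\psi_y > \delta$ from assertion (b), and the a~priori $C^{2,\alpha}$-bounds from Proposition~\ref{RegProp} to control $w = h - H(\cdot,\lambda_+(r))$ and $f = h_q/h_p$ in the strong norms. First I would fix $\epsilon$ and $M$. By assertion (b) of Theorem~\ref{propSmall} there are $r''(M,\omega_1) > r_c$ and $\delta(M,\omega_1) > 0$ such that $\psi_y > \delta$ in $D$ whenever $r \in (r_c, r'']$; applying Proposition~\ref{RegProp} with $R = {\cal R}([\lambda_c+\lambda_0]/2)$ then gives $\|h\|_{C^{2,\alpha}(\bar S)} \le C_0(M,\omega_1)$ for all such $r$, and in particular $h_p$ is bounded above and bounded below by a positive constant depending only on $M$ and $\omega_1$. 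The same is true for the stream solution: $H_p(p,\lambda_+(r)) = (\lambda_+^2 - 2\Omega(p))^{-1/2}$ is smooth in $p$ with derivatives bounded in terms of $\omega_1$, uniformly for $\lambda_+(r)$ in the compact range forced by \eqref{BdR}. Hence $\|w\|_{C^{2,\alpha}(\bar S)} + \|f\|_{C^{1,\alpha}(\bar S)} \le C_1(M,\omega_1)$.

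The second ingredient is a $C^0$-smallness statement for $w$ and $f$, which I would then upgrade to a $C^2$- and $C^1$-smallness statement by interpolation. For $w$ itself: since $\eta(x) = h(x,1)$ and $d_+(r) = H(1,\lambda_+(r))$, assertion (a) of Theorem~\ref{propSmall} gives $\sup_{q}|w(q,1)| < \epsilon_1$ for $r$ close enough to $r_c$; combined with the Dirichlet condition $w(q,0) = 0$ and the bound on $w_p$, this controls $\sup_{\bar S}|w|$ by a constant times $\epsilon_1 + \sup|w_p|$, which is not yet small, so one needs the sharper integral bound from the proof of Theorem~\ref{propSmall}(a): there it is shown that $\int_{S}[\tfrac{H_p^3 f^2}{2} + \tfrac{(2h_p+H_p)w_p^2}{2h_p^2}]\tfrac{\phi_0'}{H_p^3}\,\E^{-\theta|q-q_0|}\,\D q\D p \le C(\omega_0)\sqrt{|\mu_0|}$, and since $\phi_0' \ge C > 0$ and $h_p, H_p$ are two-sidedly bounded, this yields $\int_{q_0-1}^{q_0+1}\int_0^1 (w_p^2 + f^2)\,\D p\D q \le C(M,\omega_1)\sqrt{|\mu_0|}$ for every $q_0$. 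By Proposition~\ref{PropK1}(i), $|\mu_0| \le C_1|\lambda - \lambda_c| \to 0$ as $r \to r_c$, so the local $L^2$-norms of $w_p$ and $f$ are uniformly small. Together with $w(q,0)=0$ this also makes the local $L^2$-norm of $w$ small.

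The final step is the interpolation. Write $g$ for either $w$ or $f$. On each unit square $S_{q_0} = (q_0-1,q_0+1)\times(0,1)$ we have a uniform bound on $\|g\|_{C^{2,\alpha}(\overline{S_{q_0}})}$ (for $w$) resp.\ $\|g\|_{C^{1,\alpha}}$ (for $f$) from the first paragraph, and a small bound on $\|g\|_{L^2(S_{q_0})}$ from the second. The Gagliardo--Nirenberg interpolation inequality on the bounded Lipschitz domain $S_{q_0}$,
\[
\|g\|_{C^2(\overline{S_{q_0}})} \le C\,\|g\|_{C^{2,\alpha}(\overline{S_{q_0}})}^{1-\beta}\,\|g\|_{L^2(S_{q_0})}^{\beta}
\]
for a suitable $\beta = \beta(\alpha) \in (0,1)$ (and similarly with $C^1$ on the left for $f$), then makes $\|w\|_{C^2}$ and $\|f\|_{C^1}$ small on each $S_{q_0}$, with a bound independent of $q_0$ by translation invariance; taking the supremum over $q_0$ gives $\|w\|_{C^2(\bar S)} + \|f\|_{C^1(\bar S)} < \epsilon$. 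One then defines $r^* = r^*(M,\epsilon,\omega_1)$ to be the infimum of the thresholds arising from $r''$, from the application of Theorem~\ref{propSmall}(a), and from requiring $C(M,\omega_1)|\mu_0|^{\beta/2} < \epsilon$; note $r^*$ depends only on $M$, $\epsilon$ and $\omega_1$ as claimed, because the constants $C_1, C'$ in Proposition~\ref{PropK1} depend only on $\omega_0$ and the regularity constants depend only on $M$, $R$, $\delta$, $\omega_1$, with $R$ and $\delta$ themselves functions of $M$ and $\omega_1$. The main obstacle I anticipate is bookkeeping the dependence of constants: one must check carefully that nothing in the chain of estimates secretly depends on $r$ in a way that degenerates as $r\to r_c$ — in particular that $\delta(M,\omega_1)$ stays bounded below and $\lambda_+(r)$ stays in a compact subset of $(\lambda_0,\lambda_c)$ so that $H_p$ and its derivatives remain controlled — and that the interpolation exponent $\beta$ and the implied constants are genuinely geometric, i.e.\ depend only on the fixed unit square and on $\alpha = \alpha(M,\omega_1)$, not on the solution.
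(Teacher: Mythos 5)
Your proposal is correct, and its skeleton is the same as the paper's: restrict $r$ so that Theorem~\ref{propSmall}(b) gives $\psi_y>\delta(M,\omega_1)$, apply Proposition~\ref{RegProp} (with $R$ fixed by \eqref{BdR} and $\delta=\delta(M,\omega_1)$) to get a uniform $C^{2,\alpha}$ bound on $h$, hence on $w$ and $f$, independent of how close $r$ is to $r_c$, and then interpolate a bounded strong norm against a small weak norm. The only genuine difference is where the weak-norm smallness comes from. The paper takes the shorter route you half-started and then abandoned: since $w=h-H$ is the difference of two solutions of \eqref{eq:hdf1} with the same $\omega(p)$, it satisfies a uniformly elliptic equation with no zeroth-order term, so the maximum principle converts the boundary data $w(q,0)=0$ and $|w(q,1)|=|\eta-d_+(r)|<\epsilon_*$ (Theorem~\ref{propSmall}(a)) directly into $\|w\|_{L^\infty(\bar S)}<\epsilon_*$, and the interpolation is then between $C^{2,\alpha}$ and $L^\infty$, with $\|f\|_{C^1}$ small as a by-product of $\|w\|_{C^2}$ small (recall $f=w_q/h_p$ and $h_p$ is two-sidedly bounded). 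You instead reuse the weighted integral estimate \eqref{small:2} from the proof of Theorem~\ref{propSmall}(a) together with Proposition~\ref{PropK1}(i) to get local $L^2$-smallness of $w_p$, $f$ (and hence of $w$, via $w(q,0)=0$) of order $|\mu_0|^{1/4}$, and interpolate $C^2$ (resp.\ $C^1$) between $C^{2,\alpha}$ (resp.\ $C^{1,\alpha}$) and $L^2$ on unit squares; this is legitimate because \eqref{small:2} is established independently of the present lemma, and because after invoking (b) you do have the two-sided bound \eqref{restr} on $h_p$ needed to bound the coefficients in \eqref{small:2} from below (note the constant then depends on $M,\omega_1$, not just $\omega_0$, which is harmless here). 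Your route is slightly longer and gives a quantitative rate in $|\mu_0|$, whereas the paper's maximum-principle step is cleaner and feeds the $\epsilon_*$ from Theorem~\ref{propSmall}(a) straight into the interpolation (their exponent $\gamma=\alpha/(2(1+\alpha))$); also, your ``Gagliardo--Nirenberg'' inequality should more precisely be the standard two-step H\"older/Lebesgue interpolation ($C^2$ between $C^{2,\alpha}$ and $C^0$, then $C^0$ between $C^{0,\alpha}$ and $L^2$) on a fixed unit square, but that is a matter of labelling, not substance.
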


\begin{proof}
Let the pair $(w,f)$ corresponds to a solution $(\psi, \eta)$ of problem ${\rm
P_r^M}$ with $r \in (r_c, r']$. Here $r' = r' (\epsilon_*, M, \omega_1) \in
(r_c, r_0)$ that exists for every $\epsilon_* > 0$ (it will be chosen later
depending on the values of $\epsilon$, $M$ and $\omega_1$) by assertion (a) of
Theorem \ref{propSmall}. Moreover, assertion (b) of Theorem \ref{propSmall}
guarantees that there exist $r'' = r''(M, \omega_1)$ and $\delta_* (M, \omega_1)
> 0$ such that both assertions of that theorem are true for any solution $(\psi,
\eta)$ of problem $\rm P_r^M$ with $r \in (r_c, \min \{r', r''\}]$. Without loss
of generality we suppose that condition \eqref{BdR} is also fulfilled for $r'$
and $r''$.

According to Proposition \ref{RegProp}, we have that $\psi \in C^{2,\alpha} (\bar
D)$ for some $\alpha \in (0, 1)$. For both functions $\psi$ and $\eta$ (the latter
bounds $D$ from above) their $C^{2,\alpha}$-norms are bounded by the same constant
depending on $M$ and $\omega_1$, but independent of $\epsilon_*$; that is,
\begin{equation} \label{C2a}
\| \psi \|_{C^{2,\alpha} (\bar D)} , \ \| \eta \|_{C^{2,\alpha} (\Bbb R)} \leq C (M,
\omega_1) .
\end{equation}

We recall that $w = h - H$ and $f = h_q / h_p$, where $H = H(p; \lambda_+(r))$ is a
stream solution used in the proof of Theorem \ref{propSmall} and $h$ corresponds to
$(\psi,\eta)$ through the partial hodograph transform. Since $w (q, 1) = \eta (q) -
d_+ (r)$, assertion (a) of Theorem \ref{propSmall} gives that $|w (q,1)| <
\epsilon_*$ for all $q \in \Bbb R$. Moreover, we have that $w (q,0) = 0$, and so the
maximum principle yields that $\| w \|_{L^{\infty} (\overline{S})} < \epsilon_*$.
Combining this and \eqref{C2a}, we conclude by virtue of interpolation argument
that
\[ \| w \|_{C^{2} (\overline{S})}, \ \  \| f \|_{C^1 (\overline{S})} < 
\epsilon_*^\gamma C (M, \omega_1) , \ \ i = 1,2 ,
\]
where $\gamma = \frac{\alpha}{2 (1+\alpha)}$. Now, choosing $\epsilon_*$ so that
$\epsilon_*^\gamma C (M, \omega_1) = \epsilon$, we complete the proof.
\end{proof}

\subsubsection{Spectral Splitting} 
\label{secSP}

The spectral problem related to the system \eqref{eq:weak1}, \eqref{eq:weak2} is as
follows:
\begin{numcases}{}
f H_p = \sigma w , \label{eq:spec1}\\ {\cal L} (w) = \sigma f ,  \label{eq:spec2}
\end{numcases}
where $p \in (0, 1)$. It is clear that $(f,w) \in L^2(0,1) \times H^1_0(0,1)$ is an
eigensolution corresponding to $\sigma$, if and only if $w = \phi$ and $f = \sigma
\phi H_p^{-1}$, where $\phi$ is an eigenfunction of the problem
\eqref{SL1}--\eqref{SL3} corresponding to the eigenvalue $\mu = \sigma^2$. Hence the
spectrum of \eqref{eq:spec1}--\eqref{eq:spec2} is the sequence $\{ \sigma_i \}_{i =
0}^{\infty}$ with $\sigma_i = \sqrt{\mu_i}$, where $\sigma_i$ is real and positive
for $i \geq 1$. The corresponding eigensolutions are $(w_i, f_i) = (\phi_i, \sigma_i
\phi_i H_p^{-1})$. Since $\mu_0$ is negative, there are two complex eigenvalues
$\sigma_0^{\pm} = \pm i \sqrt{|\mu_0|}$ and the corresponding two-dimensional
eigenspace is spanned by $(\phi_0, 0)$ and $(0, \phi_0 H_p^{-1})$.

Given a fixed $q \in \Bbb R$ and the real-valued functions $w (q,p)$ and $f (q,p)$,
then there are the following spectral decompositions:
\[ w (q,p) = \sum_{i=0}^{\infty} w_i (q) \phi_i (p) , \quad f(q,p) = 
\sum_{i=0}^{\infty} f_i (q) \phi_i (p) H_p^{-1} .
\]
In view of these formulae  we define two projectors ${\cal P} (w, f) = ({\cal P}_1
w, {\cal P}_2 f)$ and ${\cal Q} (w, f) = (w,f) - {\cal P} (w, f)$, where
\begin{equation}
{\cal P}_1 (w) = \phi_0 \int_0^1 w \phi_0 H_p^{-1} \D p \ \ \mbox{and} \ \ {\cal
P}_2 (f) = \phi_0 H_p^{-1} \int_0^1 f \phi_0 \, \D p . \label{calP}
\end{equation}
This leads to the following spectral splitting:
\[ (w , f) = {\cal P} (w, f) + {\cal Q} (w, f) .
\]
Here the first term is equal to $(w_0 \phi_0, f_0 \phi_0 H_p^{-1})$, whereas the
second one we denote by $(\widetilde{w}, \widetilde{\zeta})$. Applying the
projectors ${\cal Q}$ and ${\cal P}$ to the equations \eqref{eq:weak1}  and
\eqref{eq:weak2}, we obtain
\begin{numcases}{}
\widetilde{w}_q = \widetilde{\zeta} H_p + (I - {\cal P}_1) {\cal N}_1 (w,f),
\label{eq:wtilde} \\ \widetilde{\zeta}_q = {\cal L} (\widetilde{w}) + 
(I - {\cal P}_2) {\cal N}_2 (w,f)  \label{eq:ftilde}
\end{numcases}
for $(q, p) \in S$, and
\begin{numcases}{}
(w_0)_q = f_0 + \int_0^1 {\cal N}_1 (w,f) \phi_0 H_p^{-1} \D p , \label{eq:w0}\\
(f_0)_q = \mu_0 w_0 + \int_0^1 {\cal N}_2 (w,f) \phi_0 \, \D p  \label{eq:f0}
\end{numcases}
for $q \in \Bbb R$. Let us prove that ${\cal L}$ is a positive operator.

\begin{lemma} \label{LemmaSL} 
Let $r$ satisfy condition \eqref{BdR}. For all $w \in H^{1}_0 (0,1)$ orthogonal to
the function $\phi_0 H_p^{-1}$ in $L^{2} (0,1)$ the inequality
\[ \int_0^1 {\cal L} w \cdot w \, \D p \geq C \| w \|^2_{H^1 (0,1)}
\]
holds with a positive constant $C$ depending only on $\omega_0$.
\end{lemma}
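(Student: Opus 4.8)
The plan is to deduce the coercivity of ${\cal L}$ on the relevant subspace from the spectral gap $\mu_1 \geq C_2(\omega_0)$ furnished by Proposition~\ref{PropK1}, combined with an elementary trace estimate absorbing the sign-indefinite boundary term. The starting point is that, taking $\Phi = w$ in the integral identity defining ${\cal L}$, one has
\[ \int_0^1 {\cal L} w \cdot w \, \D p = \int_0^1 \frac{w_p^2}{H_p^3} \, \D p - w^2(1) =: Q(w) , \]
so the assertion amounts to $Q(w) \geq C \|w\|_{H^1(0,1)}^2$ for $w \in H^1_0(0,1)$ (that is, $w(0)=0$) with $\int_0^1 w \phi_0 H_p^{-1}\,\D p = 0$. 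Since $Q$ is precisely the quadratic form of the Sturm--Liouville problem \eqref{SL1}--\eqref{SL3} in the weighted space $L^2(H_p^{-1}\D p)$ and $\phi_0$ is its ground state, the min--max principle gives, for such $w$,
\[ Q(w) \geq \mu_1 \int_0^1 \frac{w^2}{H_p} \, \D p . \]
Condition \eqref{BdR} forces $\lambda_+(r) \in [(\lambda_0+\lambda_c)/2, \lambda_c)$ because ${\cal R}$ is monotone on $(\lambda_0,\lambda_c)$; hence Proposition~\ref{PropK1} applies to $H = H(\cdot,\lambda_+(r))$ and yields $\mu_1 \geq C_2$, while the bounds \eqref{KK7b} in its proof give $\frak m \geq \frak m_*$ and $\frak M \leq \frak M_*$ with $\frak m_*,\frak M_*$ depending only on $\omega_0$. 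In particular $Q(w) \geq C_2 \frak M_*^{-1} \int_0^1 w^2\,\D p$.

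This controls $\|w\|_{L^2}$ but not $\|w_p\|_{L^2}$, and here lies the only real difficulty: the term $-w^2(1)$ is indefinite and, since $H_p$ may exceed $1$, it is not dominated by $\int_0^1 w_p^2 H_p^{-3}\,\D p$. To circumvent this I would invoke the elementary trace inequality
\[ w^2(1) \leq 2\delta \int_0^1 w_p^2 \, \D p + \frac{2}{\delta} \int_0^1 w^2 \, \D p , \qquad \delta \in (0,1) , \]
valid whenever $w(0)=0$; taking $\delta = \tfrac{1}{4} \frak M_*^{-3}$ and using $\int_0^1 w_p^2 H_p^{-3}\,\D p \geq \frak M_*^{-3} \int_0^1 w_p^2 \, \D p$ converts $Q(w) = \int_0^1 w_p^2 H_p^{-3}\,\D p - w^2(1)$ into
\[ Q(w) \geq \tfrac{1}{2} \frak M_*^{-3} \int_0^1 w_p^2 \, \D p - 8 \frak M_*^3 \int_0^1 w^2 \, \D p . \]

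It then remains to combine the two lower bounds for $Q(w)$: a convex combination of weight $\theta$ of the last inequality with weight $1-\theta$ of $Q(w) \geq C_2 \frak M_*^{-1}\int_0^1 w^2\,\D p$, with $\theta$ chosen small (depending only on $C_2$ and $\frak M_*$, hence only on $\omega_0$) so that the coefficient of $\int_0^1 w^2\,\D p$ remains positive, yields $Q(w) \geq C(\omega_0)\big(\int_0^1 w_p^2\,\D p + \int_0^1 w^2\,\D p\big)$, which is the claim. Everything except the handling of the boundary term — the identity for $\int_0^1 {\cal L}w\cdot w\,\D p$, the min--max characterisation of $\mu_1$, the trace inequality, and the bookkeeping reducing all constants to $\omega_0$ via Proposition~\ref{PropK1} — is routine.
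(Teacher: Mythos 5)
Your proposal is correct and follows essentially the same route as the paper's proof: the spectral-gap bound $\int_0^1 {\cal L}w\cdot w\,\D p \geq \mu_1\int_0^1 w^2 H_p^{-1}\,\D p$ on the orthogonal complement of $\phi_0$, with $\mu_1$ and the bounds on $H_p$ controlled through Proposition \ref{PropK1} so that all constants depend only on $\omega_0$, followed by absorbing the boundary term $w^2(1)$ into half of $\int_0^1 w_p^2 H_p^{-3}\,\D p$ via a Cauchy--Schwarz/Young estimate and combining the two lower bounds. The only difference is cosmetic: the paper uses the weighted Cauchy--Schwarz with weights $H_p^{3}$ and $H_p^{-3}$, whereas you use the unweighted trace inequality together with the uniform bound on $H_p$ and an explicit convex combination.
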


\begin{proof} 
Using the spectral representation of $w$, one immediately obtains
\begin{equation} \label{l:3.1}
\int_0^1 {\cal L} w \cdot w \, \D p \geq \mu_1 \int_0^1 |w|^2 H_p^{-1} \D p ,
\end{equation}
where $\mu_1 > 0$ satisfies assertion (ii) of Proposition \ref{PropK1}. On the other
hand, the definition of $\cal L$ yields that
\[ \int_0^1 {\cal L} w \cdot w \, \D p  = \int_0^1 \frac {w_p^2}{H_p^3} \D p - w^2 (1) .
\]
Using the Schwarz inequality, we estimate $w^2 (1) = \int_0^1 w w_p \, \D p / 2$ by
\[ \frac 1 2 \left[ \int_0^1 \!\! w^2 H_p^3 \D p \right]^{1/2} \left[ 
\int_0^1 \!\! w_p^2 H_p^{-3} \D p \right]^{1/2} \leq \frac{\frak M^4}{2}
\int_0^1 \!\! w^2 H_p^{-1} \D p + \frac{1}{2} \int_0^1 \!\! w_p^2 H_p^{-3} \D p ,
\]
where $\frak M = \max_{[0,1]} H_p$. Then we obtain that
\[ \int_0^1 {\cal L} w \cdot w \, \D p  \geq \frac 1 2 \int_0^1 \frac {w_p^2}{H_p^3} 
\D p - \frac{\frak M^4}{2} \int_0^1 \frac {w^2}{H_p} \, \D p .
\]
Combining this inequality and \eqref{l:3.1}, we arrive at the required inequality.
\end{proof}

\subsubsection{Estimates for the Linearized Problem}

Let us consider the linearized system \eqref{eq:wtilde}--\eqref{eq:f0}. The linear
part of \eqref{eq:wtilde} and \eqref{eq:ftilde} is as follows:
\begin{numcases}{}
\widetilde{\xi}_q - \widetilde{\zeta} H_p = g^{(1)} , \label{eq:wtilde1} \\
\widetilde{\zeta}_q - {\cal L} (\widetilde{\xi}) = g^{(2)} , \label{eq:ftilde1}
\end{numcases}
where $(q, p) \in S$. In our considerations, it is sufficient to regard $g^{(1)}$
and $g^{(2)}$ as functions belonging to $C (\Bbb R; L^2 (0,1))$, whereas
$\widetilde{\xi}$ and $\widetilde{\zeta}$ are from $C^1 (\Bbb R; H^1_0 (0,1))$ and
$C^1 (\Bbb R; L^2 (0,1))$, respectively. Moreover, for every $q \in \Bbb R$ the
functions $\widetilde{\xi} (q, \cdot)$ and $\widetilde{\zeta} (q, \cdot)$ are
orthogonal in $L^2 (0,1)$ to $\phi_0 H_p^{-1}$ and $\phi_0$, respectively.
Furthermore, the linearized equations \eqref{eq:w0} and \eqref{eq:f0} are as
follows:
\begin{numcases}{}
\xi_0' - \zeta_0 = g^{(1)}_0 , \label{eq:w01} \\ \zeta_0' + |\mu_0| \xi_0 =
g^{(2)}_0 . \label{eq:f01}
\end{numcases}
Here $q \in \Bbb R$, $\xi_0, \zeta_0 \in C^1 (\Bbb R)$, $g^{(1)}_0, g^{(2)}_0 \in C
(\Bbb R)$ and $'$ denotes $\D / \D q$.

Our aim is to estimate solutions of these systems using the following norm:
\begin{equation}
\| w \|_{\theta, q_0}^2 = \int_S |w|^2 \E^{-\theta |q - q_0|} \, \D q \D p , \quad
\mbox{where} \ q_0 \in \Bbb R \ \mbox{and} \ \theta > 0 .
\label{norm}
\end{equation}

\begin{lemma} \label{StuctEst} 
Let $(\widetilde{\xi} , \widetilde{\zeta})$ be a solution of the system
\eqref{eq:wtilde1}, \eqref{eq:ftilde1}. If $\widetilde{\xi}$ and $\widetilde{\xi}_q$
belong to $L^\infty (\Bbb R; H^1_0 (0,1))$ and $\widetilde{\zeta}$ and
$\widetilde{\zeta}_q$ belong to $L^\infty (\Bbb R; L^2 (0,1))$, then there exists
$\theta_0 \in (0, 1/4)$ depending on $\omega_0$ such that for any $\theta \in (0,
\theta_0]$ and $q_0 \in \Bbb R$ the following inequality holds:
\begin{equation}
\| \widetilde{\zeta} \|_{\theta, q_0}^2 + \| \widetilde{\xi} \|_{\theta, q_0}^2 +
\| \widetilde{\xi}_p \|_{\theta, q_0}^2 \leq C \Big| \int_S \left[ g^{(1)}
\widetilde{\zeta} + g^{(2)} \widetilde{\xi} \right] \E^{-\theta |q - q_0|} \, \D q
\D p \Big| , \label{eq:Struct1}
\end{equation} 
Here the positive constant $C$ depends only on $\omega_0$.
\end{lemma}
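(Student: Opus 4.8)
The strategy is a weighted energy estimate for the linear system \eqref{eq:wtilde1}--\eqref{eq:ftilde1}. The plan is to multiply \eqref{eq:wtilde1} by $\widetilde\zeta\,\E^{-\theta|q-q_0|}$, multiply \eqref{eq:ftilde1} by $\widetilde\xi\,\E^{-\theta|q-q_0|}$, subtract, and integrate over $S$. Because $\widetilde\xi_q\widetilde\zeta-\widetilde\zeta_q\widetilde\xi=\partial_q(\widetilde\xi\widetilde\zeta)-2\widetilde\zeta_q\widetilde\xi$ is not a clean total derivative, a better bookkeeping is: from \eqref{eq:wtilde1} multiply by $\widetilde\zeta\,\E^{-\theta|q-q_0|}$ to get $\widetilde\xi_q\widetilde\zeta=\widetilde\zeta^2H_p+g^{(1)}\widetilde\zeta$; from \eqref{eq:ftilde1} multiply by $\widetilde\xi\,\E^{-\theta|q-q_0|}$ and integrate over $p$ using the weak form of $\mathcal L$ to get $\widetilde\zeta_q\widetilde\xi=\int_0^1(\widetilde\xi_p^2H_p^{-3})\,\D p-\widetilde\xi^2(1)+g^{(2)}\widetilde\xi$. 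Integrating the $q$-derivative terms against the weight produces, after integration by parts in $q$, the boundary-free contribution $\theta\int_S(\operatorname{sgn}(q-q_0))\,\widetilde\xi\widetilde\zeta\,\E^{-\theta|q-q_0|}$ on the left, which is $O(\theta)$ and will be absorbed.

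The core of the argument is the coercivity of $\mathcal L$ on the orthogonal complement of $\phi_0H_p^{-1}$, namely Lemma \ref{LemmaSL}: since $\widetilde\xi(q,\cdot)$ is orthogonal to $\phi_0H_p^{-1}$ for every $q$, we have $\int_0^1\mathcal L(\widetilde\xi)\widetilde\xi\,\D p\ge C\|\widetilde\xi(q,\cdot)\|_{H^1(0,1)}^2$, which controls both $\|\widetilde\xi\|_{\theta,q_0}^2$ and $\|\widetilde\xi_p\|_{\theta,q_0}^2$. The term $\int_S\widetilde\zeta^2H_p\,\E^{-\theta|q-q_0|}\,\D q\,\D p$ coming from \eqref{eq:wtilde1} is comparable to $\|\widetilde\zeta\|_{\theta,q_0}^2$ since $H_p$ is bounded above and below by constants depending only on $\omega_0$ (Proposition \ref{PropK1} and its proof). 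Thus combining the two multiplied equations yields, schematically,
\[
c\,\|\widetilde\zeta\|_{\theta,q_0}^2+c\,\|\widetilde\xi\|_{\theta,q_0}^2+c\,\|\widetilde\xi_p\|_{\theta,q_0}^2
\le C\theta\,\|\widetilde\xi\|_{\theta,q_0}\|\widetilde\zeta\|_{\theta,q_0}
+\Big|\int_S\big[g^{(1)}\widetilde\zeta+g^{(2)}\widetilde\xi\big]\E^{-\theta|q-q_0|}\D q\,\D p\Big|,
\]
and for $\theta\le\theta_0$ with $\theta_0=\theta_0(\omega_0)$ small the first term on the right is absorbed into the left, giving \eqref{eq:Struct1}. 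The restriction $\theta_0<1/4$ is needed only to ensure the exponential weight is integrable against the a priori $L^\infty(\Bbb R;\cdot)$ bounds, so that all integrations by parts in $q$ are legitimate and no boundary terms at $q=\pm\infty$ survive.

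The main obstacle is not coercivity itself but the careful handling of the $q$-integration by parts with the non-smooth weight $\E^{-\theta|q-q_0|}$: its derivative is $-\theta\operatorname{sgn}(q-q_0)\E^{-\theta|q-q_0|}$, so one must track the sign and check that the resulting cross term is genuinely $O(\theta)$ times the product of the norms rather than something that could have the wrong sign or fail to be absorbed. A secondary technical point is justifying that $\widetilde\xi\widetilde\zeta\,\E^{-\theta|q-q_0|}\to 0$ as $q\to\pm\infty$ (or more precisely that no boundary contribution arises), which follows from the hypotheses $\widetilde\xi,\widetilde\xi_q\in L^\infty(\Bbb R;H^1_0)$ and $\widetilde\zeta,\widetilde\zeta_q\in L^\infty(\Bbb R;L^2)$ together with $\theta>0$. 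Once these are in place the estimate is a routine absorption argument with all constants traced to depend only on $\omega_0$ via Proposition \ref{PropK1} and Lemma \ref{LemmaSL}.
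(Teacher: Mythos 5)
Your argument is correct and is essentially the paper's own proof: multiply \eqref{eq:wtilde1} by $\widetilde{\zeta}$ and \eqref{eq:ftilde1} by $\widetilde{\xi}$, sum so that the $q$-derivative terms combine into the exact derivative $\partial_q(\widetilde{\xi}\,\widetilde{\zeta})$, invoke Lemma \ref{LemmaSL} (coercivity of ${\cal L}$ on the complement of $\phi_0 H_p^{-1}$, which also takes care of the boundary term $-\widetilde{\xi}^2(1)$) together with $H_p \geq \frak m$, and then absorb the $O(\theta)$ term produced by integrating the weight $\E^{-\theta|q-q_0|}$ by parts, with all constants traced to $\omega_0$ via Proposition \ref{PropK1}. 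One cosmetic remark: the requirement $\theta_0 < 1/4$ is not needed for integrability of the weight (any $\theta>0$ suffices for that); it is imposed so that the same $\theta_0$ is admissible in Lemma \ref{fmu} later on, and one simply caps the coercivity-based choice of $\theta_0$ below $1/4$.
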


Unlike usual estimates, in which solution's norm is estimated by a norm of the term
on the right-hand side, the inequality \eqref{eq:Struct1} has an expression
involving $g_1$ and $g_2$ on the right-hand side. The reason for this is the fact
that $g_1$ and $g_2$ are some integral functionals (see \eqref{eq:wtilde} and
\eqref{eq:ftilde}).

\begin{proof}
Let us multiply \eqref{eq:wtilde1} and \eqref{eq:ftilde1} by $\widetilde{\zeta}$ and
$\widetilde{\xi}$, respectively, and integrate the results over $(0,1)$. Summing up,
we obtain that
\[ \int_0^1 [\tilde{\xi} \tilde{\zeta}]_q \, \D p = \int_0^1 \left[ 
\widetilde{\zeta}^2 H_p + ({\cal L} \widetilde{\xi})  \widetilde{\xi} \right] \D p
+ \int_0^1 \left[ g^{(1)} \widetilde{\zeta} + g^{(2)} \widetilde{\xi} \right] \D p =
I_1 + I_2 .
\]
Applying Lemma \ref{LemmaSL}, we estimate $I_1$ from below as follows:
\[ I_1 \geq  \frak m \| \widetilde{\zeta} (q,\cdot) \|_{L^2(0,1)}^2 + C (\omega_0) 
\| \widetilde{\xi} (q,\cdot) \|_{H^1(0,1)}^2 ,
\]
where $C$ is the constant from Lemma \ref{LemmaSL}. Let $\theta_0 = \min \{ C
(\omega_0), \frak m\} / 2$, multiplying the last inequality by $\E^{-\theta |q -
q_0|}$ with $\theta \in (0, \theta_0)$ and $q_0 \in \Bbb R$, we obtain after
integration over $\Bbb R$
\[ \| \widetilde{\zeta} \|_{\theta, q_0}^2 + \| \widetilde{\xi} \|_{\theta, q_0}^2 + 
\| \widetilde{\xi}_p \|_{\theta, q_0}^2 \leq  \frac{1}{2 \theta_0} \left[ |I_2| +
\Big| \int_S \left( \tilde{\xi} \tilde{\zeta} \right)_q \E^{-\theta |q - q_0|} \,
\D q \D p \Big| \right] .
\]
Integrating by parts in the last integral, we see that its absolute value is less
that or equal to $\theta_0 \left( \| \widetilde{\zeta} \|_{\theta, q_0}^2 + \|
\widetilde{\xi} \|_{\theta, q_0}^2 \right)$, which leads to the following
inequality:
\[ \| \widetilde{\zeta} \|_{\theta, q_0}^2 + \| \widetilde{\xi} \|_{\theta, q_0}^2 + 
\| \widetilde{\xi}_p \|_{\theta, q_0}^2 \leq \frac{1}{\theta_0} \Big| \int_S \left[ g^{(1)}
\widetilde{\zeta} + g^{(2)} \widetilde{\xi} \right] \E^{-\theta |q - q_0|} \, \D q \D p \Big| .
\]
The proof is complete.
\end{proof}

\begin{lemma}\label{fmu} 
Let $(\xi_0, \zeta_0)$ be a bounded solution of the system \eqref{eq:w01},
\eqref{eq:f01} such that $\xi_0'$ and $\zeta_0'$ are also bounded. Then the
inequality
\begin{equation} 
\| \zeta_0 \|_{\theta, q_0}^2 \leq C_1(\omega_0) \Big[ \| \xi_0 \|_{\theta, q_0}^2 +
\| g^{(1)}_0 \|_{\theta, q_0}^2 + \| g^{(2)}_0 \|_{\theta, q_0}^2 \Big]
\label{eq:Struct2}
\end{equation}
holds for all $\theta \in (0,1/4]$, $q_0 \in \Bbb R$; the positive constant $C_1$
depends only on $\omega_0$.

Moreover, if $\xi_0 (q_0) = \zeta_0 (q_0) = 0$ for some $q_0 \in \Bbb R$, then 
\begin{equation} \label{eq:Struct3}
\| \xi_0 \|_{\theta, q_0}^2 + \| \zeta_0 \|_{\theta, q_0}^2 \leq C_2 (\omega_0, \theta) 
\| g_0^{(1)}  \|_{\theta, q_0}^2 + C_3 (\theta) \| g_0^{(2)} \|_{\theta, q_0}^2
\end{equation}
holds with an arbitrary $\theta > 0$; the positive constant $C_2$ depends on $\theta$ and $\omega_0$, while $C_3$ depends
only on $\theta$.
\end{lemma}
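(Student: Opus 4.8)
The plan is to regard \eqref{eq:w01}, \eqref{eq:f01} as a constant-coefficient first-order $2\times2$ system whose homogeneous part has the purely imaginary eigenvalues $\pm i\sqrt{|\mu_0|}$. Recall that $\mu_0<0$ by assertion~(i) of Proposition~\ref{PropK1} (throughout this section $r$ obeys \eqref{BdR}, so $\lambda=\lambda_+(r)\in[(\lambda_0+\lambda_c)/2,\lambda_c)$ and the proposition applies), and that $|\mu_0|\le C(\omega_0)$ because $|\lambda-\lambda_c|$ is bounded. Since the system is oscillatory rather than hyperbolic there is no exponential dichotomy, so the estimates must rely entirely on the weight in \eqref{norm} and, for \eqref{eq:Struct3}, on the vanishing of $(\xi_0,\zeta_0)$ at $q_0$. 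The elementary ingredient I would isolate first is a weighted Poincar\'e inequality: if $v$ is a bounded $C^1$ function on $\Bbb R$ (real- or complex-valued) with $v(q_0)=0$, then $\|v\|_{\theta,q_0}^2\le(4/\theta^2)\,\|v_q\|_{\theta,q_0}^2$ for every $\theta>0$. This follows by integrating $\tfrac12(|v|^2)_q$ against $\E^{-\theta|q-q_0|}$ over each half-line $q\gtrless q_0$ and integrating by parts; the boundary terms at $\pm\infty$ vanish because $v$ is bounded and the weight is integrable, and one then uses the Schwarz inequality and divides by the finite quantity $\|v\|_{\theta,q_0}$.

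To prove \eqref{eq:Struct2} I would use an energy identity. Multiplying \eqref{eq:w01} by $\zeta_0\,\E^{-\theta|q-q_0|}$, integrating over $\Bbb R$, integrating by parts in $\int\xi_0'\zeta_0\,\E^{-\theta|q-q_0|}\,\D q$, and substituting $\zeta_0'=g_0^{(2)}-|\mu_0|\xi_0$ from \eqref{eq:f01} produces an identity for $\|\zeta_0\|_{\theta,q_0}^2$ whose right-hand side consists of $|\mu_0|\,\|\xi_0\|_{\theta,q_0}^2$, the cross terms with $g_0^{(1)}$ and $g_0^{(2)}$, and a term bounded by $\theta\int|\xi_0|\,|\zeta_0|\,\E^{-\theta|q-q_0|}\,\D q$ arising from the derivative of the weight, since $|(\E^{-\theta|q-q_0|})_q|=\theta\,\E^{-\theta|q-q_0|}$ a.e.; the boundary contributions at $\pm\infty$ vanish because the solution is bounded. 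Estimating the cross terms by Young's inequality and using $\theta\le 1/4$ and $|\mu_0|\le C(\omega_0)$, the total coefficient of $\|\zeta_0\|_{\theta,q_0}^2$ on the right is at most $\theta/2+1/4<1$, so that term can be absorbed and one obtains \eqref{eq:Struct2} with $C_1$ depending only on $\omega_0$ (the dependence on $\theta\in(0,1/4]$ being harmless).

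For \eqref{eq:Struct3} the key is to combine the two equations without producing a factor $|\mu_0|^{-1/2}$, which would blow up as $r\to r_c$. I would set $\nu=\sqrt{|\mu_0|}>0$ and $z=\nu\xi_0+i\zeta_0$; then \eqref{eq:w01}, \eqref{eq:f01} become $z_q+i\nu z=\nu g_0^{(1)}+i g_0^{(2)}$ with $z(q_0)=0$. Writing $z=\E^{-i\nu q}W$, the function $W(q)=\int_{q_0}^q\E^{i\nu t}(\nu g_0^{(1)}(t)+i g_0^{(2)}(t))\,\D t$ satisfies $W(q_0)=0$, is bounded (as $|W|=|z|$) and has $|W_q|=|\nu g_0^{(1)}+i g_0^{(2)}|$, so the weighted Poincar\'e inequality gives $\|z\|_{\theta,q_0}^2=\|W\|_{\theta,q_0}^2\le(4/\theta^2)(\nu^2\|g_0^{(1)}\|_{\theta,q_0}^2+\|g_0^{(2)}\|_{\theta,q_0}^2)$. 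Since $|z|^2=\nu^2\xi_0^2+\zeta_0^2$ and $\nu^2=|\mu_0|\le C(\omega_0)$, this already yields $\|\zeta_0\|_{\theta,q_0}^2\le(4/\theta^2)(C(\omega_0)\|g_0^{(1)}\|_{\theta,q_0}^2+\|g_0^{(2)}\|_{\theta,q_0}^2)$, with the coefficient of $\|g_0^{(2)}\|_{\theta,q_0}^2$ depending only on $\theta$. For $\xi_0$ I would not invert anything but use $\xi_0(q_0)=0$ and $\xi_0'=\zeta_0+g_0^{(1)}$ from \eqref{eq:w01}; a second application of the weighted Poincar\'e inequality then gives $\|\xi_0\|_{\theta,q_0}^2\le(4/\theta^2)\|\zeta_0+g_0^{(1)}\|_{\theta,q_0}^2\le(8/\theta^2)(\|\zeta_0\|_{\theta,q_0}^2+\|g_0^{(1)}\|_{\theta,q_0}^2)$, and inserting the bound for $\|\zeta_0\|_{\theta,q_0}^2$ produces \eqref{eq:Struct3} with $C_2$ depending on $\omega_0$ and $\theta$ and $C_3$ depending only on $\theta$.

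The integrations by parts and the Young/Schwarz estimates are routine; the one point that must be got right\,---\,the real obstacle in disguise\,---\,is the choice of the complex combination $z=\nu\xi_0+i\zeta_0$ (rather than $\xi_0+i\nu^{-1}\zeta_0$) together with the decision to recover $\xi_0$ as a weighted antiderivative of $\zeta_0+g_0^{(1)}$: both are forced by the requirement that $g_0^{(2)}$ enter \eqref{eq:Struct3} with an $\omega_0$-independent constant, i.e. with no inverse power of $\mu_0$, even though $\mu_0\to0$ in the nearcritical limit. One should also verify that boundedness of $(\xi_0,\zeta_0)$ together with $\xi_0',\zeta_0'$ is precisely what makes all the boundary terms at $\pm\infty$ vanish and the weighted norms finite a priori, so that the divisions performed in the energy identity and in the Poincar\'e inequality are legitimate.
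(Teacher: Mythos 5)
Your proposal is correct and, despite the complex-variable packaging, follows essentially the paper's own route: for \eqref{eq:Struct2} your integrated identity is exactly the paper's relation $(\xi_0\zeta_0)'-\zeta_0^2+|\mu_0|\xi_0^2=g_0^{(1)}\zeta_0+g_0^{(2)}\xi_0$, with the weight-derivative term absorbed for $\theta\le 1/4$ and $|\mu_0|\le C(\omega_0)$ used in the same implicit way. For \eqref{eq:Struct3}, your $z=\nu\xi_0+i\zeta_0$ together with the weighted Poincar\'e inequality is a repackaging of the paper's half-line integration by parts of $\tfrac12\,(\zeta_0^2+|\mu_0|\xi_0^2)'=|\mu_0|\xi_0 g_0^{(1)}+\zeta_0 g_0^{(2)}$ and of its separate estimate of $\xi_0$ via \eqref{eq:w01}, and it yields the same constant structure ($|\mu_0|$ entering only multiplicatively with $g_0^{(1)}$, a $\theta$-only constant on $g_0^{(2)}$).
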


\begin{proof} 
Let us prove \eqref{eq:Struct2} first. Multiplying the equations \eqref{eq:w01} and
\eqref{eq:f01} by $\zeta_0$ and $\xi_0$, respectively, and summing up, we obtain
\[ (\xi_0 \zeta_0)' - \zeta_0^2 + |\mu_0| \xi_0^2 = g_0^{(1)} \zeta_0 + g_0^{(2)} \xi_0.
\]
This gives that
\[ \frac{3}{4} \zeta_0^2 \leq (\xi_0 \zeta_0)' + 4 \left[ g_0^{(1)}\right]^2 +  
\left[ g_0^{(2)}\right]^2 + (1 + |\mu_0|) \xi_0^2 .
\]
Let us multiply this inequality by $\E^{-\theta |q-q_0|}$ and integrate over $\Bbb
R$. This yields
\[ \frac{3}{4} \| \zeta_0 \|_{\theta, q_0}^2 \leq \int_{\Bbb R} (\xi_0 \zeta_0)' 
\E^{-\theta |q-q_0|} \, \D q + C \left[ \| g_0^{(1)} \|_{\theta, q_0} + \|
g_0^{(2)} \|_{\theta, q_0} + \| \xi \|_{\theta, q_0} \right].
\]
Estimating the integral on the right-hand side
\[ \int_{\Bbb R} (\xi_0 \zeta_0)' \E^{-\theta |q-q_0|} \, \D q \leq \theta \left(
\| \zeta_0 \|_{\theta, q_0}^2 + \| \xi_0 \|_{\theta, q_0}^2 \right) ,
\]
we see that for $\theta \leq \theta_0 = 1/4$ the required inequality follows from
the last two estimates.

Let turn to the second assertion. We multiply \eqref{eq:f01} by $\zeta_0$ and take
into account \eqref{eq:w01}, thus obtaining
\[ \zeta_0 \zeta_0' + |\mu_0| \xi_0 \xi_0' = |\mu_0| \xi_0 g_0^{(1)} + \zeta_0 g_0^{(2)} .
\]
Multiplying this by $\E^{-\theta |q-q_0|}$, we integrate the result over $(q_0,
+\infty)$ which gives
\[ \theta \int_{q_0}^{+\infty} \left[ \zeta_0^2 + |\mu_0| \xi_0^2 \right] \E^{-\theta 
|q-q_0|} \, \D q = \int_{q_0}^{+\infty} \left[ |\mu_0| \xi_0 g_0^{(1)} + \zeta_0
g_0^{(2)} \right] \E^{-\theta |q-q_0|} \, \D q
\]
after integration by parts. Applying the Schwarz inequality to the right-hand side,
we arrive at
\begin{equation} \label{fmu:1}
\frac{\theta}{2} \int_{q_0}^{+\infty} \left[ \zeta_0^2 + |\mu_0| \xi_0^2 \right] \E^{-\theta |q-q_0|} 
\, \D q \leq \frac{2|\mu_0|}{\theta} \| g_0^{(1)} \|_{\theta, q_0}^2 + \frac{2}{\theta} 
\| g_0^{(2)} \|_{\theta, q_0}^2 .
\end{equation}

Now, we multiply \eqref{eq:w01} by $\xi_0 \E^{-\theta |q-q_0|}$ and integrate the result 
over $(q_0, +\infty)$. This gives after integration by parts the following inequality:
\[ \theta \int_{q_0}^{+\infty} \xi_0^2 \E^{-\theta |q-q_0|} \, \D q = \int_{q_0}^{+\infty} 
\left[ \zeta_0 \xi_0 + \xi_0 g_0^{(1)} \right] \E^{-\theta |q-q_0|} \, \D q .
\]
Applying the Schwarz inequality to the right-hand side, we obtain
\[ \frac{\theta}{2} \int_{q_0}^{+\infty} \xi_0^2 \E^{-\theta |q-q_0|} \, \D q \leq 
\frac{4}{\theta} \left[ 1 + \frac{2|\mu_0|}{\theta} \right] \| g_0^{(1)} \|_{\theta, q_0}^2
+ \frac{8}{\theta^2} \| g_0^{(2)} \|_{\theta, q_0}^2 ,
\]
where \eqref{fmu:1} is also taken into account. Combining the last inequality and
\eqref{fmu:1}, we arrive at
\[
\int_{q_0}^{+\infty} \left[ \xi_0^2 + \zeta_0^2 \right] \E^{-\theta |q-q_0|} \, \D q \leq 
C_1 (\omega_0, \theta) \| g_0^{(1)} \|_{\theta, q_0}^2 + C_2 (\theta) 
\| g_0^{(2)} \|_{\theta, q_0}^2 .
\]

In the same way we estimate $\int_{-\infty}^{q_0} \left[ \xi_0^2 + \zeta_0^2 \right] 
\E^{-\theta |q-q_0|} \, \D q$, which completes the proof.
\end{proof}

\subsection{Proof of Theorem \ref{StabThm2'}}

Let $(\psi^{(1)}, \eta^{(1)})$ and $(\psi^{(2)}, \eta^{(2)})$ be two arbitrary
solutions of problem ${\rm P_r^M}$; here $M$ is a positive number and the Bernoulli
constant $r$ satisfies condition \eqref{BdR}.

For the functions $h^{(1)}$ and $h^{(2)}$ that correspond to $(\psi^{(1)},
\eta^{(1)})$ and $(\psi^{(2)}, \eta^{(2)})$, respectively, through the partial
hodograph transform, we put
\[ w^{(i)} = h^{(i)} - H \ \ \ \text{and} \ \ \ f^{(i)} = h^{(i)}_q / h^{(i)}_p , 
\quad i = 1, 2,
\]
where $H = H(p; \lambda_+(r))$. Let
\[ \xi = w^{(1)} - w^{(2)} \quad \mbox{and} \quad   \zeta = f^{(1)} - f^{(2)} ,
\]
for which we consider the following spectral splitting (see Section \ref{secSP} for
the corresponding notation):
\[ \xi = \xi_0 \phi_0 + \widetilde{\xi} \quad \mbox{and} \quad \zeta = \zeta_0 
\phi_0 H_p^{-1} + \widetilde{\zeta} .
\]
Here the functions $\widetilde{\xi} H_p^{-1}$ and $\widetilde{\zeta}$ are orthogonal 
to $\phi_0$ in $L^2(0,1)$. 

Let us outline our proof of Theorem \ref{StabThm2'}. First, we estimate the
$\theta, q_0$-norm (see \eqref{norm} for its definition) of $\widetilde{\xi}$ and
$\widetilde{\zeta}$ by the same norm of the one-dimensional projections $\xi_0$ and
$\zeta_0$ (see Lemma \ref{StabLemma} below). Next we estimate the norm of $\xi_0$ by
its Cauchy data (see Lemma \ref{CauchyLemma1}). The last step is to estimate the
Cauchy data of $\xi_0$ at some point by the Cauchy data of $\eta^{(1)} - \eta^{(2)}$
at the same point (see Lemma \ref{CauchyLemma2} below).

\subsubsection{Lemmas}

Assuming that both solutions $(w^{(i)}, f^{(i)})$, $i=1,2$ are small, we estimate
the difference of their projections by the norm of the function $\xi_0$ only.

\begin{lemma} \label{StabLemma} 
Let $r$ satisfy \eqref{BdR}. Then there exist $\theta_0, \epsilon_0$ and $C
(\omega_0)$ (all positive) such that the inequality
\[ \| \widetilde{\zeta} \|_{\theta, q_0}^2 + \| \widetilde{\xi} \|_{\theta, q_0}^2 
+ \| \widetilde{\xi}_p \|_{\theta, q_0}^2 \leq  \epsilon C (\omega_0) 
\| \xi_0 \|_{\theta, q_0}^2
\]
holds for all $\theta \in (0, \theta_0]$, $q_0 \in \Bbb R$ and $\epsilon \in
(0, \epsilon_0]$ provided
\[ \epsilon = \max_{i=1,2} \left( \|w^{(i)}\|_{C^2(\overline{S})} 
+ \|f^{(i)}\|_{C^1(\overline{S})} \right) \leq \epsilon_0.
\]
The constants $\epsilon_0$ and $\theta_0$ depend only on $\omega_0$ and the second
of them is the same as in Lemma \ref{StuctEst}, and so allows us to apply Lemma
\ref{fmu}.
\end{lemma}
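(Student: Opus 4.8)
The plan is to write the equations for the difference $(\xi,\zeta)=(w^{(1)}-w^{(2)},\,f^{(1)}-f^{(2)})$, split them with the projectors $\mathcal P$ and $\mathcal Q$, and then combine the estimates of Lemmas~\ref{StuctEst} and~\ref{fmu} with an absorption argument that closes the bound in terms of $\|\xi_0\|_{\theta,q_0}$ alone. First I would subtract the systems \eqref{eq:weak1}, \eqref{eq:weak2} written for $(w^{(1)},f^{(1)})$ and for $(w^{(2)},f^{(2)})$ (both with the same $H=H(p;\lambda_+(r))$), obtaining
\begin{align*}
\xi_q-\zeta H_p&=G^{(1)}:=\mathcal N_1(w^{(1)},f^{(1)})-\mathcal N_1(w^{(2)},f^{(2)}),\\
\zeta_q-\mathcal L(\xi)&=G^{(2)}:=\mathcal N_2(w^{(1)},f^{(1)})-\mathcal N_2(w^{(2)},f^{(2)}).
\end{align*}
From $\mathcal N_1(w,f)=fw_p$ and the weak form of $\mathcal N_2$, and using that $\|w^{(i)}\|_{C^2(\bar S)}+\|f^{(i)}\|_{C^1(\bar S)}\le\epsilon$ while $h^{(i)}_p=H_p+w^{(i)}_p$ stays bounded away from zero by Proposition~\ref{PropBoundsPsi}, I would record the pointwise bound $|G^{(1)}(q,p)|\le C\epsilon(|\xi_p|+|\zeta|)$ and, writing the functional $G^{(2)}$ as $\int_0^1 G^{(2)}\Phi\,\D p=-\int_0^1 b\,\Phi_p\,\D p$ with $|b(q,p)|\le C\epsilon(|\xi_p|+|\zeta|)$ (the difference of two quadratic/cubic expressions is linear in the differences to leading order, hence loses one power of $\epsilon$), the estimates $\big|\int_0^1 G^{(2)}\widetilde\xi\,\D p\big|\le C\epsilon\int_0^1(|\xi_p|+|\zeta|)|\widetilde\xi_p|\,\D p$ and $\big|\int_0^1 G^{(2)}\phi_0\,\D p\big|\le C\epsilon\int_0^1(|\xi_p|+|\zeta|)\,\D p$; all constants here depend only on $\omega_0$ by the uniform-in-$r$ bounds of Propositions~\ref{PropBoundsPsi} and~\ref{PropK1}.

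Applying $\mathcal Q$ to the two equations and using that $\widetilde\zeta$ is orthogonal to $\phi_0$ and $\widetilde\xi$ to $\phi_0 H_p^{-1}$ in $L^2(0,1)$, the pair $(\widetilde\xi,\widetilde\zeta)$ solves \eqref{eq:wtilde1}, \eqref{eq:ftilde1} with $g^{(1)}=(I-\mathcal P_1)G^{(1)}$, $g^{(2)}=(I-\mathcal P_2)G^{(2)}$, and $\int_0^1[g^{(1)}\widetilde\zeta+g^{(2)}\widetilde\xi]\,\D p=\int_0^1[G^{(1)}\widetilde\zeta+G^{(2)}\widetilde\xi]\,\D p$. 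The regularity hypotheses of Lemma~\ref{StuctEst} hold because $w^{(i)}\in C^2(\bar S)$ and $f^{(i)}\in C^1(\bar S)$, so that lemma bounds $\|\widetilde\zeta\|_{\theta,q_0}^2+\|\widetilde\xi\|_{\theta,q_0}^2+\|\widetilde\xi_p\|_{\theta,q_0}^2$ by the displayed integral on its right-hand side; inserting the $G$-estimates, the decompositions $|\xi_p|\le C|\xi_0|+|\widetilde\xi_p|$ and $|\zeta|\le C|\zeta_0|+|\widetilde\zeta|$ (with $\phi_0,\phi_0'$ bounded by assertion~(iii) of Proposition~\ref{PropK1}), and the Cauchy--Schwarz inequality, I obtain
\[
\|\widetilde\zeta\|_{\theta,q_0}^2+\|\widetilde\xi\|_{\theta,q_0}^2+\|\widetilde\xi_p\|_{\theta,q_0}^2\le C\epsilon\left(\|\xi_0\|_{\theta,q_0}^2+\|\zeta_0\|_{\theta,q_0}^2+\|\widetilde\xi_p\|_{\theta,q_0}^2+\|\widetilde\zeta\|_{\theta,q_0}^2\right).
\]
For $\epsilon$ small enough the last two terms on the right are absorbed into the left-hand side, leaving this inequality with only $\|\xi_0\|_{\theta,q_0}^2+\|\zeta_0\|_{\theta,q_0}^2$ on the right.

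It remains to dominate $\|\zeta_0\|_{\theta,q_0}$ by $\|\xi_0\|_{\theta,q_0}$. The scalar projections satisfy \eqref{eq:w01}, \eqref{eq:f01} with $g_0^{(1)}=\int_0^1 G^{(1)}\phi_0 H_p^{-1}\,\D p$ and $g_0^{(2)}=\int_0^1 G^{(2)}\phi_0\,\D p$, for which the bounds above together with the previous step give $\|g_0^{(1)}\|_{\theta,q_0}^2+\|g_0^{(2)}\|_{\theta,q_0}^2\le C\epsilon^2\left(\|\xi_0\|_{\theta,q_0}^2+\|\zeta_0\|_{\theta,q_0}^2\right)$. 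Since $\theta\le\theta_0<1/4$, estimate \eqref{eq:Struct2} of Lemma~\ref{fmu} applies and yields $\|\zeta_0\|_{\theta,q_0}^2\le C_1\|\xi_0\|_{\theta,q_0}^2+C\epsilon^2\left(\|\xi_0\|_{\theta,q_0}^2+\|\zeta_0\|_{\theta,q_0}^2\right)$, whence $\|\zeta_0\|_{\theta,q_0}^2\le C\|\xi_0\|_{\theta,q_0}^2$ for $\epsilon$ small. Feeding this back into the inequality of the previous paragraph completes the proof, with $\theta_0$ the constant of Lemma~\ref{StuctEst} and $\epsilon_0$, $C(\omega_0)$ depending only on $\omega_0$.

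The step I expect to be most delicate is the treatment of the nonlinear remainders $G^{(1)},G^{(2)}$: one must regard $G^{(2)}$ as an element of the dual of $H^1_0(0,1)$ rather than of $L^2(0,1)$, extract a genuine factor $\epsilon$ (which is possible only because the difference of two quadratic/cubic nonlinearities is linear in the differences to leading order), and keep every constant uniform in $r\in(r_c,r'']$ — this uniformity rests on Propositions~\ref{PropBoundsPsi} and~\ref{PropK1}. The second delicate point is ordering the absorption steps so that the final estimate involves $\|\xi_0\|_{\theta,q_0}$ only: one first controls the $\mathcal Q$-part by $\|\xi_0\|$ and $\|\zeta_0\|$, then controls $g_0^{(1)},g_0^{(2)}$ by $\epsilon^2$ times all quantities, and only then uses Lemma~\ref{fmu} to eliminate $\|\zeta_0\|$.
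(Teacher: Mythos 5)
Your proposal is correct and follows essentially the same route as the paper: apply Lemma \ref{StuctEst} to the $\mathcal{Q}$-projected system with the differenced nonlinearities, extract a factor $\epsilon$ from those differences, and then use estimate \eqref{eq:Struct2} of Lemma \ref{fmu} together with an absorption argument to eliminate $\|\zeta_0\|_{\theta,q_0}$ and close the bound in terms of $\|\xi_0\|_{\theta,q_0}$ alone. The only (harmless) deviations are cosmetic: you discard the projector contributions by orthogonality where the paper estimates them via the $L^2$-boundedness of $\mathcal{P}_2$ (its terms $J_1$, $J_2$), and you order the absorption steps slightly differently.
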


\begin{proof} 
Since $\widetilde{\xi}$ and $\widetilde{\zeta}$ solve the system \eqref{eq:wtilde1},
\eqref{eq:ftilde1} with the following right-hand side terms (see \eqref{calP} for
the definition of ${\cal P}_i$):
\[ g^{(i)} = (I - {\cal P}_i) \left[ {\cal N}_i \left( w^{(1)}, f^{(1)} \right) - 
{\cal N}_i \left( w^{(2)}, f^{(2)} \right) \right] , \quad i = 1,2 ,
\]
Lemma \ref{StuctEst} implies that there exists $\theta_0 > 0$ such that the
inequality \eqref{eq:Struct1} holds for $\widetilde{\xi}$ and $\widetilde{\zeta}$
and all $\theta \in (0,\theta_0]$. Hence it remains to estimate the term on the
right-hand side of \eqref{eq:Struct1}; its integrand consists of two terms that have
$g^{(1)}$ and $g^{(2)}$ as factors.

The Schwarz inequality applied to the first term gives
\[ \left| \int_0^1 g^{(1)} \widetilde{\zeta} \, \D p \right| \leq  I \left[ \int_0^1 
\left\{ g^{(1)} \right\}^2 \, \D p \right]^{\frac 1 2} , \quad \mbox{where} \ [I
(q)]^2 = \int_0^1 \left( \widetilde{\xi}^2 + \widetilde{\xi}_p^2 +
\widetilde{\zeta}^2 \right) \D p .
\]
Inequality \eqref{BdR} together with Proposition \ref{PropK1} yield that the
operator ${\cal P}_1$ is bounded in $L^{2}(0,1)$ and its norm does not exceed a
constant depending only on $\omega_0$. Thus, we have
\[ \int_0^1 \left[ g^{(1)} \right]^2 \D p \leq C (\omega_0) \int_0^1 \left[ {\cal N}_1 
\left( w^{(1)}, f^{(1)} \right) - {\cal N}_1 \left( w^{(2)}, f^{(2)} \right) \right] 
\D p ,
\]
whereas the definition of ${\cal N}_1$ gives
\begin{eqnarray*}
\left| {\cal N}_1 \left( w^{(1)}, f^{(1)} \right) - {\cal N}_1 \left( w^{(2)},
f^{(2)} \right) \right| = \left| w^{(1)} f^{(1)} - w^{(2)} f^{(2)} \right| \\ \leq
\epsilon \, C(\omega_0) (I + I_0) , \ \ \ \mbox{where} \ [I_0 (q)]^2 = \xi_0^2 +
\zeta_0^2 .
\end{eqnarray*}
Therefore, 
\[ \left| \int_0^1 g^{(1)} \widetilde{\zeta} \, \D p \right| \leq  \epsilon \, C' 
(\omega_0) (I^2 + I_0^2) .
\]

Let us turn to estimating the second term on the right-hand side of
\eqref{eq:Struct1}:
\[ \int_0^1 g^{(2)} \widetilde{\xi} \, \D p = \int_0^1 \widetilde{\xi} (I - 
{\cal P}_2) \left[ {\cal N}_2 \left( w^{(1)}, f^{(1)} \right) - {\cal N}_2
\left( w^{(2)}, f^{(2)} \right) \right]  \, \D p = J_1 - J_2.
\]
Here $J_1$ stands for the integral whose integrand is th product of the square bracket
and $\widetilde{\xi}$, whereas the operator ${\cal P}_2$ is applied to the same
square bracket in $J_2$. Without loss of generality, we can assume that $\epsilon
\leq 1$. Then the definition of ${\cal N}_2$ gives that
\[ |J_1| \leq \epsilon \, C (\omega_0) (I^2 + I^2_0) .
\]
On the other hand, we have
\[ |J_2| \leq C (\omega_0) \int_0^1 |\widetilde{\xi}| \, \D p \cdot \Bigg| \int_0^1 
\left[ {\cal N}_2 \left( w^{(1)}, f^{(1)} \right) - {\cal N}_2 \left( w^{(2)},
f^{(2)} \right) \right] \phi_0 \, \D p \Bigg| .
\]
Using the definition of ${\cal N}_2$, we conclude that the last absolute value is
less than or equal to $\epsilon \, C (\omega_0) (I + I_0)$. Combining this fact and
the previous estimate, we get that
\[ |J_2| \leq \epsilon \, C (\omega_0) (I^2 + I_0^2) .
\]
Let $\theta \in (0,\theta_0]$, where $\theta_0$ is the same as in Lemma
\ref{StuctEst}. Then inequality \eqref{eq:Struct1} and the estimates obtained above
imply that
\begin{equation} \label{IbyI0}
\begin{split}
\int_{-\infty}^\infty [I(q)]^2 \E^{-\theta |q-q_0|} \, \D q & \leq \epsilon \, C
(\omega_0) \int_{-\infty}^\infty [I (q)]^2 \E^{-\theta |q-q_0|} \, \D q \\ & +
\epsilon \, C (\omega_0) \int_{-\infty}^\infty [I_0 (q)]^2 \E^{-\theta |q-q_0|} \,
\D q .
\end{split}
\end{equation}
In order to estimate the last term on the right-hand side, we apply inequality
\eqref{eq:Struct2}, thus obtaining
\[ \int_{-\infty}^\infty [I_0 (q)]^2 \E^{-\theta |q-q_0|} \leq C_1(\omega_0) \left[ 
\| \xi_0 \|^2_{\theta, q_0} + \| g_0^{(1)} \|^2_{\theta, q_0} + \| g_0^{(2)}
\|^2_{\theta, q_0} \right] ,
\]
where
\[ g_0^{(i)} = \int_0^1 \left[ {\cal N}_i \left( w^{(1)}, f^{(1)} \right) - 
{\cal N}_i \left( w^{(2)}, f^{(2)} \right) \right] \phi_0 \left[ H_p^{-1}
\right]^{|i-1|} dp, \quad i = 1,2.
\]
As above, one derives the following estimate:
\[ \| g_0^{(1)} \|^2_{\theta, q_0} + \| g_0^{(2)} \|^2_{\theta, q_0}
\leq \epsilon \, C'(\omega_0) (I^2 + I_0^2) .
\]
Therefore, if $\epsilon \leq [C_1 C']^{-1}/2$, we see that $\| I_0 \|_{\theta,
q_0}^2 \leq 2 C_1 (\omega_0) \| \xi_0 \|_{\theta, q_0}^2 +  I^2$. Combining this
inequality and \eqref{IbyI0}, we complete the proof by taking $\epsilon$ less than
or equal to $\epsilon_0 = \min \{ [C_1 C']^{-1}/2, C^{-1}/2 \}$.
\end{proof}

In the next lemma, we estimate the $\theta, q_0$-norm of $\xi_0$ by the Cauchy
data of this function at $q_0$.

\begin{lemma} \label{CauchyLemma1} 
Let $r$ satisfy \eqref{BdR}. Then there exist $\theta_0 > 0$ (that from Lemma
\ref{StabLemma} can be used) and $\epsilon_0 > 0$ (both depend only on $\omega_0$)
such that the inequality
\begin{equation} \label{eq:stab1}
\| \xi_0 \|_{\theta, q_0}^2 \leq C \left[ \xi_0 ^2 (q_0) + \xi_0'^2 (q_0) \right]
\end{equation}
holds for all $\theta \in (0, \theta_0]$ and $q_0 \in \Bbb R$ provided
\[ \max_{i=1,2} \left\{ \| w^{(i)} \|_{C^2(\overline{S})} + \| f^{(i)}
\|_{C^1(\overline{S})} \right\} \leq \epsilon_0 .
\]
The positive constant $C$ depends only on $\theta$ and $\omega_0$. 
\end{lemma}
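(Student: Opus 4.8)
The plan is to work with the system \eqref{eq:w01}--\eqref{eq:f01} satisfied by the central-mode components $(\xi_0,\zeta_0)$ of the difference of the two solutions, whose forcing terms $g_0^{(1)},g_0^{(2)}$ are the $\phi_0$-projections of ${\cal N}_i(w^{(1)},f^{(1)})-{\cal N}_i(w^{(2)},f^{(2)})$. Note first that $\xi_0,\zeta_0$ are bounded on $\Bbb R$, since $w^{(i)},f^{(i)}$ are bounded on $\bar S$ by Propositions \ref{PropBoundsPsi} and \ref{RegProp}; hence every exponentially weighted integral below converges and the boundary contributions at $\pm\infty$ vanish. The first step would be an a priori bound for the forcing: because the operators ${\cal N}_i$ are quadratic, the bilinear identity exploited in the proof of Lemma \ref{StabLemma} gives, under the smallness hypothesis, the pointwise bound $|g_0^{(i)}(q)|\le\epsilon\,C(\omega_0)\bigl(I(q)+I_0(q)\bigr)$ with $I(q)^2=\int_0^1(\widetilde\xi^2+\widetilde\xi_p^2+\widetilde\zeta^2)\,\D p$ and $I_0(q)^2=\xi_0^2(q)+\zeta_0^2(q)$. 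Multiplying by $\E^{-\theta|q-q_0|}$, integrating, using Lemma \ref{StabLemma} to control $\|I\|_{\theta,q_0}^2$ by $\epsilon\,C(\omega_0)\|\xi_0\|_{\theta,q_0}^2$ and \eqref{eq:Struct2} to control $\|\zeta_0\|_{\theta,q_0}^2$, and finally shrinking $\epsilon_0$ (depending only on $\omega_0$) to absorb the $g$-terms, one obtains
\[ \|g_0^{(1)}\|_{\theta,q_0}^2+\|g_0^{(2)}\|_{\theta,q_0}^2\le\epsilon\,C(\omega_0)\,\|\xi_0\|_{\theta,q_0}^2 . \]

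The heart of the matter is two weighted energy identities for \eqref{eq:w01}--\eqref{eq:f01}. Multiplying \eqref{eq:w01} by $\zeta_0$ and \eqref{eq:f01} by $|\mu_0|\xi_0$ and adding gives $\bigl(\frac{1}{2}\zeta_0^2+\frac{|\mu_0|}{2}\xi_0^2\bigr)'=\zeta_0g_0^{(2)}+|\mu_0|\xi_0g_0^{(1)}$; here $\mu_0<0$ by assertion (i) of Proposition \ref{PropK1}, so $|\mu_0|=-\mu_0$ and the quadratic form is nonnegative. Multiplying by $\E^{-\theta|q-q_0|}$ and integrating over $(q_0,+\infty)$ and $(-\infty,q_0)$ separately, and using $|\mu_0|\le C(\omega_0)$ (again by Proposition \ref{PropK1}), yields
\[ \|\zeta_0\|_{\theta,q_0}^2+|\mu_0|\,\|\xi_0\|_{\theta,q_0}^2\le C(\omega_0,\theta)\Bigl[\xi_0^2(q_0)+\zeta_0^2(q_0)+\|g_0^{(1)}\|_{\theta,q_0}^2+\|g_0^{(2)}\|_{\theta,q_0}^2\Bigr]. \]
Separately, multiplying \eqref{eq:w01} by $\xi_0\E^{-\theta|q-q_0|}$ and integrating gives the complementary estimate $\|\xi_0\|_{\theta,q_0}^2\le C(\theta)\bigl[\xi_0^2(q_0)+\|\zeta_0\|_{\theta,q_0}^2+\|g_0^{(1)}\|_{\theta,q_0}^2\bigr]$. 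Combining the two displays with the forcing bound of the first step and absorbing the resulting $\epsilon\,C(\omega_0,\theta)\|\xi_0\|_{\theta,q_0}^2$ for $\epsilon$ small, I would arrive at $\|\xi_0\|_{\theta,q_0}^2\le C(\omega_0,\theta)\bigl[\xi_0^2(q_0)+\zeta_0^2(q_0)\bigr]$.

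It then remains to replace $\zeta_0(q_0)$ by $\xi_0'(q_0)$. By \eqref{eq:w01}, $\zeta_0(q_0)=\xi_0'(q_0)-g_0^{(1)}(q_0)$, so it suffices to absorb $|g_0^{(1)}(q_0)|^2$; using the pointwise bound for $g_0^{(1)}$ from the first step together with a standard local estimate for the system \eqref{eq:wtilde}--\eqref{eq:ftilde} on the slab $(q_0-1,q_0+1)\times(0,1)$ and Lemma \ref{StabLemma} once more, one gets $|g_0^{(1)}(q_0)|^2\le\epsilon^2\,C(\omega_0,\theta)\|\xi_0\|_{\theta,q_0}^2$, which is again absorbed. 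Taking $\theta_0$ to be the constant of Lemma \ref{StuctEst} (which legitimizes the use of Lemmas \ref{StabLemma} and \ref{fmu}) and $\epsilon_0$ the minimum of all the smallness thresholds invoked, both depending only on $\omega_0$, completes the proof.

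The step I expect to be the main obstacle is the degeneracy $|\mu_0|\to0$ as $r\to r_c$: the system \eqref{eq:w01}--\eqref{eq:f01} is then a slowly oscillating, non-dissipative pair, its homogeneous solutions do not decay, and a careless energy estimate controls only $|\mu_0|\,\|\xi_0\|_{\theta,q_0}^2$ rather than $\|\xi_0\|_{\theta,q_0}^2$. The point that makes everything work is that the Hamilton-type identity produces the \emph{weight $|\mu_0|$} in front of $\xi_0^2(q_0)$ on the right-hand side (not $|\mu_0|^{-1}$), so that pairing it with the elementary first-order estimate for $\|\xi_0\|_{\theta,q_0}^2$ costs nothing and all constants remain uniform in $r$. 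A second, interlocking, difficulty is the apparent circularity between the nonlinear forcing $g_0^{(i)}$, the one-dimensional norm $\|\xi_0\|_{\theta,q_0}$, and $\|\zeta_0\|_{\theta,q_0}$; this is broken by invoking Lemmas \ref{StabLemma} and \ref{fmu} and the smallness $\epsilon\le\epsilon_0(\omega_0)$.
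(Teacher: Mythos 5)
Your argument reaches the right estimate, but by a genuinely different route from the paper's. The paper collapses \eqref{eq:w0}--\eqref{eq:f0} into the scalar second-order equation $\xi_0''+|\mu_0|\xi_0=g$ and writes $\xi_0$ explicitly by variation of parameters about $q_0$: the cosine/sine terms carry the Cauchy data and give the right-hand side of \eqref{eq:stab1} at once, the Duhamel term has zero data at $q_0$ and is handled by the second part of Lemma \ref{fmu} (inequality \eqref{eq:Struct3}), and $\|g\|_{\theta,q_0}$ is absorbed via Lemma \ref{StabLemma}. You instead stay with the first-order system \eqref{eq:w01}--\eqref{eq:f01}, keep its forcing $g_0^{(1)},g_0^{(2)}$ (so no $q$-differentiation of the projected nonlinearity is needed), and prove two weighted energy estimates in which the boundary terms at $q_0$ are retained --- in effect a Cauchy-data version of Lemma \ref{fmu} --- together with the forcing bound $\|g_0^{(1)}\|^2_{\theta,q_0}+\|g_0^{(2)}\|^2_{\theta,q_0}\le\epsilon\,C(\omega_0)\|\xi_0\|^2_{\theta,q_0}$, which is exactly what is derived inside the proof of Lemma \ref{StabLemma}. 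Your key observation is also the right one: the Hamiltonian identity puts the weight $|\mu_0|$ in front of $\xi_0^2(q_0)$ and controls $\|\zeta_0\|_{\theta,q_0}$ non-degenerately, and $\|\xi_0\|_{\theta,q_0}$ is then recovered from the first equation, so the constants stay uniform as $r\to r_c$; this is the same mechanism that makes the paper's use of \eqref{eq:Struct3} work.

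What your route costs, and the paper's avoids, is the last step, the conversion of $\zeta_0(q_0)$ into $\xi_0'(q_0)$. Since $\zeta_0(q_0)=\xi_0'(q_0)-g_0^{(1)}(q_0)$, you must bound the pointwise value $g_0^{(1)}(q_0)$, which involves the traces of $\widetilde\xi_p$, $\widetilde\zeta$ (and of $\xi_0,\zeta_0$) on the line $q=q_0$. These traces are not controlled by the purely weighted-$L^2$ information in Lemmas \ref{StuctEst}--\ref{fmu} and \ref{StabLemma}, and the crude bound $|g_0^{(1)}(q_0)|\le C\epsilon^2$ is useless because it is not homogeneous in the difference of the two solutions. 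So the ``standard local estimate'' you invoke is really the ADN slab machinery that the paper only brings in for Lemma \ref{CauchyLemma2} (cf. \eqref{eq:ADN_W}, \eqref{pf:T2:3}, \eqref{pf:T2:4}): one needs the weighted integration over slabs (or an iteration) to convert the local bound, whose right-hand side contains $\epsilon\|\xi\|_{C^{1,\alpha}(2D_{q_0})}+\|\xi\|_{L^2(2D_{q_0})}$, into a bound by $C(\omega_0,\theta)\|\xi_0\|_{\theta,q_0}$, and the resulting estimate for $g_0^{(1)}(q_0)$ also reintroduces $\xi_0^2(q_0)$ and $\zeta_0^2(q_0)$, which must then be self-absorbed using the $\epsilon^2$ prefactor. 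All of this can be carried out with tools the paper itself uses in the very next lemma (and without circularity, since you only need Lemma \ref{StabLemma}, not the present lemma), so I regard the step as completable rather than wrong; but as written it is the thin point, and it is precisely the complication the paper's explicit representation sidesteps, since there only $\xi_0(q_0)$ and $\xi_0'(q_0)$ ever appear. Two minor remarks: in the energy identity you swapped which equation is multiplied by $\zeta_0$ and which by $|\mu_0|\xi_0$ (the identity you state is nevertheless correct); and your absorption thresholds for $\epsilon$ depend on $\theta$ through the constants of the energy estimates, but the paper's own absorption in \eqref{gest} has the same feature, so this is not a defect relative to the paper.
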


\begin{proof} 
It follows from \eqref{eq:w0} and \eqref{eq:f0} that
\begin{equation} \label{eq:w02}
\xi_0'' + |\mu_0| \xi_0 = g , \quad q \in \Bbb R ,
\end{equation}
where
\begin{eqnarray*}
&& g = \int_0^1 \left[ {\cal N}_2 \left( w^{(1)}, f^{(1)} \right) - {\cal N}_2
\left( w^{(2)}, f^{(2)} \right) \right] \phi_0 \, \D p \\ && \ \ \ \ - \frac
{\partial}{\partial q} \int_0^1 \left[ {\cal N}_1 \left( w^{(1)}, f^{(1)} \right) -
{\cal N}_1 \left( w^{(2)}, f^{(2)} \right) \right] \phi_0 H_p^{-1} \, \D p .
\end{eqnarray*}
Then the solution $\xi_0$ of \eqref{eq:w02} has the following form:
\begin{eqnarray*}
&& \xi_0 (q) = \xi_0(q_0) \cos \sqrt{|\mu_0|} (q-q_0)  + \frac {\xi_0' (q_0)} {
\sqrt{|\mu_0|} } \sin \sqrt{|\mu_0|} (q-q_0) \nonumber \\ && \ \ \ \ \ \ \ + \frac
1 { |\sqrt{ \mu_0 }| } \int_{ q_0 }^{ q } g (q') \sin \sqrt { |\mu_0| } (q - q') \,
\D q' , \quad q_0 \in \Bbb R .
\end{eqnarray*}
In order to prove \eqref{eq:stab1} it is sufficient to estimate the $\theta,
q_0$-norm of the right-hand side in the last equality. The norm of the first two
terms is bounded by $C (\theta, \omega_0) \sqrt{\xi_0 ^2 (q_0) + \xi_0'^2 (q_0)}$.
Turning to the last term, we see that it is sufficient to prove the inequality
\begin{equation} \label{gest}
\frac 1 { |\sqrt{ \mu_0 }| } \left\| \int_{ q_0 }^{ q } g (q') \sin \sqrt {|\mu_0|}
(q - q') \, \D q' \right\|_{\theta, q_0} \leq \frac 1 2 \| \xi_0 \|_{\theta, q_0}.
\end{equation}
Let us apply Lemma \ref{fmu} (inequality \eqref{eq:Struct3}) to the function
\[ \frac 1 { |\sqrt{ \mu_0 }| } \int_{ q_0 }^{ q } g (q') \sin \sqrt { |\mu_0| } 
(q - q') \, \D q' .
\]
It is clear that this function satisfies the same equation \eqref{eq:w02} as
$\xi_0$, but with the homogeneous initial conditions at $q_0$. Therefore, Lemma
\ref{fmu} applied to this function and its derivative instead of $\xi_0$ and
$\zeta_0$, respectively, gives
\[ \frac 1 {\mu_0} \left\| \int_{ q_0 }^{ q } g (q') \sin \sqrt{|\mu_0|} (q - q') \, 
\D q' \right\|_{\theta, q_0}^2 \leq C (\omega_0, \theta) \| g \|^2_{\theta, q_0} .
\]
The last inequality is valid for all $\theta > 0$. Furthermore, according to the
definition of $g$, Lemma \ref{StabLemma} shows that the inequality
\[ \| g \|_{\theta, q_0}^2 \leq \epsilon \, C' (\omega_0, \theta) \| \xi_0 
\|_{\theta, q_0}^2
\]
holds for all $\theta \in (0, \theta_0]$ provided $\epsilon \in (0, \epsilon'_0]$
(here $\theta_0$ and $\epsilon'_0$ are the constants from Lemma \ref{StabLemma}),
for which purpose we take
\[ \epsilon = \max_{i=1,2} \left\{ \|w^{(i)}\|_{C^2(\overline{S})} +
\|f^{(i)}\|_{C^1(\overline{S})} \right\} .
\]
Finally, combining the last two inequalities, we arrive at \eqref{gest}, provided
\[ \epsilon \leq \epsilon_0 = \min \{ \epsilon_0', [C C']^{-1}/2 \} ,
\]
and this completes the proof.
\end{proof}

Now we estimate the Cauchy data of $\xi_0$ by the Cauchy data of $\xi$ at the same
point.

\begin{lemma} \label{CauchyLemma2} 
Let $r$ satisfy \eqref{BdR}. Then there exist positive constants $C$ and $\epsilon_0$ depending only on $\omega_0$
such that if $\max_{i=1,2} \left\{ \| w^{(i)} \|_{C^2(\overline{S})} + \| f^{(i)}
\|_{C^1(\overline{S})} \right\} \leq \epsilon_0$, then the inequality
\[ \xi_0 ^2 (q_0) + \xi_0'^2 (q_0) \leq C (\omega_0) 
\left[ \xi^2 (q_0,1) + \xi^2_q (q_0,1) \right]
\]
holds for all $q_0 \in \Bbb R$.
\end{lemma}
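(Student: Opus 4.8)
The plan is to climb back up the chain of reductions of Section 6.2. Since $w^{(i)}(q,1)=\eta^{(i)}(q)-d_+(r)$, one has $\xi(q,1)=\eta^{(1)}(q)-\eta^{(2)}(q)$ and $\xi_q(q,1)=\eta^{(1)}_q(q)-\eta^{(2)}_q(q)$, so the right-hand side of the asserted inequality is exactly $\xi^2(q_0,1)+\xi_q^2(q_0,1)$, and once the lemma is in hand it closes the proof of Theorem~\ref{StabThm2'} together with Lemmas~\ref{StabLemma} and~\ref{CauchyLemma1} and the spectral decomposition of $\|\nabla h^{(1)}-\nabla h^{(2)}\|_{\theta,q_0}$. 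First I would evaluate the splitting $\xi=\xi_0\phi_0+\widetilde\xi$ and its $q$-derivative at $p=1$:
\[ \xi(q_0,1)=\phi_0(1)\,\xi_0(q_0)+\widetilde\xi(q_0,1),\qquad \xi_q(q_0,1)=\phi_0(1)\,\xi_0'(q_0)+\widetilde\xi_q(q_0,1). \]
By assertion (iii) of Proposition~\ref{PropK1} the number $|\phi_0(1)|$ is bounded below by a positive constant depending only on $\omega_0$, so inverting this $2\times2$ system gives
\[ \xi_0^2(q_0)+\xi_0'^2(q_0)\le C(\omega_0)\big[\xi^2(q_0,1)+\xi_q^2(q_0,1)+\widetilde\xi^2(q_0,1)+\widetilde\xi_q^2(q_0,1)\big]. \]
Hence the lemma is reduced to the bound $\widetilde\xi^2(q_0,1)+\widetilde\xi_q^2(q_0,1)\le \epsilon\,C(\omega_0)(\xi_0^2(q_0)+\xi_0'^2(q_0))$, where $\epsilon=\max_i(\|w^{(i)}\|_{C^2(\bar S)}+\|f^{(i)}\|_{C^1(\bar S)})$ is as small as we like by Lemma~\ref{lemma_small}; for $\epsilon$ small this term is then absorbed into the left-hand side above.

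The core of the proof is this last estimate, and it must be \emph{local in $q$}. Eliminating $\widetilde\zeta$ from the system \eqref{eq:wtilde}, \eqref{eq:ftilde} (using ${\cal L}(\widetilde\xi)=-(\widetilde\xi_p/H_p^3)_p$) turns the two first-order equations into one uniformly elliptic equation for $\widetilde\xi$ on $S$,
\[ \frac{\widetilde\xi_{qq}}{H_p}+\Big(\frac{\widetilde\xi_p}{H_p^3}\Big)_p=F \ \ \text{in } S,\qquad \widetilde\xi(q,0)=0,\qquad \frac{\widetilde\xi_p(q,1)}{H_p^3(1)}-\widetilde\xi(q,1)=b(q), \]
whose ellipticity constants depend only on $\omega_0$ by the bounds $\frak m\le H_p\le\frak M$ and Proposition~\ref{PropK1}. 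Here $\widetilde\xi(q,0)=0$ because $\xi(q,0)=0$ and $\phi_0(0)=0$; the boundary defect $b$ is obtained by subtracting the homogeneous Robin condition \eqref{SL2} satisfied by $\phi_0$ from the difference of the exact boundary conditions \eqref{eq:hdf3} for $h^{(1)}$ and $h^{(2)}$; and $F$ collects the $(I-{\cal P}_i)$-projections of the differences ${\cal N}_i(w^{(1)},f^{(1)})-{\cal N}_i(w^{(2)},f^{(2)})$ together with one $q$-derivative of them. Since $\|w^{(i)}\|_{C^2(\bar S)}+\|f^{(i)}\|_{C^1(\bar S)}<\epsilon$, both $F$ and $b$ carry a factor $\epsilon$ and are bounded by $\epsilon\,C(\omega_0)$ times derivatives of $\xi$ of order at most two (order at most three after one further $q$-differentiation).

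I would then apply interior and up-to-the-boundary elliptic regularity on the window $(q_0-2,q_0+2)\times(0,1)$, bootstrapping once more to the $W^{3,p}$ level for a finite $p$ — which is available because $\omega$ is Lipschitz, so the coefficient $\omega(p)$ in \eqref{eq:hdf1} lies in $W^{1,\infty}$ — and then use the embedding $W^{3,p}\hookrightarrow C^1$ in two dimensions to bound both $\widetilde\xi(q_0,1)$ and $\widetilde\xi_q(q_0,1)$ by $C\|\widetilde\xi\|_{W^{3,p}((q_0-1,q_0+1)\times(0,1))}$. Writing $\xi=\xi_0\phi_0+\widetilde\xi$ in $F$ and $b$, the $\widetilde\xi$-part (which carries $\epsilon$) is absorbed into the left-hand side of the regularity estimate; the remaining $\widetilde\xi$-norms on the right are $\le\epsilon\,C(\omega_0)(\xi_0^2(q_0)+\xi_0'^2(q_0))$ on combining Lemma~\ref{StabLemma} with Lemma~\ref{CauchyLemma1} (the $\theta,q_0$-norm dominates any fixed window); and the $\xi_0\phi_0$-part is controlled by $\sup_{|q-q_0|\le 2}(\xi_0^2(q)+\xi_0'^2(q))$, which is itself $\le C(\omega_0)(\xi_0^2(q_0)+\xi_0'^2(q_0))$ because $\xi_0$ solves $\xi_0''+|\mu_0|\xi_0=g$ with $|\mu_0|$ small (Proposition~\ref{PropK1}(i)) and $g$ small, so the $\cos/\sin$ representation of $\xi_0$ from the proof of Lemma~\ref{CauchyLemma1} varies only by a bounded factor over an $O(1)$ interval. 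Choosing $\epsilon_0$ small (depending only on $\omega_0$) then closes the argument.

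The point I expect to be delicate — and the reason a soft argument fails — is precisely this passage from the \emph{integral} $\|\cdot\|_{\theta,q_0}$-smallness produced by Lemmas~\ref{StabLemma} and~\ref{CauchyLemma1} to a \emph{pointwise} bound at the corner $(q_0,1)$: an interpolation of the form $\|\widetilde\xi\|_{C^1}\le C\|\widetilde\xi\|_{L^2}^{1-\beta}\|\widetilde\xi\|_{C^2}^\beta$ would only yield the power $(\xi_0^2(q_0)+\xi_0'^2(q_0))^{1-\beta}$, which is useless when the Cauchy data is small. What rescues the argument is that $\widetilde\xi$ is not a generic small function but the ${\cal Q}$-component of an almost-solution of a uniformly elliptic boundary value problem whose forcing and boundary defect are themselves $O(\epsilon)$, so the elliptic estimate transports the small $L^2$-control to small $C^1$-control without losing a power. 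Throughout one must keep every constant — ellipticity, the bound on the projectors ${\cal P}_i$, elliptic regularity, trace and Sobolev embedding — dependent on $\omega_0$ only, which is exactly what Proposition~\ref{PropK1} together with the normalisation \eqref{BdR} secures.
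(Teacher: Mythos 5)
Your argument is essentially the paper's own proof: the same inversion of the trace of the splitting at $p=1$ via the lower bound on $\phi_0(1)$ (the identities \eqref{eq:W0rep}), the same second-order elliptic reformulation of \eqref{eq:weak1}--\eqref{eq:weak2} for the $\cal Q$-component (the paper's problems \eqref{*} and \eqref{**}), and the same mechanism of local elliptic estimates converting the weighted-integral smallness supplied by Lemmas \ref{StabLemma} and \ref{CauchyLemma1} into pointwise $C^1$ control of $\widetilde{\xi}$ at $(q_0,1)$, followed by absorption of the $\epsilon$-terms; the paper implements the elliptic step with the $C^{1,\alpha}$ estimates of Theorem 9.3 of \cite{ADN} on the translated rectangles $D_t$ and handles the larger-window absorption by integrating against $\E^{-\theta|t-q_0|}$ over all translates (inequalities \eqref{pf:T2:2}--\eqref{pf:T2:4}). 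One small caution: your $W^{3,p}$ bootstrap asks for more regularity than the solutions possess (only $C^{2,\alpha}$ bounds for $h^{(i)}$ are available via Proposition \ref{RegProp}), but it is also unnecessary, since $W^{2,p}$ with $p>2$ (or the Schauder estimate used in the paper) already yields the needed $C^1$ trace bound at $(q_0,1)$.
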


\begin{proof} 
It follows from the spectral splitting that
\begin{equation} \label{eq:W0rep}
\xi_0 (q_0) = \phi_0^{-1} (1) [ \xi (q_0,1) - \widetilde{\xi} (q_0,1) ] , \ 
\xi'_0 (q_0) = \phi_0^{-1} (1) [ \xi_q (q_0,1) - \widetilde{\xi}_q (q_0,1) ] .
\end{equation}
In order to estimate $\widetilde{\xi}(q_0,1)$ and $\widetilde{\xi}_q(q_0,1)$, we
write the system \eqref{eq:weak1}, \eqref{eq:weak2} in the following form:
\begin{eqnarray*}
&& \left[ \frac{w_p}{H_p^3} \right]_p + \left[ \frac{w_q}{H_p} \right]_q = [{\cal
N}_1^* (w, f)]_p + [{\cal N}_2^* (f)]_q , \quad (q, p) \in S , \\ &&
\frac{w_p}{H_p^3} - w = {\cal N}_1^* (w, f) \ \ \mbox{when} \ p = 1 , \quad w = 0 \
\ \mbox{when} \ p = 0 .
\end{eqnarray*}
Here
\[ {\cal N}_1^* (w, f) = \frac{w_p^3}{H_p^3 h_p^2} + \frac 3 2\frac{w_p^2}{H_p^2 h_p^2} 
+ \frac{f^2}{2} \ \ \mbox{and} \ \ {\cal N}_2^* (f) = \frac{f w_p}{H_p} .
\]
Comparing this and the system \eqref{eq:w0}, \eqref{eq:f0}, we obtain that
$\widetilde{w} = (I - {\cal P}_1)(w)$ must satisfy the problem
\begin{equation*}
\begin{split}
\left[ \frac{\widetilde{w}_p}{H_p^3} \right]_p + \left[ 
\frac{\widetilde{w}_q}{H_p} \right]_q & = [{\cal N}_1^*]_p + [{\cal N}_2^*]_q 
+ \frac{\phi_0}{H_p} \int_0^1 \left( {\cal N}_1^* \phi_0' - [{\cal N}_2^* \phi_0 ]_q \right) \, \D p  , \\
\frac{\widetilde{w}_p}{H_p^3} - 
\widetilde{w} & = {\cal N}_1^* \ \ \mbox{when} \ p = 1, \quad \widetilde{w} = 0 \ \ 
\mbox{when} \ p = 0 .
\end{split}
\end{equation*}

Let $\widetilde{w}^{(1)}$ and $\widetilde{w}^{(2)}$ correspond to $\eta^{(1)}$ and
$\eta^{(2)}$, respectively. Since both $\widetilde{w}^{(1)}$ and
$\widetilde{w}^{(2)}$ solve the last problem, but with different right-hand side
terms, their difference, which we also denote by $\widetilde{w}$, must satisfy the
following problem:
\begin{equation} \label{*}
\begin{split}
\left[ \frac{\widetilde{\xi}_p}{H_p^3} \right]_p + \left[
\frac{\widetilde{\xi}_q}{H_p} \right]_q & = [{\cal J}_1^*]_p + [{\cal J}_2^*]_q +
\frac{\phi_0}{H_p} \int_0^1 \left( {\cal J}_1^* \phi_0' - [{\cal J}_2^* \phi_0]_q
\right) \, \D p \\ \frac{\widetilde{\xi}_p}{H_p^3} - \widetilde{\xi} & = {\cal
J}_1^* \ \ \mbox{when} \ p = 1,  \quad \widetilde{\xi} = 0 \ \ \mbox{when} \ p = 0 .
\end{split}
\end{equation}
Here
\[ {\cal J}_1^* = {\cal N}_1^* \left( w^{(1)}, f^{(1)} \right) - {\cal N}_1^* 
\left( w^{(2)}, f^{(2)} \right) , \ \ {\cal J}_2^* = {\cal N}_2^* 
\left( w^{(1)}, f^{(1)} \right) - {\cal N}_2^* \left( w^{(2)}, f^{(2)} \right) .
\]
Similarly, the function $\xi$ must satisfy
\begin{equation} \label{**}
\begin{split}
& \left[ \frac{\xi_p}{H_p^3} \right]_p + \left[ \frac{\xi_q}{H_p} \right]_q  =
[{\cal J}_1^*]_p + [{\cal J}_2^*]_q, \\ & \frac{\xi_p}{H_p^3} - \xi = {\cal J}_1^* \
\ \mbox{when} \ p  = 1, \ \ \ \xi = 0 \ \ \mbox{when} \ p = 0.
\end{split}
\end{equation}

Let $\theta = \theta_0' = \theta_0''$ and $\epsilon_1 = \min \{
\epsilon_0',\epsilon_0'' \}$, where $\theta_0'$ $(\theta_0'')$ and $\epsilon_0'$
$(\epsilon_0'')$ are $\theta_0$ and $\epsilon_0$, respectively, that exist according
to Lemma \ref{StabLemma} (\ref{CauchyLemma1}, respectively). All of them depend
only on $\omega_0$. Without loss of generality, we assume that
\[ \epsilon = \max_{i=1,2} \left\{ \|w^{(i)}\|_{C^2(\overline{S})} +
\|f^{(i)}\|_{C^1(\overline{S})} \right\} \leq \min \{ 1,\epsilon_1 \} .
\]
First, let us show that
\begin{equation} \label{pf:T2:3}
\int_{\Bbb R} \|\xi\|^2_{C^{1,\alpha}(D_t)} \E^{-\theta |t - q_0|} \, \D t \leq C
(\omega_0) \int_{\Bbb R} \|\xi\|^2_{L^2 (2 D_t) } \E^{-\theta |t - q_0|} \, \D t ,
\end{equation}
where $D_t = [t-1, t+1] \times [0,1]$, $2 D_t = [t-2, t + 2] \times [0,1]$ and
$\alpha = 1/2$ (the latter can be any number between 0 and 1). Let us apply Theorem
9.3, \cite{ADN}, to the systems \eqref{*} and \eqref{**}. In the case of \eqref{**},
this theorem gives
\[ \|\xi\|_{C^{1,\alpha} (D_t)} \leq C (\omega_0) \left[ \| {\cal J}_1 \|_{C^{\alpha}
(2 D_{t})} + \| {\cal J}_2 \|_{C^{\alpha} (2 D_{t})} + \| \xi \|_{L^2 (2 D_t) }
\right] .
\]
It follows from the definition of ${\cal J}_{i}^*$, $i=1,2$, that
\begin{equation} \label{eq:ADN_W}
\| \xi \|_{C^{1,\alpha} (D_t)} \leq C (\omega_0) \left[ \epsilon \, \| \xi
\|_{C^{1,\alpha} (2 D_{t})} + \| \xi \|_{L^2 (2 D_t) } \right] .
\end{equation}

After squaring \eqref{eq:ADN_W} and multiplying the result by $\E^{-\theta
|t-q_0|}$, we integrate over $\Bbb R$, thus obtaining
\begin{eqnarray}
\int_{\Bbb R} \| \xi \|^2_{C^{1, \alpha} (D_t)} \E^{-\theta |t - q_0|} \, \D t
\leq \epsilon \, C (\omega_0) \int_{\Bbb R} \| \xi \|^2_{C^{1, \alpha} (2 D_t)}
\E^{-\theta |t - q_0|} \, \D t \nonumber \\ + \, C (\omega_0) \int_{\Bbb R} \| \xi
\|^2_{L^2 (2 D_t)} \E^{-\theta |t - q_0|} \, \D t = I_1 + I_2 .
\label{pf:T2:1}
\end{eqnarray}
Furthermore, we have that
\begin{equation} \label{pf:T2:2}
I_1 \leq C' (\omega_0) \epsilon \int_{\Bbb R} \| \xi \|^2_{C^{1,\alpha} (D_t)}
\E^{-\theta |t - q_0|} \, \D t ,
\end{equation}
which is a consequence of the inequality
\[ \| \xi \|^2_{C^{1, \alpha} (2 D_t)} \leq 2 \left[ \| \xi \|^2_{C^{1, \alpha}
(D_{t-1})} + \| \xi \|^2_{C^{1, \alpha} (D_{t})} + \| \xi \|^2_{C^{1, \alpha}
(D_{t+1})} \right] .
\]
Let $\epsilon < \epsilon_2 = [C']^{-1}/2$, where $C'$ is the constant in
\eqref{pf:T2:2}. Then \eqref{pf:T2:1} and \eqref{pf:T2:2} imply \eqref{pf:T2:3}.

On the other hand, applying Theorem 9.3, \cite{ADN}, to \eqref{*} and using the
definition of ${\cal J}_i^*$, $i=1,2$, we obtain
\begin{equation} \label{pf:T2:4}
\| \widetilde{\xi} \|_{C^{1, \alpha} (D_t)} \leq C (\omega_0) \left[ \epsilon \| \xi
\|_{C^{1, \alpha} (2 D_{t})} + \| \widetilde{\xi} \|_{L^2 (2 D_t) } \right] .
\end{equation}
Again we square this, multiply by $\E^{-\theta |t-q_0|}$ and integrate the result
over $\Bbb R$, thus obtaining
\begin{eqnarray*}
\int_{\Bbb R} \| \widetilde{\xi} \|^2_{C^{1, \alpha} (D_t)} \E^{-\theta |t - q_0|}
\, \D t \leq \epsilon \, C (\omega_0) \int_{\Bbb R} \| \xi \|^2_{C^{1, \alpha} (2
D_t) } \E^{-\theta |t - q_0|} \, \D t \\ + C (\omega_0) \int_{\Bbb R} \|
\widetilde{\xi} \|^2_{L^2 (2 D_t)} \E^{-\theta |t - q_0|} \, \D t .
\end{eqnarray*}
It follows from \eqref{pf:T2:2} and \eqref{pf:T2:3} that the right-hand side is less
than or equal to
\[ C (\omega_0) \epsilon \int_{\Bbb R} \| \xi \|^2_{L^2 (2 D_t)} 
\E^{-\theta |t - q_0|} \, \D t + C (\omega_0) \int_{\Bbb R} \| \widetilde{\xi}
\|^2_{L^2 (2 D_t)} \E^{-\theta |t - q_0|} \, \D t .
\]
Changing the order of integration, we apply Lemmas \ref{StabLemma} and
\ref{CauchyLemma1}, thus estimating these integrals as follows:
\[ \int_{\Bbb R} \| \widetilde{\xi} \|^2_{C^{1, \alpha} (D_t)} \E^{-\theta |t - q_0|}
\, \D t \leq \epsilon C'' (\omega_0) \left[ \xi_0^2 (q_0) + \xi_0'^2 (q_0) \right] .
\]
Combining this inequality and \eqref{eq:W0rep}, we obtain
\[ \xi_0 ^2 (q_0) + \xi_0'^2 (q_0) \leq C (\omega_0) [\xi(q_0,1)^2 + \xi'(q_0,1)^2] 
+ \epsilon C''' (\omega_0) [\xi_0 ^2 (q_0) + \xi_0'^2 (q_0)] .
\]
If $\epsilon < \epsilon_0 = \min \{ \epsilon_1, \epsilon_2, [2 C'''
(\omega_1)]^{-1/\gamma})$, then we get the required inequality.
\end{proof}

\subsubsection{Proof of Theorem \ref{StabThm2'}}

Now we are in a position to complete the proof of Theorem \ref{StabThm2'}. Let us
denote by $\epsilon_0'$, $\epsilon_0''$ and $\epsilon_0'''$ the constant
$\epsilon_0$ existing according to Lemma \ref{StabLemma}, \ref{CauchyLemma1} and
\ref{CauchyLemma2}, respectively. Let $\epsilon_0 = \min \{ \epsilon_0',
\epsilon_0'', \epsilon_0''' \}$, whereas $\theta_0$ is the constant existing by
Lemmas \ref{StabLemma} and \ref{CauchyLemma1}; both $\epsilon_0$ and $\theta_0$
depend only on $\omega_0$. Then assertion (a) of Theorem \ref{propSmall} and Lemma
\ref{lemma_small} allow us to find $r''$ depending on $M$ and $\omega_1$ so that the
inequality
\[ \max_{i=1,2} \left\{ \| w^{(i)} \|_{C^2(\overline{S})} + \| f^{(i)}
\|_{C^1(\overline{S})} \right\} \leq \epsilon_0
\]
holds for every solution of problem ${\rm P_r^M}$ provided $r \in (r_c, r'']$, where
$r''$ exists by Theorem \ref{StabThm2'}. Then we apply Lemmas \ref{StabLemma},
\ref{CauchyLemma1} and \ref{CauchyLemma2}, thus obtaining the inequality
\begin{eqnarray*}
&& \!\!\!\!\!\!\!\!\!\! \| w^{(1)}_p - w^{(2)}_p \|_{\theta, q_0} + \| f^{(1)} -
f^{(2)} \|_{\theta, q_0} \\ && \leq C (M,\omega_1) \left[ |w^{(1)} (q_0,1) - w^{(2)}
(q_0,1)|^2 + |w^{(1)}_q (q_0,1) - w^{(2)}_q(q_0,1)|^2 \right] ,
\end{eqnarray*}
from which \eqref{eq:stab'} follows. Thus, the proof of Theorem \ref{StabThm2'} is
complete.

\section{Proof of Theorem \ref{UniqThm} and Verification \\
of the Benjamin--Lighthill Conjecture}

Theorem \ref{UniqThm} is a consequence of the following three facts. First, in view
of Lemma \ref{lemma_small} all solutions of the problem $\rm P_r^M$ are of small
amplitude when $r$ belongs to $(r_c, r_*]$, where $r_*$ depends only on $M$ and
$\omega_1$. Second, according to results obtained in \cite{GW} all small amplitude
waves are exhausted by a continuous branch of Stokes waves bifurcating from a
horizontal shear flow and terminating by the solitary wave of elevation. Third,
Theorem \ref{StabThm} implies that these solutions are uniquely parametrized by
their height at the crest provided the latter lies on the $y$-axis.

Now we turn to verification of the Benjamin--Lighthill conjecture for nearcritical
values of Bernoulli's constant. Namely, we suppose that $r \in (r_c, r_*]$, where
$r_* < \min \{r', r''\}$ and $r', r''  \in (r_c, r_0)$ are the values that exist
according to Theorems \ref{UniqThm} and \ref{StabThm}, respectively.

Theorem \ref{UniqThm} says that Stokes-wave solutions of problem $\rm P_r^M$ are
parameterized by their heights at the crest provided the latter is located on the
$y$-axis. Let $(\psi^{(t)}, \eta^{(t)})$ be such a solution for some $t \in (d_+(r),
\eta^{(s)} (0))$ (we recall that $\eta^{(s)} (0)$ is the height of the corresponding
solitary wave at its crest). Since the flow force does not depend on $x$, its value
for $(\psi^{(t)}, \eta^{(t)})$ is as follows:
\[ s(t) = \left[ r + \frac 2 3 \Omega(1) \right] t - \frac 1 3 \left\{ t^2 + 
\int_0^t \!\! \left[ [\psi^{(t)}_x]_{x=0}^2 - [\psi^{(t)}_y]_{x=0}^2 + 2 \, \Omega
(\psi^{(t)} (0, y)) \right] \D y \right\} .
\]
Let us show that {\it this function strictly decreases on} $(d_+(r), \eta^{(s)} (0))$. 

For this purpose we write $s$ in terms of the function $h(q, p; t)$ that
corresponds to $(\psi^{(t)}, \eta^{(t)})$ through the partial hodograph transform,
thus obtaining
\[ 3 s (t) = \left[ 3 r + 2 \, \Omega (1) \right] t - t^2 + \int_0^1 \left[ 
\frac{1}{h_p^2 (0, p; t)} - 2 \, \Omega (p) \right] h_p (0, p; t) \, \D p .
\]
Denoting differentiation with respect to $t$ by the top dot, we get from the pre\-vious
equality that
\[ 3 \dot{s} (t) = 3r + 2 \, \Omega(1) - 2 t - \int_0^1 \left[ \frac {\dot{h}_p}{h_p^2} 
+ 2 \, \Omega (p) \dot{h}_p \right]_{q=0} \D p .
\]
Integrating by parts, we obtain
\[ 3 \dot{s} (t) = 3r + 2 \, \Omega (1) - 2 t - \dot{h} (0,1; t) \left[ \frac 1 
{h_p^2 (0,1; t)} + 2 \, \Omega (p) \right] + 2 \int_0^1 \left[ \frac{h_{qq}}{h_p}
\dot{h} \right]_{q=0} \D p .
\]
Here the equation \eqref{eq:hdf1} is taken into account to simplify the integrand.
Since $h(0,1; t) = t$, the Bernoulli equation \eqref{eq:hdf3} gives that
\[ \dot{s} (t) = \frac 3 2 \int_0^1 \left[ \frac{h_{qq}}{h_p} \dot{h} \right]_{q=0} \D p .
\]
Indeed, the out of integral terms cancel because the point $(q, p) = (0, 1)$
corresponds to a wave crest.

To complete the proof of our assertion, let us show that $h_{qq} (0,p; t) < 0$ and
$\dot{h} (0,p; t) > 0$ for all $p \in (0,1)$ and $t \in (d_+(r), \eta^{(s)}(0)]$.

In order to prove the first of these inequalities, we denote by $2\Lambda$ the 
wavelength of the Stokes wave described by $h(q,p; t)$ with some $t \in (d_+(r), 
\eta^{(s)} (0))$. Then we have that
\[ h_q (0,p) = h_q (\Lambda, p) = 0 \ \ \mbox{for all} \ p \in [0,1] \quad 
\mbox{and} \quad h_q(q,0) = 0 \ \ \mbox{for all} \ q \in [0,\Lambda] .
\]
On the other hand, the inequality $h_q(q,1) < 0$ holds for all $q \in (0,\Lambda)$. 
These properties of $h_q$ allow us to apply the maximum principle to this function
in the rectangle $(0,\Lambda) \times (0,1)$, which yields the required inequality
for $h_{qq} (0,p; t)$. 

Instead of proving the inequality $\dot{h} (0,p; t) > 0$, let us show that $h (0,p; t)$
is an increasing function of $t$ on $(d_+(r), \eta^{(s)}(0)]$. Putting 
\[ \xi (q,p) = h (q,p; t_1) - h(q,p; t_2) , \quad t_1, t_2 \in (d_+(r), \eta^{(s)}(0)] ,
\]
we combine the inequality \eqref{pf:T2:3} and Lemmas \ref{StabLemma},
\ref{CauchyLemma1}, \ref{CauchyLemma2}, where $q_0 = 0$, which gives
\[ \| \xi \|_{C^{1,\alpha} ([-1,1] \times [0,1])} \leq C (\omega_0) |t_1 - t_2| .
\]
This and \eqref{pf:T2:4} imply that
\[ \| \widetilde{\xi} \|_{C^{1,\alpha} ([-1,1] \times [0,1])} \leq \epsilon
\, C (\omega_0) |t_1 - t_2| ,
\]
where
\[ \epsilon = \max_{i=1,2} \left\{ \|w^{(i)}\|_{C^2(\overline{S})} +
\|f^{(i)}\|_{C^1(\overline{S})} \right\} .
\] 
The last inequality yields that
\[ \Big| \widetilde{\xi} (0, p) - \frac{\widetilde{\xi} (0,1)}{\phi_0 (1)} \phi_0 
(p) \Big| \Big/ |t_2 - t_1| < \epsilon \, C (\omega_0).
\]
Finally, we can write
\[ \xi (0,p) = \xi_0 (0) \phi_0 (p) + \widetilde{\xi}(0,p) = \xi (0,1) 
\frac{\phi_0 (p)}{\phi_0 (1)} + \left[ \widetilde{\xi} (0,p) - \frac{\widetilde{\xi}
(0,1)}{\phi_0 (1)} \phi_0 (p) \right].
\]
Thus, if $\epsilon$ is small enough, then the obtained inequalities give that
\[ [h(0,p; t_1) - h (0,p; t_2)] / (t_1 - t_2) > \frac{\phi_0 (p)}{2\phi_0 (1)} 
\quad \mbox{for all} \ p \in (0,1) ,
\]
which completes the proof of our assertion and, consequently, verification of the
Benjamin--Lighthill conjecture. \\

\noindent {\bf Acknowledgements.} V.~K. and E.~L. were supported by the Swedish
Research Council (VR). N.~K. acknowledges the support from G.\,S.~Magnuson's
Foundation of the Royal Swedish Academy of Sciences and Link\"oping University.

\addcontentsline{toc}{section}{References}

\end{document}